\theoremstyle{plain}
\newtheorem{theorem}{Theorem}[section]
\newtheorem{lemma}[theorem]{Lemma}
\newtheorem{proposition}[theorem]{Proposition}
\newtheorem{remarkcnt}[theorem]{Remark}
\newtheorem{notation}[theorem]{Notation}
\newtheorem{fact}[theorem]{Fact}
\newtheorem*{claim}{Claim}
\newtheorem*{theorem*}{Theorem}
\newtheorem{question}[theorem]{Question}
\theoremstyle{definition}
\newtheorem{definition}[theorem]{Definition}
\newtheorem{example}[theorem]{Example}
\theoremstyle{remark}
\newcommand{\Z}{\mathbb{Z}}
\newcommand{\Q}{\mathbb{Q}}
\newcommand{\F}{\mathbb{F}}
\newcommand{\set}[1]{\left\{ {#1} \right\}}
\newcommand{\vect}[1]{\langle {#1} \rangle}
\newcommand{\abs}[1]{\lvert {#1} \rvert}
\newcommand{\ACF}{\mathrm{ACF}}
\newcommand{\tp}{\mathrm{tp}}
\newcommand{\M}{\mathbb{M}}
\newcommand{\Mat}{\mathrm{Mat}}
\newcommand{\fr}{\mathrm{Frac}}
\newcommand{\EC}{\mathcal{EC}}
\newcommand{\forkindep}[1][]{%
  \mathrel{
    \mathop{
      \vcenter{
        \hbox{\oalign{\noalign{\kern-.3ex}\hfil$\vert$\hfil\cr
              \noalign{\kern-.7ex}
              $\smile$\cr\noalign{\kern-.3ex}}}
      }
    }\displaylimits_{#1}
  }
}
\title{Existentially closed models of fields with a distinguished submodule}
\author{Christian d\textquoteright Elb\'ee}
\address{Mathematisches Institut der Universität Bonn, Endenicher Allee 60, room 4.004, 
53115 Bonn, Germany}
\email{cdelbee@math.uni-bonn.de}
\urladdr{\href{http://choum.net/\textasciitilde chris/page\textunderscore perso/}{http://choum.net/\textasciitilde chris/page\textunderscore perso/}}
\author{Itay Kaplan}
\address{Einstein Institute of Mathematics, Hebrew University of
Jerusalem, 91904, Jerusalem Israel.}
\email{kaplan@math.huji.ac.il}
\urladdr{\href{http://math.huji.ac.il/~kaplan}{math.huji.ac.il/~kaplan}}
\author{Leor Neuhauser}
\address{Einstein Institute of Mathematics, Hebrew University of
Jerusalem, 91904, Jerusalem Israel.}
\email{leor.neuhauser@math.huji.ac.il}
\thanks{
The authors would like to thank the Israel Science Foundation for their
support of this research (grant no. 1254/18). The first-named author was partially supported by the S.A Schonbrunn Fellowship. This paper was done as part of the third-named author's master thesis under the supervision of the first- and second-named authors.}
\subjclass[2010]{03C45, 03C10, 03C60}
\begin{document}

\begin{abstract}
    This paper deals with the class of existentially closed models of fields with a distinguished submodule (over a fixed subring). In the positive characteristic case, this class is elementary and was investigated by the first-named author. Here we study this class in Robinson's logic, meaning the category of existentially closed models with embeddings following Haykazyan and Kirby, and prove that in this context this class is NSOP$_1$ and TP$_2$.
\end{abstract}

\maketitle


\setcounter{tocdepth}{1}
\tableofcontents

\section{Introduction}
\iftoggle{THESIS}{
The existentially closed models of a theory are those that are existentially closed in every model extension.
Existentially closed models have a random, or generic, aspect to them by their definition --- every finite quantifier free structure that exists in some extension will also exist in the existentially closed model.
Finding first-order theories that axiomatize the class of existentially closed models is a strong tool in studying the generic models, and if the theory is inductive this will result in the model companion.

In \cite{d_Elb_e_2021_generic}, d'Elbée studied the theory of models with a generic substructure.
A particular example of interest to us is the theory of fields of positive characteristic with a distinguished vector subspace over a finite subfield, the class of existentially closed models of this theory is first-order axiomatizable, which gives rise to a model companion. 
The theory ACF$_p$G of algebraically closed fields of characteristic $p>0$ with a generic additive subgroup is a specific case of the above construction, as additive subgroups are vector subspaces over $\mathbb{F}_p$.
This theory was extensively studied in \cite{D_ELB_E_2021_acfg}.

Furthermore, \cite{d_Elb_e_2021_generic} defines \emph{weak-independence} and \emph{strong-independence}, and gives conditions for a model with a generic substructure to be NSOP$_1$, where weak-independence is Kim-independence (an introduction to those concepts can be found in the previous chapter).
The model companion of fields of positive characteristic with a vector subspace over a finite subfield satisfies those conditions, so it is NSOP$_1$.
It was also proved that this model companion is not simple (\cite[Proposition 5.20]{d_Elb_e_2021_generic}).

It is a natural to try and generalize this results to fields that are of characteristic $0$, or vector spaces that are over infinite subfields.
Another generalization is to consider modules over infinite subrings (a finite subring 
is a field).
In \cite{d_Elb_e_2021_generic}, d'Elbée showed that for fields of characteristic $0$ with an additive subgroup (which is a $\Z$-module), the class of existentially closed models is not first-order axiomatizable.

However, it is possible to study the existentially closed models of an inductive theory in a different logical setting, namely in Robinson's logic. In essence, it means that instead of studying models and elementary embeddings between them, we study existentially closed models and embeddings between them (see the introduction of \cite{Poizat2018}). Pillay \cite{Pillay2000}  refers to this setting as the \emph{Category of existentially closed models}. 
This approach was used by Haykazyan and Kirby \cite{Haykazyan_2021} in their study of exponential fields --- fields $F$ with a distinguished homomorphism $E:F^+\to F^\times$ from the additive group structure to the multiplicative group structure. 

(We note that there is a recent interest in positive model theory, which is a generalization of our setting (e.g. \cite{Haykazyan2019,hrushovski2020definability,dobrowolski2021kimindependence}).)

This chapter follows the steps of \cite{Haykazyan_2021}, considering the structure of fields with a submodule.
We will first give a description of the existentially closed fields with submodules (\cref{existentially closed}).
This description will not in general be first-order, except for the case of positive characteristic and submodules over a finite subring (see \Cref{rk_Rdefinable}).
We will then prove that the category of existentially closed models of this theory is NSOP$_1$ (see \cref{ec nsop1}) but not NTP$_2$ (and in particular, not simple; see \cref{ec tp2}); the appropriate definitions for these concepts in the category of existentially closed models appear in \cref{ec tree properties}, and are taken from \cite{Haykazyan_2021}.
The proof of NSOP$_1$ will use weak independence (mentioned above).
We will also study strong independence, which does not help with proving NSOP$_1$ but has interesting properties of its own, including $n$-amalgamation for every $n$ (see \cref{n-amalgamation}).
In the proofs we are using a definition of higher amalgamation that is slightly different from the one found in the literature (see \cite{Hrushovski98simplicityand,dePiro2006group,Goodrick2013amalgamation}). 
In the appendix we study this notion of amalgamation and its relation to independence.
}{
Suppose that $\mathcal{C}$ is a class of structures. A structure $M \in \mathcal{C}$ is existentially closed if, roughly speaking, any quantifier-free configuration that holds in an extension of $M$ from $\mathcal{C}$ is witnessed in $M$ (see \cref{existentially closed def} for the formal definition). 
Finding first-order axioms for the class of existentially closed structures in $\mathcal{C}$ could be extremely important in analyzing $\mathcal{C}$. For example, when we study the theory of fields, it is often convenient to work with algebraically closed fields, which are precisely the existentially closed fields. In an inductive theory $T$, if the class of existentially closed models is first-order axiomatizable, then the resulting theory is the model companion of $T$.

In \cite{d_Elb_e_2021_generic}, the first named author studied pairs $(M,N)$ where $N$ is a substructure of a reduct of $M$.
The existentially closed models of such pairs are called models with a generic substructure.
A particular example of interest to us is the theory of fields of positive characteristic with a distinguished vector subspace over a finite subfield. The class of existentially closed models of this theory is first-order axiomatizable, which gives rise to a model companion. 
The theory ACF$_p$G of algebraically closed fields of characteristic $p>0$ with a generic additive subgroup is a specific case of the above construction, as additive subgroups are vector subspaces over $\mathbb{F}_p$.
This theory was extensively studied in \cite{D_ELB_E_2021_acfg}.

Furthermore, \cite{d_Elb_e_2021_generic} defines \emph{weak-independence} and \emph{strong-independence}, and gives conditions implying that a model with a generic substructure is NSOP$_1$ and weak-independence is Kim-independence.
The model companion of fields of positive characteristic with a vector subspace over a finite subfield satisfies those conditions, so it is NSOP$_1$.
It was also proved that this model companion is not simple (\cite[Proposition 5.20]{d_Elb_e_2021_generic}).

It is natural to try and generalize this results to fields that are of characteristic $0$, or vector spaces that are over infinite subfields.
Another generalization is to consider modules over infinite subrings (a finite subring 
is a field).
In \cite{d_Elb_e_2021_generic}, the first named author showed that for fields of characteristic $0$ with an additive subgroup (which is a $\Z$-module), the class of existentially closed models is not first-order axiomatizable.

However, it is possible to study the existentially closed models of an inductive theory in a different logical setting, namely in Robinson's logic. In essence, it means that instead of studying models and elementary embeddings between them, we study existentially closed models and embeddings between them (see the introduction of \cite{Poizat2018}). Pillay \cite{Pillay2000}  refers to this setting as the \emph{Category of existentially closed models}. 
This approach was used by Haykazyan and Kirby \cite{Haykazyan_2021} in their study of exponential fields --- fields $F$ with a distinguished homomorphism $E:F^+\to F^\times$ from the additive group structure to the multiplicative group structure. 

(We note that there is a recent interest in positive model theory, which is a generalization of our setting (e.g. \cite{Haykazyan2019,hrushovski2020definability,dobrowolski2021kimindependence}).)

This paper follows the steps of \cite{Haykazyan_2021}, considering the structure of fields with a submodule.
We will first give a description of the existentially closed fields with submodules over a fixed ring (\cref{existentially closed}).
This description will not in general be first-order, except for the case of positive characteristic and submodules over a finite subring (see \Cref{rk_Rdefinable}). We use this description in \cref{section ultraproducts} to prove that an ultraproduct of models of ACF$_p$G over all primes $p$ belongs to the class of existentially closed fields with a vector subspace over a fixed pseudo-finite field, in spite of the fact that this class is not elementary (this is \cref{T:ultraproduct existentially closed}).

We will then prove that the category of existentially closed models of this theory is NSOP$_1$ (see \cref{ec nsop1}) but not NTP$_2$ (and in particular, not simple; see \cref{ec tp2}); the appropriate definitions for these concepts in the category of existentially closed models appear in \cref{ec tree properties}, and are taken from \cite{Haykazyan_2021}.
The proof of NSOP$_1$ will use weak independence (mentioned above).
We will also study strong independence, which does not help with proving NSOP$_1$ but has interesting properties of its own, including $n$-amalgamation for every $n$ (see \cref{n-amalgamation}). 
In the proofs we are using a definition of higher amalgamation that is slightly different from the one found in the literature (see \cite{Hrushovski98simplicityand,dePiro2006group,Goodrick2013amalgamation}). 
In the appendix we study this notion of amalgamation and its relation to independence.
\\

\noindent \textbf{Acknowledgement.} The authors would like to thank Jonathan Kirby for clarification and helpful comments, especially regarding \cref{ACF n amalgamation}, and thank Mark Kamsma for a helpful discussion. We also thank Zo\'e Chatzidakis for her comments after reading an earlier version, and especially for noticing some problems with \cref{ACF n amalgamation}.
We also thank Alex Kruckman for finding \cref{exa:Alex example} and allowing us to present it here.

We would also like to thank the anonymous referee for their careful reading and their comments.
}

\section{Preliminaries}
In this section, we will present the definitions and facts needed to work in the category of existentially closed models. 
Unless stated otherwise, all definitions and results will be given as they are presented by Haykazyan and Kirby \cite{Haykazyan_2021}.

\subsection{Existentially closed models of an inductive theory}

\begin{definition} \label{existentially closed def}
    A model $M\vDash T$ is an \emph{existentially closed} model of $T$ if for every extension $M\subseteq N\vDash T$, and every quantifier-free formula $\phi(x,a)$ with parameters $a\in M$, $N\vDash \exists x\phi(x,a)\implies M\vDash \exists x\phi(x,a)$.
\end{definition}

\begin{remarkcnt}
    If $T$ is inductive, then for every $A\vDash T$ we can construct by transfinite induction an extension $A\subseteq M$ such that $M\vDash T$ is existentially closed.
\end{remarkcnt}

Let $\mathrm{Emb}(T)$ be the category of models of $T$ with embeddings between them.
Let $\mathcal{EC}(T)$ be the full subcategory of $\mathrm{Emb}(T)$ consisting of existentially closed models and embeddings between them (which in particular preserve existential formulas). 

\begin{fact}[{\cite[Fact 2.3]{Haykazyan_2021}}] \label{companions}
    For two inductive theories $T_1$ and $T_2$, the following are equivalent
    \begin{enumerate}
        \item The theories $T_1$ and $T_2$ have the same universal consequences.
        \item Every model of $T_1$ embeds into a model of $T_2$ and vice-versa.
        \item The existentially closed models of $T_1$ and $T_2$ are the same.
    \end{enumerate}
\end{fact}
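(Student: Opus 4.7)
Write $T^\forall$ for the set of universal consequences of a theory $T$. The plan is to prove (1)$\Leftrightarrow$(2) and (1)$\Leftrightarrow$(3) separately; the first equivalence is a standard diagram argument, while the second hinges on the auxiliary lemma that for an inductive theory $T$, the existentially closed models of $T$ and of $T^\forall$ coincide.

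For (1)$\Rightarrow$(2), take $M \models T_1$; the standard diagram argument shows $T_2 \cup \mathrm{Diag}(M)$ is consistent, since otherwise a finite portion of $\mathrm{Diag}(M)$ would be refuted by $T_2$, and closing universally in the relevant parameters would exhibit a universal consequence of $T_2$---hence, by (1), of $T_1$---failing in $M$. Any model of $T_2 \cup \mathrm{Diag}(M)$ furnishes the required $T_2$-extension, and the reverse direction is symmetric. For (2)$\Rightarrow$(1), any universal $\phi \in T_1^\forall$ is preserved downward from a $T_1$-extension of a given $N \models T_2$, hence holds in $N$, giving $T_1^\forall \subseteq T_2^\forall$ and symmetrically.

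For (3)$\Rightarrow$(1), every model of $T_i$ embeds into some element of $\EC(T_i)$ by the remark preceding the fact, and universal sentences are preserved downward; thus $T_i^\forall$ equals the set of universal sentences true in all of $\EC(T_i)$. So $\EC(T_1)=\EC(T_2)$ forces $T_1^\forall = T_2^\forall$.

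The main obstacle is (1)$\Rightarrow$(3), which I would reduce to the auxiliary lemma that $\EC(T)=\EC(T^\forall)$ for every inductive $T$; granted this, $T_1^\forall=T_2^\forall$ immediately gives $\EC(T_1)=\EC(T_1^\forall)=\EC(T_2^\forall)=\EC(T_2)$. The inclusion $\EC(T)\subseteq\EC(T^\forall)$ is routine: any $M \in \EC(T)$ models $T^\forall$, and if $M \subseteq N \models T^\forall$, then $N$ itself embeds into some $N'\models T$ by the same diagram argument, so existential closedness of $M$ inside $N'$ forces it inside $N$. The delicate inclusion is $\EC(T^\forall)\subseteq\EC(T)$. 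Given $M \in \EC(T^\forall)$, axiomatise the inductive theory $T$ by $\forall\exists$-sentences via the classical preservation theorem. Checking $M \models T$ then reduces to showing that for every such axiom $\forall \bar x\,\exists \bar y\,\psi(\bar x,\bar y)$ and every $\bar a \in M$, the formula $\exists \bar y\,\psi(\bar a,\bar y)$ is realised in $M$; one finds such a witness by embedding $M$ into some $M'\models T$ (again by the diagram argument, since $M \models T^\forall$), noting $M'\models T^\forall$, and invoking existential closedness of $M$ in the class of $T^\forall$-extensions. Existential closedness of $M$ with respect to $T$-extensions then follows a fortiori from closedness in the larger class of $T^\forall$-extensions.
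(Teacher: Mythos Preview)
The paper does not prove this statement: it is quoted as a \emph{Fact} with a citation to \cite{Haykazyan_2021} and no argument is given. So there is nothing in the paper to compare your proposal against.

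That said, your proof is correct and self-contained. The equivalence (1)$\Leftrightarrow$(2) is the standard diagram/downward-preservation argument, and your reduction of (1)$\Leftrightarrow$(3) to the auxiliary lemma $\EC(T)=\EC(T^\forall)$ is the natural route. The only step worth flagging is your appeal to the Chang--\L o\'s--Suszko preservation theorem in the inclusion $\EC(T^\forall)\subseteq\EC(T)$: this is exactly where inductiveness of $T$ is used, and it is genuinely needed, since without an $\forall\exists$-axiomatisation there is no reason an existentially closed model of $T^\forall$ should satisfy $T$. One could alternatively avoid the preservation theorem by a back-and-forth chain argument (alternating embeddings into models of $T$ and using inductiveness to pass to the union), but your approach is cleaner.
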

Two theories $T_1$ and $T_2$ satisfying the above equivalent conditions are called \emph{companions}.
Thus, $\mathcal{EC}(T)$ uniquely determines the theory $T$ modulo companions for $T$ inductive.

We will also use the following fact.

\begin{fact}[{\cite[Fact 2.2]{Haykazyan_2021}}] \label{maximal type}
    Let $M$ be a model of an inductive theory $T$.
    The following are equivalent.
    \begin{enumerate}
        \item $M$ is an existentially closed model of $T$.
        \item For every tuple $a\in M$, the type $\tp^M_\exists(a)$ is a maximal existential type.
    \end{enumerate}
\end{fact}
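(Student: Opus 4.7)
The plan is to prove both directions by an amalgamation-by-compactness argument; the substantive work is in $(1)\Rightarrow(2)$.

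For $(2)\Rightarrow(1)$, the easy direction, let $M\subseteq N\vDash T$ and suppose $N\vDash\exists x\,\phi(x,a)$ for an existential formula $\phi$ with parameters $a\in M$. Then $\exists x\,\phi(x,a)$ is itself existential. Since existential formulas are preserved from $M$ to its extension $N$, the set $\tp^M_\exists(a)\cup\{\exists x\,\phi(x,a)\}$ is realized by $a$ in $N$, hence consistent with $T$. Maximality of $\tp^M_\exists(a)$ then forces $\exists x\,\phi(x,a)\in\tp^M_\exists(a)$, so $M\vDash\exists x\,\phi(x,a)$.

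For $(1)\Rightarrow(2)$, fix $a\in M$ and an existential formula $\phi(x)$ such that $T\cup\tp^M_\exists(a)\cup\{\phi(x)\}$ is consistent, witnessed by some $N\vDash T$ and $b\in N$; in particular $\tp^M_\exists(a)\subseteq\tp^N_\exists(b)$ and $N\vDash\phi(b)$. I would construct an amalgam: a model $N\subseteq P\vDash T$ together with an embedding $g\colon M\to P$ sending $a$ to $b$. Given such a $P$, we have $P\vDash\phi(g(a))$; since $g(M)\cong M$ is an existentially closed model of $T$ sitting inside the extension $P\vDash T$, existential closure forces $M\vDash\phi(a)$, and thus $\phi\in\tp^M_\exists(a)$.

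To build $P$, I would apply compactness to $T$ together with the elementary diagram of $N$ and the atomic diagram of $M$ with $b$ substituted for $a$. A finite fragment of the latter amounts to a formula $\eta(a,\bar m)$, where $\eta$ is a conjunction of literals and $M\vDash\eta(a,\bar m)$; hence $\exists\bar y\,\eta(a,\bar y)\in\tp^M_\exists(a)\subseteq\tp^N_\exists(b)$, so the corresponding formula $\exists\bar y\,\eta(b,\bar y)$ is already realized in $N$, giving joint satisfiability with any finite fragment of $\mathrm{Diag}(N)$. The main subtlety is getting the direction of the amalgamation right: one must embed $M$ into an extension of $N$ rather than the reverse, so that the containment $\tp^M_\exists(a)\subseteq\tp^N_\exists(b)$ can be used to transfer existential witnesses for tuples over $a$ in $M$ into witnesses over $b$ in $N$.
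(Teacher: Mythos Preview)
The paper does not prove this statement: it is presented as a cited fact from \cite{Haykazyan_2021} with no accompanying argument, so there is nothing in the paper to compare your proposal against. That said, your proof is correct and is essentially the standard argument. The direction $(2)\Rightarrow(1)$ is immediate as you wrote it. For $(1)\Rightarrow(2)$, your amalgamation-by-compactness construction works: the key point, which you identify correctly, is that the inclusion $\tp^M_\exists(a)\subseteq\tp^N_\exists(b)$ lets you verify finite satisfiability of the atomic diagram of $M$ (with $a$ replaced by $b$) inside $N$, yielding an embedding $g\colon M\to P\supseteq N$ with $g(a)=b$, after which existential closedness of $g(M)$ in $P$ pulls $\phi(b)$ back to $M\vDash\phi(a)$. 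One cosmetic remark: in your $(2)\Rightarrow(1)$ you call $\phi$ ``an existential formula'', but in Definition~\ref{existentially closed def} the formula $\phi$ is quantifier-free; this is harmless since $\exists x\,\phi(x,a)$ is existential either way.
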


\begin{remarkcnt} \label{maximal type over set}
    In particular, if $M$ is an existentially closed model of $T$, and $A\subseteq M$ is a subset, then $\tp^M_\exists(a/A)$ is a maximal existential type over $A$.
    Indeed, let $M_A$ be the model $M$ with added constant symbols for $A$, and let $T_A$ be the same theory as $T$ but in the expanded language.
    Every model of $T_A$ extending $M_A$ must interpret the constant symbols as $A$, so $M_A$ is an existentially closed model of $T_A$, as we allow parameters in the definition of existentially closed.
    The result then follows from \cref{maximal type}.
\end{remarkcnt}

\subsection{Amalgamation and joint embedding}

\begin{definition}
    A model $A\vDash T$ is an \emph{amalgamation base} for $\mathrm{Emb}(T)$ if for every two models $B_1,B_2\vDash T$ and embeddings $f_1:A\to B_1$ and $f_2:A\to B_2$, then there is a model $C\vDash T$ and embeddings $g_1:B_1\to C$ and $g_2:B_2\to C$ such that $g_1\circ f_1=g_2\circ f_2$.
    
    Furthermore, $A$ is a \emph{disjoint amalgamation base} if we can pick $g_1,g_2$ such that $g_1(B_1)\cap g_2(B_2)=g_1(f_1(A))$. 
\end{definition}

\begin{fact}[{\cite[Corollary 8.6.8]{hodges1993model}}]
Every existentially closed model is a disjoint amalgamation base.
\end{fact}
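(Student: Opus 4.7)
The plan is to build the desired amalgam via compactness, with existential closedness of $M$ entering at a single decisive step. First I would identify $M$ with its images under $f_1$ and $f_2$, treating it as a common substructure of $N_1$ and $N_2$. Expand the language of $T$ by fresh constants: a shared constant $c_m$ for each $m \in M$, and constants $c_a^1$, $c_b^2$ for each $a \in N_1 \setminus M$ and $b \in N_2 \setminus M$ respectively. Consider the theory
\[
\Sigma \;=\; T \;\cup\; \mathrm{Diag}(N_1) \;\cup\; \mathrm{Diag}(N_2) \;\cup\; \set{c_a^1 \neq c_b^2 : a \in N_1 \setminus M,\ b \in N_2 \setminus M},
\]
where both diagrams use the shared constants $c_m$ for elements of $M$. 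Any model of $\Sigma$, restricted to the original language, gives the desired $C \vDash T$ together with embeddings $g_1:N_1\to C$ and $g_2:N_2\to C$ that agree on $M$ and whose images meet exactly in $M$. So it suffices to show $\Sigma$ is consistent.

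Suppose $\Sigma$ were inconsistent. By compactness, there would exist quantifier-free conjunctions $\phi_1(\bar x,\bar m)\in\mathrm{Diag}(N_1)$ and $\phi_2(\bar y,\bar m)\in\mathrm{Diag}(N_2)$, where $\bar x$ and $\bar y$ stand for tuples of the fresh constants $\bar c^1\subseteq N_1\setminus M$ and $\bar c^2\subseteq N_2\setminus M$, such that
\[
T \;\vdash\; \forall \bar x\, \bar y \left( \phi_1(\bar x,\bar m)\wedge \phi_2(\bar y,\bar m) \;\to\; \bigvee_{i,j} x_i = y_j \right).
\]
The key move is then to apply existential closedness of $M$ in $N_2$: since $\exists \bar y\,\phi_2(\bar y,\bar m)$ is an existential sentence with parameters in $M$ satisfied by $N_2$, it already holds in $M$, producing a witness $\bar m'\subseteq M$. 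Substituting $\bar x = \bar c^1$ and $\bar y = \bar m'$ into the entailment and evaluating in $N_1\vDash T$, both hypotheses hold, forcing an equality $c_i^1 = m'_j$, which contradicts $c_i^1 \notin M$ while $m'_j \in M$.

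The main obstacle I anticipate is bookkeeping in the compactness step: one must ensure that the finite fragment producing the inconsistency involves the disjointness inequalities in a way that cleanly yields the universal entailment displayed above, while carefully distinguishing the shared constants naming $M$ from the fresh ones. Once the setup is correct, the single appeal to existential closedness of $M$ is precisely what upgrades a potential amalgamation into a \emph{disjoint} amalgamation; dropping the disjointness requirement would allow $\bar x$ and $\bar y$ to overlap and existential closedness would not be needed. A symmetric alternative, replacing the roles of $N_1$ and $N_2$, would work equally well and may be convenient if one wishes to argue by picking witnesses for $\phi_1$ instead.
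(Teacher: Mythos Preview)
The paper does not supply its own proof of this fact; it is stated as a \texttt{fact} environment with a citation to Hodges and no accompanying argument. So there is nothing in the paper to compare against.

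That said, your argument is correct and is essentially the standard compactness proof one finds in Hodges. The one point worth tightening is the case where the finite inconsistent fragment of $\Sigma$ happens to contain no disjointness inequalities at all: then the disjunction $\bigvee_{i,j} x_i = y_j$ is empty, and your concluding contradiction (``forcing $c_i^1 = m'_j$'') does not literally apply. The argument still goes through, of course, since in that case the entailment reads $T \vdash \forall \bar x\,\bar y\,\neg(\phi_1\wedge\phi_2)$ and evaluating at $(\bar c^1,\bar m')$ in $N_1$ already yields a contradiction; but it would be cleaner to say so explicitly rather than leaving it buried in the ``bookkeeping'' caveat. You should also make explicit that the tuple $\bar c^1$ is enlarged if necessary to include every constant from $N_1\setminus M$ that occurs in the finitely many inequalities, not just those occurring in $\phi_1$.
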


\begin{definition}
    The category $\mathrm{Emb}(T)$ has the \emph{joint embedding property} (JEP) if any two models of $T$ can be embedded into a third model.
\end{definition}

In the category of existentially closed models, extending $T$ to an inductive theory $T'$ with JEP corresponds to choosing a completion in first-order logic.
However, we need to make sure that $\mathcal{EC}(T')$ is contained in $\mathcal{EC}(T)$.
This gives rise to the following definition.

\begin{definition}
    An inductive extension $T'$ of an inductive theory $T$ is called a \emph{JEP-refinement} of $T$ if $T'$ has JEP and every existentially closed model of $T'$ is an existentially closed model of $T$.
\end{definition}

\begin{fact}[{\cite[Lemma 2.12]{Haykazyan_2021}}] \label{amalgamation base refinement}
    If $A$ is an amalgamation base for $\mathrm{Emb}(T)$, then $T\cup \mathrm{Th}_\exists(A)$ is a JEP-refinement of $T$.
    
    Furthermore, an existentially closed model of $T$ is a model of a unique JEP-refinement of $T$ modulo companions.
\end{fact}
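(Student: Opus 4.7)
The statement has two parts: (1) $T \cup \Th_\exists(A)$ is a JEP-refinement of $T$, and (2) every e.c.\ model $M$ of $T$ determines a unique JEP-refinement up to companions. I would handle these separately, using the amalgamation-base property of $A$ for (1) and combining JEP with the e.c.-ness of $M$ for (2).

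For (1), set $T' := T \cup \Th_\exists(A)$. Being inductive is automatic: $T$ is inductive and existential sentences are preserved under unions of chains. The refinement condition is also easy: given $M \models T'$ that is e.c.\ for $T'$ and an extension $M \subseteq N \models T$, the sentences of $\Th_\exists(A)$ are preserved upward through $M \subseteq N$, so $N \models T'$, and e.c.-ness of $M$ in $T'$ gives the required witness. The content lies in JEP. Given $B_1, B_2 \models T'$, I plan to use the standard compactness fact that $B \models \Th_\exists(A)$ if and only if $A$ embeds into some elementary extension $B^* \succ B$ (via consistency of the atomic diagram of $A$ together with the elementary diagram of $B$, which reduces precisely to $B$ satisfying the existential closures of finitely many atomic formulas witnessed in $A$). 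Applying this to each $B_i$ yields embeddings $f_i : A \hookrightarrow B_i^*$ with $B_i^* \models T$. I would then invoke the hypothesis that $A$ is an amalgamation base in $\mathrm{Emb}(T)$ to amalgamate $B_1^*, B_2^*$ over $A$ into some $C \models T$ with coherent embeddings $g_i : B_i^* \to C$. Since $A$ embeds into $C$, existential sentences satisfied in $A$ hold in $C$, giving $C \models T'$ and the required joint embedding of $B_1, B_2$.

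For (2), let $M$ be e.c.\ for $T$. Since every e.c.\ model is (in particular) an amalgamation base, part (1) applied to $M$ shows that $T_M := T \cup \Th_\exists(M)$ is a JEP-refinement containing $M$. If $T'$ is another JEP-refinement with $M \models T'$, I plan to show $\Th_\forall(T') = \Th_\forall(M)$; then any two such $T', T''$ share the same universal consequences and are companions by \cref{companions}. First observe that $M$ is automatically e.c.\ for $T'$: any extension $M \subseteq N' \models T'$ is in particular inside a model of $T$, so e.c.-ness of $M$ for $T$ applies. The inclusion $\Th_\forall(T') \subseteq \Th_\forall(M)$ is trivial. For the converse, take $\sigma \in \Th_\forall(M)$ and $N \models T'$; apply JEP of $T'$ to jointly embed $M$ and $N$ into some $N' \models T'$, and extend $N'$ to an e.c.\ model $N''$ of $T'$ (using that $T'$ is inductive). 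Applying e.c.-ness of $M$ in $T'$ to $M \subseteq N''$ yields $M \equiv_\exists N''$, hence $M \equiv_\forall N''$, so $N'' \models \sigma$. Since $N \subseteq N''$ and universal sentences descend to substructures, $N \models \sigma$, as required.

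The main obstacle is the uniqueness part: one needs to recognize that a JEP-refinement $T'$ of $T$ is determined, up to companions, by the universal theory of any single e.c.\ model it contains. The argument requires sandwiching an arbitrary $N \models T'$ between $M$ and a common e.c.\ extension $N''$, invoking JEP of $T'$ to build $N''$ and the e.c.-ness of $M$ in $T'$ to transport existential/universal content between $M$ and $N''$.
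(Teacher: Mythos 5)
The paper states this as a Fact cited from Haykazyan and Kirby's paper and does not include its own proof, so there is no in-paper argument to compare against. Your proof is correct: the compactness argument showing that $B \models \Th_\exists(A)$ iff $A$ embeds into some $B^* \succ B$ is the right tool to reduce JEP of $T \cup \Th_\exists(A)$ to the amalgamation-base property of $A$, and for the uniqueness clause the observation that any JEP-refinement $T'$ containing an e.c.\ model $M$ satisfies $\Th_\forall(T') = \Th_\forall(M)$ (hence \cref{companions} applies) is exactly what is needed. One small presentational point: in the compactness step you want the existential closures of finite conjunctions of \emph{literals} (atomic and negated atomic formulas) from the diagram of $A$, not merely atomic formulas; the argument itself handles this correctly, but the wording is slightly loose.
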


\subsection{Higher amalgamation}\label{higher amalgamtion}

We proceed to define higher amalgamation, as it was defined in \cite{Haykazyan_2021}.\\

Let $\mathcal{K}\subseteq \mathrm{Emb}(T)$ be a subcategory.
Let $n\ge 3$, consider $n$ as a set $n=\set{0,\dots,n-1}$ and consider $\mathcal{P}(n)$ and $\mathcal{P}^{-}(n)=\mathcal{P}(n)\setminus\set{n}$ as categories with a unique morphism $a\to b$ if $a\subseteq b$.
Define a $\mathcal{P}(n)$-system (respectively, $\mathcal{P}^-(n)$-system)  of $\mathcal{K}$ to be a functor $F$ from $\mathcal{P}(n)$ (respectively, $\mathcal{P}^{-}(n)$) to $\mathcal{K}$. 
For each $a\in \mathcal{P}(n)$, denote $F_a=F(a)$.

Suppose that for every $M\in \mathcal{K}$, we have a ternary relation $\forkindep$ on subsets of $M$. 
A $\mathcal{P}(n)$-system ($\mathcal{P}^-(n)$-system) $F$ is called \emph{independent} with respect to $\forkindep$, if for every $a\in \mathcal{P}(n)$ ($a\in \mathcal{P}^-(n)$) and every $b\subseteq a$,
$$F_b\forkindep_{\bigcup_{c\subsetneq b} F_c}\bigcup_{b\not\subseteq d\subseteq a} F_d$$
as subsets of $F_a$, where we consider every embedding $F_b\to F_a$ as an inclusion.

\begin{definition}
    Suppose $\mathcal{K},\forkindep$ are as above.
    Say that $\mathcal{K}$ has \emph{$n$-amalgamation} ($n\ge 3$) if any independent $\mathcal{P}^-(n)$-system in $\mathcal{K}$ can be completed to an independent $\mathcal{P}(n)$-system.
    Say that $T$ has $n$-amalgamation if $\mathrm{Emb}(T)$ has $n$-amalgamation.
\end{definition}

Note that this definition of independent systems and $n$-amalgamation is not the same as the one used by other authors, e.g. \cite{Hrushovski98simplicityand,dePiro2006group,Goodrick2013amalgamation}.
It is, however, similar to the definition of stable systems found in \cite{shelah1990classification}, with the main difference being that in stable systems all embeddings are inclusions and everything lives inside the monster model, so there is no amalgamation.
This enables us to use the following fact, which is originally stated for general stable theories, but will be presented here as in \cite[Fact 5.3]{Haykazyan_2021} where it is stated specifically for ACF. 

\begin{fact}[{\cite[Fact XII.2.5]{shelah1990classification}}]\label{fact shelah}
    Let $F=\set{F_s}_{s\subseteq n}$ be an independent $\mathcal{P}(n)$-system of ACF, where every $F_s$ is considered as a subset of $F_n$, and let $t\subseteq n$.
    For $i<m$ let $s(i)\in \mathcal{P}(n)$ and let $\bar{a}_i\in F_{s(i)}$.
    Assume that for some formula $\phi(\bar{x}_0,\dots,\bar{x}_{m-1})$ we have $F_n\vDash \phi(\bar{a}_0,\dots,\bar{a}_{m-1})$.
    Then there are $\bar{a}_i'\in F_{s(i)\cap t}$ such that $F_n\vDash \phi(\bar{a}_0',\dots,\bar{a}_{m-1}')$, and if $s(i)\subseteq t$, then $\bar{a}_i'=\bar{a}_i$.
\end{fact}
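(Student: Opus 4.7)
The plan is to proceed by induction on $|n \setminus t|$. The base case $t = n$ is immediate: take $\bar a_i' = \bar a_i$ for all $i$. For the inductive step it suffices to handle the case $t = n \setminus \{j\}$ for a single $j \in n \setminus t$, since one can then iterate, peeling off the elements of $n \setminus t$ one at a time; the condition that $\bar a_i' = \bar a_i$ whenever $s(i) \subseteq t$ is preserved along the iteration, because any such $i$ also satisfies $s(i) \subseteq t''$ at every intermediate stage $t'' \supseteq t$.

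So assume $t = n \setminus \{j\}$, and split the indices into $I = \{i < m : j \notin s(i)\}$ (forced to have $\bar a_i' = \bar a_i$) and $J = \{i < m : j \in s(i)\}$ (for which we need $\bar a_i' \in F_{s(i) \setminus \{j\}}$). The strategy is to construct an automorphism $\sigma$ of a monster model of ACF containing $F_n$ that fixes $F_t$ pointwise and sends each $\bar a_i$ with $i \in J$ into $F_{s(i) \setminus \{j\}}$. Setting $\bar a_i' := \sigma(\bar a_i)$, the tuple $(\bar a_0', \ldots, \bar a_{m-1}')$ then satisfies $\phi$ by $\sigma$-invariance, and $\bar a_i' = \bar a_i$ for $i \in I$ because $\bar a_i \in F_t$.

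Constructing $\sigma$ combines the independence axioms of the system with the stationarity of types over algebraically closed subfields in ACF. Unwinding the independence relation at each $b \ni j$ via the standard ACF forking calculus (symmetry, transitivity, and the characterization of non-forking by linear disjointness over the algebraically closed base) yields that the joint type $\tp((\bar a_i)_{i \in J} / F_t)$ is the unique non-forking extension of $\tp((\bar a_i)_{i \in J} / B)$, where $B := \bigcup_{i \in J} F_{s(i) \setminus \{j\}} \subseteq F_t$. The crux is then to realize this restricted type by tuples $\bar a_i' \in F_{s(i) \setminus \{j\}}$: one works inside the sub-system $(F_s)_{s \subseteq t}$, which is itself an independent $\mathcal{P}(n-1)$-system of ACF, and uses disjoint amalgamation of ACF together with saturation of the monster to produce the required tuples, extending via homogeneity to obtain $\sigma$. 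The main obstacle I anticipate is the careful bookkeeping of forking relations needed to guarantee that the constructed realizations are jointly independent in the correct way, so that they realize the full joint type over $F_t$ and hence witness $\phi$.
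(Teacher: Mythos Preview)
The paper does not prove this statement; it is cited as a fact from Shelah's \emph{Classification Theory} (Fact XII.2.5). There is therefore no in-paper proof to compare against, and I will assess your proposal on its own merits.

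Your inductive reduction to the case $t = n \setminus \{j\}$ is fine, but the core step contains a genuine error. You propose to find an automorphism $\sigma$ of the monster fixing $F_t$ pointwise and sending each $\bar a_i$ with $i \in J$ into $F_{s(i) \setminus \{j\}}$. This is impossible in general: since $F_{s(i) \setminus \{j\}} \subseteq F_t$ and $\sigma$ fixes $F_t$ pointwise, $\sigma$ is a bijection that restricts to the identity on $F_{s(i) \setminus \{j\}}$, so the $\sigma$-preimage of $F_{s(i) \setminus \{j\}}$ is $F_{s(i) \setminus \{j\}}$ itself. Hence $\sigma(\bar a_i) \in F_{s(i) \setminus \{j\}}$ forces $\bar a_i \in F_{s(i) \setminus \{j\}}$ already, which is exactly what fails when $j \in s(i)$ and $\bar a_i$ is a genuinely new element of $F_{s(i)}$. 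In effect you are trying to realize the full type $\tp\big((\bar a_i)_{i \in J}/F_t\big)$ inside $F_t$, and there is no such realization.

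The statement only asks you to preserve a \emph{single formula} $\phi$, not the entire type, and the correct mechanism is the one stable theories provide for precisely this purpose: non-forking extensions over models are coheirs, i.e.\ finitely satisfiable in the base model. Concretely, one uses the independence of the system to show that a suitable block of the $\bar a_i$'s is independent from the remaining parameters over an algebraically closed subfield $M$ lying inside the intersection of the relevant $F_{s(i)}$'s; finite satisfiability in $M$ (equivalently, definability of the type over $M$) then produces witnesses $\bar a_i' \in M \subseteq F_{s(i)\cap t}$ for which $\phi$ still holds, and one iterates. That is the content of Shelah's argument. Your automorphism strategy would only be viable if the conclusion asserted type-equivalence of $(\bar a_i)$ and $(\bar a_i')$ over $F_t$, which it does not and cannot.
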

In \cref{Results on higher amalgamation} we prove some well known results about higher amalgamation using our definition, including the fact that ACF has $n$-amalgamation for every $n$ (\cref{ACF n amalgamation}).

\subsection{Monster model}
We present a notion of saturation for the category of existentially closed models.
It is convenient to work inside a large saturated model, which we will call a monster model.

This section borrows from \cite[\S 2.4]{Haykazyan_2021}, except for our definition of strong $\kappa$-homogeneity and \cref{saturation implies}, see \cref{HK strong homogeneity}.

\begin{definition}
    Let $T$ be an inductive theory with JEP, and suppose $M$ is an existentially closed model of $T$. 
    Let $\kappa$ be a cardinal.
    \begin{itemize}
        \item $M$ is called \emph{$\kappa$-saturated} if every unitary existential type with parameters from a set $A\subseteq M$ of cardinality less than $\kappa$ is realized in $M$.
        \item $M$ is called
        \emph{$\kappa$-universal} if for every $A\vDash T$, and a tuple $a\subseteq A$ with $\abs{a}<\kappa$, there exists a tuple $b\subseteq M$ realizing $\tp_\exists^A(a)$ 
        (that is, $\tp_\exists^A(a)\subseteq \tp_\exists^M(b)$).
        \item $M$ is called \emph{$\kappa$-homogeneous} if for any two tuples $a,b$  from $M$ with length less than $\kappa$ such that $a\equiv^\exists b$, and every singleton $a'\in M$, there exists a singelton $b'\in M$ such that $aa'\equiv^\exists bb'$.
        \item $M$ is called \emph{strongly $\kappa$-homogeneous}  if for any two tuples $a,b$  from $M$ with length less than $\kappa$ such that $a\equiv^\exists b$, there exists an automorphism $\sigma$ of $M$ such that $\sigma(a)=b$.   
    \end{itemize}
\end{definition}

\begin{remarkcnt}
    If $\kappa>\abs{L}$, then $M$ is $\kappa$-universal iff every model $A\vDash T$ of size less than $\kappa$ can be embedded in $M$, by Löwenheim-Skolem.
\end{remarkcnt}

\begin{proposition} \label{saturation implies}
    In the same settings as above, the following are equivalent:
    \begin{enumerate}
        \item $M$ is $\kappa$-saturated,
        \item $M$ is $\kappa^+$-universal and $\kappa$-homogeneous 
        \item $M$ is $\aleph_0$-universal and $\kappa$-homogeneous
    \end{enumerate}
    Furthermore, if $\kappa=\abs{M}$, then $\kappa$-homogeneity implies strong $\kappa$-homogeneity.
\end{proposition}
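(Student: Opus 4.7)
The proof naturally splits into the three implications $(1) \Rightarrow (2)$, $(2) \Rightarrow (3)$, and $(3) \Rightarrow (1)$, together with the final clause. The middle implication is immediate since $\aleph_0 \leq \kappa^+$. For $(1) \Rightarrow (2)$, I would derive $\kappa$-homogeneity from $\kappa$-saturation by transporting $\tp^M_\exists(a'/a)$ along the partial $\exists$-isomorphism $a \leftrightarrow b$: the resulting existential type over $b$ is finitely consistent in $M$, because $a \equiv^\exists b$ makes every existential sentence $\exists x\,\phi(x,y)$ agree on $a$ and $b$, so $\kappa$-saturation (applied over the set $b$ of size less than $\kappa$) provides the required $b'$. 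For $\kappa^+$-universality, I would embed any $A \vDash T$ of cardinality at most $\kappa$ into $M$ by a transfinite induction, at each stage realizing via $\kappa$-saturation the $\exists$-type of the next element of $A$ over the previously embedded parameters; finite consistency of the type being realized is inherited from its consistency in $A$ by the usual preservation of existential formulas.

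The heart of the matter is $(3) \Rightarrow (1)$. My plan is to upgrade $\aleph_0$-universality to $\mu$-universality for every $\mu < \kappa$: concretely, I would prove by transfinite induction on ordinals $\mu < \kappa$ that every $\mu$-tuple in any existentially closed $B \vDash T$ has a realization in $M$. The base cases $\mu < \aleph_0$ are exactly $\aleph_0$-universality; at a limit $\mu$ one builds compatible realizations at all smaller stages (iterating the successor step) and takes a union, which works because every existential formula involves only finitely many variables. For a successor stage $\mu = \lambda + 1$ with $\lambda$ infinite, the cardinality identity $|\lambda + 1| = |\lambda|$ lets one apply the inductive hypothesis at $\lambda$ --- after a harmless reindexing of the $\mu$-tuple --- to produce a fresh realization $(c_i)_{i<\mu}$ in $M$. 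Since $B$ is existentially closed, $\tp^B_\exists(a_{<\lambda})$ is maximal by \cref{maximal type}, and similarly any realization in $M$ has a maximal $\exists$-type; hence $(c_i)_{i<\lambda}$ and the previously built $(b_i)_{i<\lambda}$ share the same maximal $\exists$-type in $M$, so they are existentially equivalent, and $\kappa$-homogeneity produces $b_\lambda \in M$ with $(b_i)_{i<\mu} \equiv^\exists (c_i)_{i<\mu}$. With this tool, given $\pi(x)$ over $A \subseteq M$ with $|A| < \kappa$, realize $\pi$ in some existentially closed extension $N \supseteq M$ by some $c$, apply the induction to the tuple $(A,c) \in N$ of length $|A|+1 < \kappa$ to obtain $(A^*, c^*) \in M$ with the same $\exists$-type; since $A^* \equiv^\exists A$ in $M$, a final application of $\kappa$-homogeneity provides $b \in M$ with $(A,b) \equiv^\exists (A^*, c^*) \equiv^\exists (A,c)$, so $b$ realizes $\pi$.

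For the ``furthermore'' clause, with $\kappa = |M|$, I plan a classical back-and-forth: enumerating $M = (m_i)_{i<\kappa}$, extend the given partial $\exists$-isomorphism $a \mapsto b$ to an automorphism by alternately applying $\kappa$-homogeneity in forward and backward directions, ensuring every $m_i$ eventually appears in both the domain and the range of the partial map; since $|a| < \kappa$ and every $i < \kappa$ has cardinality less than $\kappa$, at every stage the partial isomorphism has length below $\kappa$, so $\kappa$-homogeneity applies, and the resulting bijection preserves every existential formula and hence every atomic formula. The main obstacle is organizing the transfinite induction in $(3) \Rightarrow (1)$: one must arrange the base, limit, and successor cases uniformly, the key insight being that the cardinality identity $|\mu + 1| = |\mu|$ for infinite $\mu$ reduces the infinite-successor step to the inductive hypothesis at $\mu$ itself.
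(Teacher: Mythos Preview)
Your proposal is correct. For $(3)\Rightarrow(1)$ the paper takes a slightly different, more modular route: it first isolates the lemma ``$\lambda$-universality together with $\lambda$-homogeneity implies $\lambda$-saturation'' (which is essentially your final application step), and then bootstraps \emph{saturation} levels by induction on cardinals $\lambda\le\kappa$, invoking the already-proved $(1)\Rightarrow(2)$ at each successor step to pass from $\lambda$-saturation to $\lambda^+$-universality. You instead upgrade \emph{universality} directly via a nested transfinite induction on tuple length, using homogeneity at each extension step; in effect you are re-deriving the relevant piece of $(1)\Rightarrow(2)$ inside the induction rather than calling it as a black box. The underlying content is the same; the paper's organisation avoids the inner/outer induction (your write-up mildly conflates the outer successor case, which is trivial by reindexing since $|\lambda+1|=|\lambda|$, with the inner extension step, which is where homogeneity and the maximality of existential types are actually used). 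One small point worth making explicit in your $(1)\Rightarrow(2)$: the very first step of the transfinite construction --- realizing $\tp^A_\exists(a_0)$ in $M$ with no parameters yet available --- needs JEP to transfer finite satisfiability from $A$ to $M$; the paper spells this out.
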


\begin{proof}
    $(1)\implies (2)$:
    Suppose $M$ is $\kappa$-saturated.
    To prove $\kappa^+$-universality, let $A\vDash T$ and let $a=(a_i)_{i< \kappa}\subseteq A$ be a tuple.
    For $\alpha\le \kappa$, denote $a_{<\alpha}=(a_i)_{i<\alpha}$.
    We will construct $b=(b_i)_{i< \kappa}$ satisfying $\tp^A_\exists(a)$, by constructing $b_{<\alpha}$ by induction on $\alpha\le \kappa$.
    For $\alpha=1$, by JEP there is some model $N\vDash T$ and embeddings $f_1:A\to N$ and $f_2:M\to N$.
    $f_1(a_0)$ realizes $\tp^A_\exists(a_0)$ in $N$, as it is an existential type.
    Because $M$ is existentially closed and embeds in $N$, it follows that $\tp^A_\exists(a_0)$ is consistent in $M$, and there is $b_0\in M$ realizing it by saturation.
    For $\alpha+1$, consider the existential type $p(x)=\tp^A_\exists(a_\alpha/a_{<\alpha})$, replacing the parameters $a_{<\alpha}$ with $b_{<\alpha}$ results in a consistent existential type $q(x)$ in $M$, because for every finite conjunction $\psi(x,b_{<\alpha})$ of formulas from $q(x)$, we have $A\vDash \exists x \psi(x,a_{<\alpha})$, so $M\vDash \exists x \psi(x,b_{<\alpha})$.
    By saturation there is some $b_\alpha\in M$ satisfying $q(x)$, so $B_{<\alpha}b_\alpha=b_{<\alpha +1}$ satisfies $\tp^A_\exists(a_{<\alpha+1})$.
    If $\alpha$ is a limit ordinal, take the union $b_{<\alpha}=\bigcup_{\beta<\alpha}b_{<\beta}$. 
    To prove $\kappa$-homogeneity, suppose $a,b\subseteq M$ are tuples of length less than $\kappa$, and let $a'\in M$.
    Consider $p(x)=\tp^M_\exists(a'/a)$, replacing the parameters $a$ by $b$ results in a consistent existential type, because for every finite conjunction $\psi(x,b)$ of formulas from $q(x)$, we have $A\vDash \exists x \psi(x,a)$, so $M\vDash \exists x \psi(x,b)$.   
    
    $(2)\implies (3)$:
    trivial.
    
    $(3)\implies(1)$:
    First we will prove that $\kappa$-universality and $\kappa$-homogeneity imply $\kappa$-saturation:
    Let $p(x)$ be a unitary existential type ($\abs{x}=1$) over $A\subseteq M$ of size less than $\kappa$.
    There is some extension $N\supseteq M$ with an element $b\in N$ realizing $p(x)$.
    By $\kappa$-universality, there is some $A'b'\subseteq M$ that satisfy $\tp_\exists^N(Ab)$, when considered as tuples.
    In particular, we have $A'\equiv^\exists A$ in $M$, so by $\kappa$-homogeneity there is some $b''\in M$ such that $A'b'\equiv^\exists Ab''$.
    Thus, $b''\vDash \tp_\exists^M(b/A)=p(x)$.
    
    Now, assuming $\aleph_0$-universality and $\kappa$-homogeneity, will prove by induction on $\lambda\le \kappa$ that $M$ is $\lambda$-saturated.
    For $\lambda=\aleph_0$, it follows from the above claim.
    For $\lambda^+$, we know that $M$ is $\lambda$-saturated, so by $(1)\implies (2)$ $M$ is $\lambda^+$-universal. 
    We also know that $M$ is $\lambda^+$-homogeneous, so by the claim $M$ is $\lambda$-saturated.
    For $\lambda$ a limit cardinal, a set of parameters $A\subseteq M$ of size less than $\lambda$, is also of size less than $\mu$ for some $\mu<\lambda$.
    
    For the ``furthermore'' part, if $M$ is $\abs{M}$-homogeneous and $a\equiv^\exists b$ in $M$, we can construct an automorphism mapping $a$ to $b$ by the back and forth method.
\end{proof}

\begin{remarkcnt} \label{HK strong homogeneity}
    Our definition of strong $\kappa$-homogeneity differs from the one given in \cite{Haykazyan_2021}, which is
    \begin{itemize}
        \item $M$ is called \emph{strongly $\kappa$-homogeneous} if for any amalgamation base $A$ of size less than $\kappa$ and embeddings $f_1,f_2$ of $A$ in $M$, there exists an automorphism $\sigma$ of $M$ such that $\sigma\circ f_1=f_2$.   
    \end{itemize}
    However, strong $\kappa$-homogeneity in our definition implies strong $\kappa$-homogeneity in their definition:
    $A$ is an amalgamation base, so there is a model $N\vDash T$ and embeddings $g_1, g_2$ of $M$ in $N$ such that $g_1\circ f_1=g_2\circ f_2$.
    With $A$ considered as a tuple, we have 
    $$\tp^M_\exists(f_1(A))=\tp^N_\exists(g_1(f_1(A)))=\tp^N_\exists(g_2(f_2(A)))=\tp^M_\exists(f_1(A)),$$
    because $M$ is existentially closed.
    From our definition of strong homogeneity, there is an automorphism $\sigma$ of $M$ such that $\sigma(f_1(A))=f_2(A)$ considered as tuples, thus $\sigma\circ f_1=f_2$.
\end{remarkcnt}

The following example (due to Alex Kruckman) shows that the opposite direction of \cref{HK strong homogeneity} does not hold, namely, strong $\kappa$-homogeneity in the sense of \cite{Haykazyan_2021} does not even imply $\aleph_0$-homogeneity in our sense.

\begin{example} \label{exa:Alex example}
    Let $V$ be a vector space over $\F_2$ of countable dimension, and let $(e_i)_{i<\omega}$ be a basis. 
    Consider the two-sorted language $L$ with sorts $X$ and $Y$ containing unary function symbols $f^a :X \to X$ for every $a \in V$, unary predicate symbols $(P^i)_{i<\omega}$ on $X$ and a function symbol $\pi:X\to Y$.
    
    Let $T$ be the $L$-theory whose axioms are:
    \begin{enumerate}
        \item $Y$ is infinite, and for each $y\in Y$, $\pi^{-1}(y)$ is infinite.
        \item $V$ acts freely on $X$ (via the functions $f_a$).
        \item For each $x\in X$ and $i\ne j<\omega$, $P^i(x)\iff P^i(f^{e_j}(x))$ and $P^i(x)\iff \lnot P^i(f^{e_i}(x))$.
        \item For each $x\in X$ and $a\in V$, $\pi(x)=\pi(f^a(x))$.
    \end{enumerate}
    It is not difficult to see that $T$ is complete, has quantifier elimination and is thus inductive (in essence, each fiber of $\pi$ is a model of the theory described in \cite{kruckman2017}). Also, as $T$ is model complete, every model of $T$ is existentially closed.
    
    Let $\sim$ be the equivalence relation on $2^{\omega}$ (functions from $\omega$ to $\{0,1\}$) of agreeing up to finitely many coordinates ($\eta \sim \nu$ iff for some $n<\omega$, $\eta(i)=\nu(i)$ for all $i>n$). Note that there are continuum many $\sim$-classes. 
    
    Construct the following model $M$ of $T$: $X_M = 2^\omega$, $Y_M = 2^\omega/\mathord{\sim}$, $\pi_M:X_M\to Y_M$ is the canonical projection, $f^{e_i}_M(\eta)$ flips the $i$'th coordinate of $\eta\in X_M$ and $P^i_M(\eta)$ holds iff $\eta(i)=1$.
    Note that no two elements of $X_M$ have the same type. This implies (*): if $N \models T$ embeds into $M$, there is a unique embedding witnessing this (and in particular $M$ is rigid). On the one hand, (*) implies that $M$ is strongly $2^{\aleph_0}$-homogeneous in the sense of \cite{Haykazyan_2021}, and on the other hand, by quantifier elimination all elements of $Y_M$ have the same type, so $M$ is not $\aleph_0$-homogeneous (because if $w\neq r \in Y_M$, given some $\eta \in w$ there is no $\nu\in r$ such that $\eta\equiv \nu$).
\end{example}





Call $M$ saturated if it is $\abs{M}$-saturated.
We will call a large saturated model a \emph{monster model}.
In these settings, monster models are often called universal domains, or e-universal domains, but we kept the terminology of \cite{Haykazyan_2021}.

We will assume that monster models exist. This usually requires some set theoretic assumptions like the generalized continuum hypothesis, but one can change the set-theoretic universe without changing any object we are interested in, ensuring that monster models of large enough sizes exist (see \cite{halevi2021saturated}). One can also work without a monster model, using only commuting diagrams, but it is less convenient.

\subsection{Model theoretic tree properties} \label{ec tree properties}
We will present two properties of formulas, TP$_2$ and SOP$_1$, adapted to the category of existentially closed models.
The main difference is that the formulas have to be existential, and there must be an existential formula that witnesses inconsistency. 
In the following, let $T$ be an inductive theory with JEP, and work inside a monster model $\M\vDash T$.

\begin{definition} \label{definition tp2 exist}
  An existential formula $\phi(x,y)$ ($x,y$ tuples) has TP$_2$ with respect to $\mathcal{EC}(T)$ if there is an amalgamation base $A\vDash T$, an existential formula $\psi(y_1,y_2)$ and parameters $(a_{i,j})_{i,j<\omega}$ from $A$ such that the following hold:
  \begin{enumerate}
      \item for all $\sigma\in \omega^\omega$, the set $\set{\phi(x,a_{i,\sigma(i)})}$ is consistent.
      \item $\psi(y_1,y_2)$ implies that $\phi(x,y_1)\land \phi(x,y_2)$ is inconsistent, i.e.
      $$T\vdash \lnot\exists xy_1y_2 [\psi(y_1,y_2)\land \phi(x,y_1)\land \phi(x,y_2)]$$
      \item for every $i,j,k<\omega$, if $j\ne k$, then $A\vDash \psi(a_{i,j},a_{i,k})$.
  \end{enumerate}
  
  If no existential formula has TP$_2$, we say that $\mathcal{EC}(T)$ is NTP$_2$.
\end{definition}

\begin{definition} \label{definition sop1 exist}
  An existential formula $\phi(x,y)$ ($x,y$ tuples) has SOP$_1$ with respect to $\mathcal{EC}(T)$ if there is an amalgamation base $A\vDash T$, an existential formula $\psi(y_1,y_2)$ and a binary tree of parameters $(a_\eta)_{\eta\in 2^{<\omega}}$ from $A$ such that the following hold:
  \begin{enumerate}
      \item for every branch $\sigma\in 2^{\omega}$, the set $\set{\phi(x,a_{\sigma|_n})}$ is consistent.
      \item $\psi(y_1,y_2)$ implies that $\phi(x,y_1)\land \phi(x,y_2)$ is inconsistent, i.e.
      $$T\vdash \lnot\exists xy_1y_2 [\psi(y_1,y_2)\land \phi(x,y_1)\land \phi(x,y_2)]$$
      \item for all $\eta\in 2^{<\omega}$, if $\nu\unrhd\eta\frown \vect{0}$, then $A\vDash \psi(a_{\eta\frown \vect{1}}, a_\nu)$.
  \end{enumerate}
  
  If no existential formula has SOP$_1$, we say that $\mathcal{EC}(T)$ is NSOP$_1$.
\end{definition}

\begin{remarkcnt}
    If the class $\EC(T)$ is first-order axiomatizable by $T'$, that is $T'$ is the model companion of $T$, then the above definitions are equivalent to $T'$ being NTP$_2$, NSOP$_1$ respectively in the usual first-order sense. To be precise, an existential formula $\phi(x,y)$ witnesses SOP$_1$/TP$_2$ (as defined in \cite[Definition 3.1]{Che14}) in $T'$ iff $\phi(x,y)$ witnesses SOP$_1$/TP$_2$ in $\EC(T)$. This is essentially because in model complete theories, every formula is equivalent to an existential one.  
\end{remarkcnt}

There is a Kim-Pillay style characterization for NSOP$_1$ theories in the category of existentially closed models. 
This characterization is due to Haykazyan and Kirby \cite{Haykazyan_2021}, and is based on a theorem of Chernikov and Ramsey \cite{Chernikov2016} for complete first-order theories.

\begin{fact}[{\cite[Theorem 6.4]{Haykazyan_2021}}] \label{nsop1 condition exist}
    Let $\forkindep$ be a $\mathrm{Aut}(\M)$-invariant ternary relation on small subsets of $\M$.
    Assume that $\forkindep$ satisfies the following, for any small existentially closed model $M$ and tuples $a,b$ from $\M$:
    \begin{itemize}
        \item (Strong finite character) if $a\not\forkindep_M b$, then there is an existential formula $\phi(x,b,m)\in \tp_\exists(a/Mb)$ such that for any $a'$ realizing $\phi$, the relation $a'\not\forkindep_M b$ holds.
        \item (Existence over models) $a\forkindep_M M$
        \item (Monotonicity) $aa'\forkindep_M bb'$ implies $a\forkindep_M b$.
        \item (Symmetry) $a\forkindep_M b$ implies $b\forkindep_M a$.
        \item (Independence theorem) If $c_1\forkindep_M c_2$, $b_1\forkindep_M c_1$, $b_2\forkindep_M c_2$ and $b_1\equiv^\exists_M b_2$ then there exists $b$ with $b\equiv^\exists_{Mc_1} b_1$ and $b\equiv^\exists_{Mc_2} b_2$.
    \end{itemize}
    Then $\mathcal{EC}(T)$ has NSOP$_1$.
\end{fact}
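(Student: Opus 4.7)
The plan is to prove the contrapositive, mimicking the Chernikov--Ramsey criterion for complete first-order theories. Assume $\EC(T)$ has SOP$_1$, witnessed by an existential formula $\phi(x,y)$, an existential formula $\psi(y_1,y_2)$ and a tree $(a_\eta)_{\eta\in 2^{<\omega}}$ from an amalgamation base $A$; by \cref{amalgamation base refinement} we may replace $T$ by the JEP-refinement $T\cup \Th_\exists(A)$ and embed everything into $\M$. The aim is to derive a contradiction using the five listed properties of $\forkindep$.

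\emph{Step one: a tree-indiscernible witness.} Fix a small existentially closed submodel $M\prec\M$ and apply an Erd\H{o}s--Rado style tree-modeling argument at the level of existential types. This produces a new binary tree $(a'_\eta)_{\eta\in 2^{<\omega}}$ in $\M$ which still satisfies the three clauses of \cref{definition sop1 exist} for the same $\phi$ and $\psi$, and which is strongly indiscernible over $M$ in the sense that the existential type over $M$ of a finite tuple of nodes depends only on its tree-isomorphism type inside $2^{<\omega}$. The usual first-order tree-modeling lemma descends to existential types because $\phi$ and $\psi$ are already existential, and the inconsistency clause~(2) is a universal statement about those existential formulas.

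\emph{Step two: applying the independence theorem.} Set $b:=a'_{\langle 1\rangle}$ and $c:=a'_{\langle 0,1\rangle}$. Since $\langle 0,1\rangle \unrhd \langle 0\rangle$, clause~(3) of \cref{definition sop1 exist} gives $\M\vDash\psi(b,c)$, so by clause~(2) the formula $\phi(x,b)\wedge\phi(x,c)$ is inconsistent. Using clause~(1) along the branches $1^\omega$ and $0\,1^\omega$, pick realizations $a_1\vDash\phi(x,b)$ and $a_2\vDash\phi(x,c)$ by extracting Morley-like sequences along those branches. Combining strong indiscernibility over $M$ with existence-over-models, symmetry, and monotonicity, one arranges
\begin{enumerate}
    \item $a_1\equiv^\exists_M a_2$,
    \item $b\forkindep_M c$,
    \item $a_1\forkindep_M b$ and $a_2\forkindep_M c$.
\end{enumerate}
The independence theorem then produces $a\in \M$ with $a\equiv^\exists_{Mb}a_1$ and $a\equiv^\exists_{Mc}a_2$; since $\phi$ is existential this yields $\M\vDash\phi(a,b)\wedge\phi(a,c)$, contradicting $\psi(b,c)$.

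The delicate point is the tree-modeling argument of Step one: one must verify that a Ramsey-style extraction using only existential types produces a tree whose path-structure is rich enough so that, along each branch, a Morley-like sequence witnessing the required $\forkindep$-independences can be extracted. This is where strong finite character plays its role: it lets one detect failures of $\forkindep$-independence by a single existential formula, so non-independence is preserved under the EM-type compactness argument that builds the strongly indiscernible tree. Once this is in place, Step two is essentially a transcription of the standard complete-first-order argument.
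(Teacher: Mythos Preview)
The paper does not prove this statement; it is recorded as a \emph{Fact} with a citation to \cite[Theorem~6.4]{Haykazyan_2021}, so there is no proof in the paper to compare against. Your outline follows the Chernikov--Ramsey strategy, which is indeed what Haykazyan--Kirby adapt; in that sense the overall architecture is correct.

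That said, there is a genuine gap in your Step two. You write that ``combining strong indiscernibility over $M$ with existence-over-models, symmetry, and monotonicity, one arranges'' the three conditions $a_1\equiv^\exists_M a_2$, $b\forkindep_M c$, and $a_i\forkindep_M$ (respective parameter). But the listed axioms give you almost nothing here: existence over models only yields $d\forkindep_M M$; monotonicity and symmetry only let you shrink or swap an independence you already have. None of them manufactures $b\forkindep_M c$ or $a_1\forkindep_M b$ out of tree-indiscernibility alone. This is precisely the nontrivial core of the Chernikov--Ramsey argument, and it is where strong finite character is actually used: one argues that if, say, $b\not\forkindep_M c$, then a single existential formula in $\tp_\exists(b/Mc)$ witnesses the failure; by indiscernibility this formula propagates along the tree, and one then plays it off against the consistency/inconsistency clauses of the SOP$_1$ witness to force a contradiction (typically via an intermediate array or via the ``generalised indiscernibles force independence'' lemma). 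Your proposal neither states nor proves this step.

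Relatedly, you have mislocated the role of strong finite character. It is not needed for the tree-modeling step (that is pure Ramsey/structural and works at the level of existential types regardless of $\forkindep$); it is essential in Step two to convert non-independence into a definable, hence indiscernibility-transferable, obstruction. Without making that mechanism explicit, the sentence ``one arranges (1)--(3)'' is exactly the missing proof.
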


It is folklore that the independence theorem is equivalent to 3-amalgamation, see e.g. the discussion under \cite[Definition 9.1.3]{kim2014simplicity}.
However, different definitions of amalgamation are used by different authors, as noted in the beginning of \cref{higher amalgamtion}. 
For this reason we include in the Appendix a proof of this equivalence in our setting (\Cref{equivalence 3 amalgamation}).

\section{Special models of fields with a submodule} 
In this section, we will define the theory of fields with a submodule, and give a characterization of special models that we are interested in: existentially closed models and amalgamation bases.

\subsection{Existentially closed models}
For the rest of the paper, let $R$ be an integral domain.

\begin{lemma} \label{R-modules intersection}
    If $A,B,C$ are $R$-modules, that are submodules of a common $R$-module, such that $B\subseteq A$, then $A\cap(B+C)=B+(A\cap C)$.
\end{lemma}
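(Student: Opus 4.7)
The statement is the classical modular law for submodules, so the plan is a short direct double-inclusion argument; the main (and only) subtlety is using the hypothesis $B \subseteq A$ in the right spot.

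For the inclusion $B + (A \cap C) \subseteq A \cap (B+C)$, I would argue that both summands on the left lie in each of the two modules on the right. Indeed, $B \subseteq A$ by hypothesis and $A \cap C \subseteq A$ trivially, so $B + (A \cap C) \subseteq A$; and $B \subseteq B + C$ while $A \cap C \subseteq C \subseteq B + C$, so $B + (A \cap C) \subseteq B + C$. Intersecting these gives the desired containment.

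For the nontrivial direction, $A \cap (B+C) \subseteq B + (A \cap C)$, I would take an arbitrary $x \in A \cap (B+C)$ and write $x = b + c$ with $b \in B$ and $c \in C$. Since $B \subseteq A$, we have $b \in A$, and then $c = x - b$ lies in $A$ because $x \in A$ and $A$ is closed under subtraction. Hence $c \in A \cap C$, and $x = b + c \in B + (A \cap C)$. This is where the hypothesis $B \subseteq A$ is used crucially; without it the step $c = x - b \in A$ fails.

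There is no real obstacle here — the argument is a two-line verification of the modular identity, and no module-theoretic tool beyond closure under addition and subtraction is needed. The only thing to be careful about is to state explicitly that we use $B \subseteq A$ to deduce $b \in A$, since otherwise the identity would fail (the analogous statement is false for sets without this hypothesis).
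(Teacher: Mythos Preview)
Your proof is correct and follows essentially the same approach as the paper's: a direct double-inclusion, with the nontrivial direction handled by writing $x=b+c$ and observing $c=x-b\in A$ since $B\subseteq A$. The paper's version simply declares the inclusion $B+(A\cap C)\subseteq A\cap(B+C)$ to be clear rather than spelling it out as you did.
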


\begin{proof}
    It is clear that $B+(A\cap C)\subseteq A\cap(B+C)$. 
    For the other inclusion, suppose $a\in A\cap (B+C)$.
    There are $b\in B$ and $c\in C$ such that $a=b+c$, but $b\in B\subseteq A$, so we get $c=a-b\in A\cap C$. 
    Thus, $a\in B+(A\cap C)$.
\end{proof}

\begin{definition}
    Let $L_{R;P}$ be the language of rings with a constant symbol for every element $r\in R$, and a unitary predicate $P$.
    \begin{itemize}
        \item Define the theory F$_{R\text{-module}}$\footnote{F stands for the theory of fields, as ACF stands for the theory of algebraically closed fields.} in the language $L_{R;P}$ to be the theory of fields with the quantifier-free diagram of $R$, and $P$ an $R$-submodule of the universe.
        \item Define the theory ACF$_{R\text{-module}}$ in the language $L_{R;P}$ to be the extension of F$_{R\text{-module}}$ with the added axioms that the universe is an algebraically closed field ($\ACF_{R\text{-module}}=\ACF\cup \mathrm{F}_{R\text{-module}}$).
    \end{itemize}
\end{definition}

\begin{remarkcnt}
    If $M,N\vDash \mathrm{F}_{R\text{-module}}$, then
    \begin{enumerate}
        \item $R$ is a subring of $M$.
        \item $M$ is an $L_{R;P}$-substructure of $N$ iff $M$ is a subfield of $N$ and $P_N\cap M=P_M$ 
    \end{enumerate}
\end{remarkcnt}

\begin{example}
    If $R=\Z$, then F$_{\Z-\text{module}}$ is the theory of fields of characteristic $0$ with an additive subgroup.
    If $R=\Q$, then F$_{\Q-\text{module}}$ is the theory of fields of characteristic $0$ with a divisible additive subgroup.
    If $R=\F_p$, then F$_{\F_p-\text{module}}$ is the theory of fields of characteristic $p$ with an additive subgroup, which was studied in \cite{d_Elb_e_2021_generic}.
\end{example}

\begin{definition}
   Let $K$ be a field containing $R$. 
   Call a variety $V\subseteq K^n$ \emph{$R$-free} if there is some field extension $K'\supseteq K$, and $a\in K'^n$ a generic point of $V$ over $K$, such that $a$ is $R$-linearly independent over $K$.
   That is, if $r_0a_0+\dots+r_{n-1}a_{n-1}\in K$ for $r_i\in R$, then $r_0=\dots=r_{n-1}=0$.
\end{definition}

\begin{definition}
   Let $M\vDash \mathrm{F}_{R\text{-module}}$ be a field with an $R$-submodule, and let $0\le k\le n$, $0\le s$. For a matrix $A\in \Mat_{n\times s}(R)$ and a tuple $c\in M^n$, call $(A,c)$ a \emph{$k$-compatible pair} if for every $r_0,\dots,r_{k-1}\in R$,
    \begin{enumerate}
        \item $r_0A_0+\dots+r_{k-1}A_{k-1}=0\implies r_0c_0+\dots+r_{k-1}c_{k-1}\in P_M$,
        \item for $k\le i< n$, either $A_i\ne r_0A_0+\dots+r_{k-1}A_{k-1}$, or  $r_0c_0+\dots+r_{k-1}c_{k-1}-c_i\notin P_M$,
    \end{enumerate}
    where $A_i$ is the $i$-th row of $A$.
\end{definition}

\begin{theorem} \label{existentially closed}
    Let $M\vDash \mathrm{F}_{R\text{-module}}$ be a field with an $R$-submodule. 
    The model $M$ is existentially closed iff for every $R$-free variety $V\subseteq M^s$, $0\le k\le n$ and $(A,c)$ a $k$-compatible pair, where $A\in \Mat_{n\times s}(R)$ and $c\in M^n$, there is a point $b\in V$ such that for $a=Ab+c$, $a_0,..,a_{k-1}\in P_M$ and $a_k,..,a_{n-1}\notin P_M$. 
\end{theorem}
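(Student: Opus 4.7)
The plan is to prove the two implications separately. For the forward direction, assume $M$ is existentially closed. Given an $R$-free variety $V \subseteq M^s$ and a $k$-compatible pair $(A, c)$, I will construct an extension $(K', P_N) \supseteq (M, P_M)$ in which the desired configuration exists, then invoke existential closedness to transfer it back to $M$. Using $R$-freeness of $V$, pick a field extension $K' \supseteq M$ with a generic point $b \in K'^s$ of $V$ that is $R$-linearly independent over $M$. Set $a_i := A_i b + c_i$ and define $P_N := P_M + \sum_{i < k} R\,a_i$ as a submodule of $K'$. The key technical verifications are: (a) $P_N \cap M = P_M$, so that $(M, P_M) \subseteq (K', P_N)$ is a genuine $L_{R;P}$-extension (using \cref{R-modules intersection}, the $R$-linear independence of $b$, and condition (1) of $k$-compatibility); and (b) $a_i \notin P_N$ for $k \le i < n$ (using the same independence together with condition (2)). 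The existential formula asserting the desired configuration then holds in $(K', P_N)$ and, by existential closedness of $M$, also in $M$.

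For the backward direction, assume the stated property and show $M$ is existentially closed. Take an extension $N \supseteq M$ and an existential formula $\exists x\,\phi(x, m)$ with witness $b \in N^s$; one may assume $\phi$ is a conjunction of literals. Reduce $\phi$ to the canonical form of the statement by introducing auxiliary variables: each term $t(x, m)$ appearing in a $P$- or $\lnot P$-literal is replaced by a fresh variable $u$ together with the equation $u = t(x, m)$, and each polynomial inequation $q(x, m) \ne 0$ is replaced by the equation $v\,q(x, m) = 1$ for a fresh $v$. The extended witness then satisfies a system of polynomial equations (defining a variety) together with $P$/$\lnot P$ conditions on specific coordinates. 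The remaining step --- and the main obstacle --- is to extract from this data an $R$-free variety $V$ and a $k$-compatible pair $(A, c)$: using the $R$-linear dependencies of the extended tuple over $M$, split it into a maximal $R$-linearly independent subtuple (which becomes the generic point of $V$) and the remaining coordinates, encoded through the rows of $A$ and the entries of $c$. The $k$-compatibility conditions will follow from the consistency of $\phi$ in $N$ together with the $R$-submodule structure on $P_N$. Applying the assumed property then produces a witness of $\phi$ in $M$.

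I expect the forward direction to be essentially formal once $P_N$ is defined correctly, with step (b) being the one requiring real use of the $k$-compatibility hypothesis. The backward direction is more delicate: the reduction from a general existential formula to the canonical form requires careful linear algebra over $R$, which is only assumed to be an integral domain and therefore lacks a full decomposition theory for its modules, so some care is needed in handling the $R$-linear dependencies of the extended witness and their interaction with both the field and submodule structure.
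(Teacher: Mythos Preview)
Your approach matches the paper's in both directions; the forward direction is essentially identical.

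In the backward direction, the one point you flag but do not resolve is the passage from the extended witness $a'$ to a matrix $A$ with entries in $R$. If you take $b'$ to be a maximal $R$-linearly independent subtuple of $a'$ over $M$, then each remaining $a'_j$ satisfies a relation $r_j a'_j + \sum_i r_{ji} b'_i \in M$ with $r_j \neq 0$, so the matrix expressing $a'$ in terms of $b'$ and $M$ naturally has entries in $\fr(R)$, not $R$. The paper handles this by working over $\fr(R)$ from the start (noting that $R$- and $\fr(R)$-linear independence over $M$ coincide inside a field containing $R$) to write $a' = Ab' + c$ with $A$ over $\fr(R)$ and $c \in M^n$, and then clearing denominators: if $d \in R$ is the product of the denominators appearing in $A$, replace $(A, b')$ by $(dA, \tfrac{1}{d}b')$. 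The rescaled $b'$ remains $R$-linearly independent over $M$, so its locus $V$ is still $R$-free, and now $A \in \Mat_{n \times s}(R)$ as required. With this in hand, the verification of $k$-compatibility and the inclusion $W(Ay+c) \subseteq V(y)$ proceed as you describe.
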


\begin{proof}
    For the left to right implication, suppose $V,A,c$ are as above.  
    There is some field extension $M'\supseteq M$ and $b'\in M'$ a generic point of $V$ over $M$ such that $b'$ is $R$-linearly independent over $M$. 
    Let $a'=Ab'+c$, and consider $M'$ as a model of F$_{R\text{-module}}$, with $P_{M'}=P_M+\vect{a'_0,\dots,a'_{k-1}}_R$. 
    To show that $M$ is an $L_{R;P}$-substructure of $M'$, we need to show that $P_{M'}\cap M=P_M$.
    By \Cref{R-modules intersection}, $P_{M'}\cap M=(P_M+\vect{a'_0,\dots,a'_{k-1}}_R)\cap M=P_M+(\vect{a'_0,\dots,a'_{k-1}}_R\cap M)$, so it is enough to show $\vect{a'_0,\dots,a'_{k-1}}_R\cap M\subseteq P_M$. 
    Suppose $m\in \vect{a'_0,\dots,a'_{k-1}}_R\cap M$, we can write $m=r_0a'_0+\dots+r_{k-1}a'_{k-1}$ with $r_i\in R$. 
    Substitute $a'_i$ for $A_ib'+c_i$ and rearrange to get
    $$(r_0A_0+\dots+r_{k-1}A_{k-1})b'=m-(r_0c_0+\dots+r_{k-1}c_{k-1})\in M.$$
    However, $b'$ is $R$-linearly independent over $M$, so we must have $r_0A_0+\dots+r_{k-1}A_{k-1}=0$. 
    This implies that $m=r_0c_0+\dots+r_{k-1}c_{k-1}$, and by $k$-compatibility $r_0c_0+\dots+r_{k-1}c_{k-1}\in P_M$, so $m\in P_M$. 
   
    Consider the formula
    $$\phi(y)=V(y)\land \bigwedge_{i< k}A_iy+c_i\in P\land \bigwedge_{k\le i<n} A_iy+c_i\notin P$$
    where $\abs{y}=\abs{b'}$.
    We want to show that $\phi(y)$ is satisfied by $b'$ in $M'$.
    It is obvious that $b'\in V(M')$ and $A_ib'+c_i=a'_i\in P_{M'}$ for $i< k$,
    it remains to prove that $A_ib'+c_i=a'_i\notin P_{M'}$ for $k\le i< n$.
    Assume towards contradiction that $a_i'\in P_{M'}$ for some $k\le i< n$, then there are $r_0,\dots,r_{k-1}\in R$ and $p\in P_M$ such that $a'_i=p+r_0a'_0+\dots+r_{k-1}a'_{k-1}$. 
    Substitute $a'_j$ for $A_jb'+c_j$ and rearrange to get
    $$(A_i-r_0A_0+\dots-r_{k-1}A_{k-1})b'= p +r_0c_0+\dots+r_{k-1}c_{k-1}-c_i\in M.$$
    Again, because $b'$ is $R$-linearly independent over $M$, this implies $A_i-r_0A_0+\dots-r_{k-1}A_{k-1}=0$, so $A_i=r_0A_0+\dots+r_{k-1}A_{k-1}$. 
    It also follows that $p+r_0c_0+\dots+r_{k-1}c_{k-1}-c_i=0$, so $ r_0c_0+\dots+r_{k-1}c_{k-1}-c_i=-p\in P_M$, in contradiction to $k$-compatibility. 
    $\phi(y)$ is satisfied by $b'$ in $M'$, so by existential closedness there is some $b\in V(M)$, such that $a=Ab+c$ satisfies $a_0,..,a_{k-1}\in P_M$, $a_k,..,a_{n-1}\notin P_M$, as needed.
    
    For the right to left implication, let $M\vDash \mathrm{F}_{R\text{-module}}$ satisfy the right-hand condition, and let $M'\vDash \mathrm{F}_{R\text{-module}}$ be some model extending $M$.
    We need to show that for every formula $\psi(x)$ which is a conjunction of atomic formulas, where $x=(x_0,\dots,x_{n-1})$ is a tuple of variables, if $M'\vDash \exists x \psi(x)$, then $M\vDash \exists x \psi(x)$.
    Atomic formulas in $L_{R;P}$ take one of the following forms:
    \begin{enumerate}
        \item $q(x)=0$,
        \item $q(x)\ne0$,
        \item $q(x)\in P$,
        \item $q(x)\notin P$,
    \end{enumerate}
    where $q(x)$ is a polynomial over $M$.
    Let $\psi(x)$ be a conjunction of atomic formulas.
    By introducing more variables, we can replace the atomic formulas of the second form $q(x)\ne 0$ with $x_n\cdot q(x)=1$, to get an atomic formula of the first form, because $\exists x (q(x)\ne 0)\iff \exists x,x_n (x_n\cdot q(x)=1)$.
    Similarly we can replace the third and fourth forms $q(x)\in P$, $q(x)\notin P$ with $q(x)=x_n\land x_n\in P$, $q(x)=x_n\land x_n\notin P$.
    After those replacements, we are left only with atomic formulas of the forms $q(x)=0$, $x_i\in P$, and $x_i\notin P$. 
    
    Furthermore, suppose $a'\in M'$ witnesses the existence $M'\vDash \exists x \psi(x)$.
    For every $i<n$, either $a'_i\in P$ or $a'_i\notin P$.
    Taking the conjunction of $\psi$ with the corresponding conditions $x_i\in P$ or $x_i\notin P$, we get a stronger formula $\psi^*(x)$ that is still satisfied by $a'$, and has the additional property that for every $i<n$ either $x_i\in P$ or $x_i\notin P$ appears in $\psi^*(x)$.
    Thus, it is enough to prove existential closedness for formulas with the above property, and we can assume that $\psi(x)$ is of the form
    $$\psi(x)=W(x)\land x_0,..,x_{k-1}\in P\land x_k,..,x_{n-1}\notin P$$
    where $W(x)$ is a conjunction of polynomial equations, i.e. a variety over $M$.
    
    Let $a'\in M'$ witness the existence $M'\vDash \exists x \psi(x)$.
    Consider the fraction field of $R$, $\fr(R)\subseteq M$.
    There is some $0\le s\le n$, and some tuple $b'\in M'^s$ that is $\fr(R)$-linearly independent over $M$, such that $M+\vect{a'}_{\fr(R)}=M\oplus\vect{b'}_{\fr(R)}$. 
    Write $a'=Ab'+c$ for $A\in \Mat_{n\times s}(\fr(R))$, $c\in M^n$.
    We can assume without loss of generality that $A$ is a matrix over $R$, else let $0\ne d\in R$ be the product of the denominators of all elements in $A$, and replace $A,b'$ with $dA,\frac{1}{d}b'$ to get a matrix over $R$.
    
    We will prove that $(A,c)$ is a $k$-compatible pair. Let $r_1,\dots,r_k\in R$, if $r_1A_1+\dots+r_kA_k=0$, then 
    \begin{align*}
        P_{M'}\ni r_1a'_1+\dots+r_ka'_k&=(r_1A_1+\dots+r_kA_k)b'+r_1c_1+\dots+r_kc_k\\ 
        &=r_1c_1+\dots+r_kc_k
    \end{align*}
    but also $r_1c_1+\dots+r_kc_k\in M$, so $r_1c_1+\dots+r_kc_k\in P_M$. 
    Suppose for $k\le i<n$ that both $A_i=r_1A_1+\dots+r_kA_k$ and $r_1c_1+\dots+r_kc_k-c_i\in P_M$, then 
    \begin{align*}
        r_1a'_1+\dots+r_ka'_k-a'_i&=(r_1A_1+\dots+r_kA_k-A_i)b'+r_1c_1+\dots+r_kc_k-c_i\\ 
        &=r_1c_1+\dots+r_kc_k-c_i\in P_M.
    \end{align*}
    Thus, $a'_i\in P_M+\vect{a'_1,..,a'_k}_R\subseteq P_{M'}$, a contradiction.
    Let $V$ be the locus of $b'$ over $M$.
    The pair $(A,c)$ is $k$-compatible and $V$ is $R$-free, so by our assumption there exists $b\in V$, such that for $a=Ab+c$ we have $a_1,..,a_k\in P_M$, $a_{k+1},..,a_n\notin P_M$. 
    Furthermore, $W(Ay+c)$ is contained in $V(y)$, as $a'=Ab'+c$ belongs to $W$, so $a=Ab+c\in W$.
    We found $a\in M^n$ such that $M\vDash \psi(a)$, as needed.
\end{proof}

\begin{remarkcnt}\label{enough to consider irreducible}
    Note that the statement of \cref{existentially closed} is equivalent to the same statement quantifying over irreducible $R$-free varieties; indeed, a variety $V$ is $R$-free iff one of its irreducible component is $R$-free, because any generic point of $V$ is the generic point of one of its irreducible components.
\end{remarkcnt}

\begin{remarkcnt}\label{rk_Rdefinable}
    Let $M$ be an existentially closed model of F$_{R-\text{module}}$. Then $R$ is definable as a subset of $M$. Indeed, $R = \set{x\in M \mid xP_M\subseteq P_M}$ (i.e. defined by the universal formula $\forall y\in P\ xy\in P$): `$\subseteq $' is clear. For the other direction assume by contradiction that $m\in M\setminus R$ and $mP_M\subseteq P_M$, and consider the structure $N = (\overline{M(t)},P_M+R\frac{t}{m})$, where $t$ is transcendental over $M$. Then it is easy to check that $N$ is an extension of $M$ and that $t\notin P_N$. Thus, $N\vDash \exists x (x\in P \wedge mx\notin P)$ (the second conjunct uses the assumption towards contradiction), and since $M$ is existentially closed, we get a contradiction. This is essentially the same proof as \cite[Proposition 5.32]{d_Elb_e_2021_generic}.
    
    It follows that if $R$ is infinite, then the class of existentially closed models of F$_{R-\text{module}}$ is not elementary --- else, starting with some existentially closed model, we could construct by compactness an existentially closed model extending it with a strictly larger $R$. On the other hand, if $R$ is finite, then this class is elementary, by \cite[Proposition 5.4]{d_Elb_e_2021_generic}.
    In particular, if $R$ is infinite, the characterization of existentially closed models given in \Cref{existentially closed} is not first-order.
\end{remarkcnt}

\subsection{Ultraproducts of existentially closed models}\label{section ultraproducts}

In \cite{D_ELB_E_2021_acfg}, the theory of fields of characteristic $p>0$ with a predicate for an additive subgroup, which is exactly $\mathrm{F}_{\F_p-module}$, was shown to have a model companion named ACF$_p$G.
It was also shown in \cite[Remark 1.20]{D_ELB_E_2021_acfg} that an ultraproduct of models of ACF$_p$G, with $\mathcal{U}$ a non-principal ultrafilter on the primes, is \emph{not} existentially closed as a field with an additive subgroup.
Here we will show that the ultraproduct \emph{is} existentially closed as a model of $\mathrm{F}_{K-module}$ (fields with a $K$-vector subspace), where $K=\prod_{p\in\mathrm{Primes}} \F_p/\mathcal{U}$ is a pseudo-finite field of characteristic 0.

\subsubsection*{Setting.}
Let $I$ be an indexing set, and for every $\alpha\in I$ let $R_\alpha$ be a ring and $M_\alpha\vDash\mathrm{F}_{R_\alpha-module}$.

Let $L_{Q,P}$ be the language of rings with two added predicates $Q$ and $P$, and let $\mathcal{M}_\alpha$ be the reduct to $L_{Q,P}$ of the expansion of $M_\alpha$ given by interpreting $Q$ as $R_\alpha$ (i.e. replacing the constants for $R_\alpha$ by a predicate for it).
Let $\mathcal{U}$ be a non-principal ultrafilter on $I$ and define $\mathcal{M}=\prod_{\alpha\in I} \mathcal{M}_\alpha/\mathcal{U}$ in the language $L_{Q,P}$.
Define $R=Q_{\mathcal{M}}$.
We have $R=\prod_{\alpha\in I} R_\alpha/\mathcal{U}$, so $R$ is a ring, and $P_\mathcal{M}$ is a module over $R$.

Let $M$ be the reduct to $L_{R;P}$ of the expansion of $\mathcal{M}$ given by adding constants for the elements of $R$ (i.e. replacing the predicate of $R$ with constants).
We get that $M\vDash \mathrm{F}_{R-module}$.

Now suppose that  $V\subseteq M^s$ is a variety defined by a conjunction of polynomial equations 
$$\phi(x;b)=(f_0(x;b)=0)\land\dots\land(f_{n-1}(x;b)=0),$$
where $b$ is some tuple in $M$.
Elements of $M$ are equivalence classes of sequences in $\prod_{\alpha\in I} M_\alpha$, so we can choose a representative sequence $(b_\alpha)_{\alpha\in I}$ for $b$, where each $b_\alpha$ is a $\abs{b}$-tuple in $M_\alpha$.
Define varieties $V_\alpha=\phi(M_\alpha,b_\alpha)\subseteq M^s$ for each $\alpha\in I$.

\begin{lemma} \label{L:ultraproduct R-free}
    If $V$ is an irreducible variety, then so is $V_\alpha$ for $\mathcal{U}$-almost all $\alpha\in I$.
    If $V$ is furthermore an $R$-free variety, then $V_\alpha$ is $R_\alpha$-free for $\mathcal{U}$-almost all $\alpha\in I$.
\end{lemma}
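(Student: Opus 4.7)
The plan is to express each of the two properties---irreducibility and $R$-freeness---as first-order conditions on the parameter $b$ in a suitable algebraically closed setting, and then to transport them between the ultraproduct and the factors via \L{}o\'s's theorem. Since the $M_\alpha$ need not be algebraically closed, it is convenient to work inside $\widetilde{\mathcal{M}} := \prod_{\alpha\in I}\overline{\mathcal{M}_\alpha}/\mathcal{U}$, which is an algebraically closed $L_{Q,P}$-structure extending $\mathcal{M}$ with $Q$ and $P$ inherited coordinatewise (in particular $Q_{\widetilde{\mathcal{M}}}=R$).

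For the first assertion the plan is to invoke the classical algebraic geometry fact that, for the fixed formula $\phi(x;y)$ defining the family, the locus $\{b : V_b\ \text{is geometrically irreducible}\}$ is definable by a formula $\psi(y)$ in the language of rings; this follows from the constructibility of the irreducible locus in a family of varieties together with quantifier elimination for ACF. Since $\widetilde{\mathcal{M}}$ and each $\overline{M_\alpha}$ are algebraically closed, $V$ is irreducible iff $\widetilde{\mathcal{M}}\models\psi(b)$, and $V_\alpha$ is irreducible iff $\overline{M_\alpha}\models\psi(b_\alpha)$. Applying \L{}o\'s's theorem to $\psi(y)$ in $\widetilde{\mathcal{M}}$ then yields the claim.

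For the second assertion, assume in addition that $V$ is $R$-free, and restrict to the large set of $\alpha$ for which $V_\alpha$ is already known to be irreducible by the first part. The plan is to use the following characterization: for a geometrically irreducible variety $V_b$ defined over a field $K$ containing a subring $R$, $V_b$ is $R$-free iff no linear polynomial $r_0 x_0 + \dots + r_{s-1} x_{s-1} - c$ with $r_0,\dots,r_{s-1} \in R$ not all zero and $c \in K$ lies in the ideal of $V_b$. The crucial additional observation is that geometric irreducibility forces $K$ to be algebraically closed inside the function field $K(V_b)$, so if such a linear relation happens to hold on $V_b(\overline{K})$ with $r_i \in R \subseteq K$, then the constant $c=\sum_i r_i a_i$ (evaluated at the generic point $a$) automatically lies in $K$. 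Consequently, ``$V_b$ is not $R$-free'' is expressed in any algebraically closed $L_{Q,P}$-structure by the first-order formula
$$
\exists r_0,\dots,r_{s-1},c\ \Bigl[\,\bigwedge_i Q(r_i)\ \wedge\ \bigvee_i r_i \neq 0\ \wedge\ \forall x\,\bigl(\phi(x;b) \to \textstyle\sum_i r_i x_i = c\bigr)\Bigr].
$$
Applying \L{}o\'s's theorem to this formula in $\widetilde{\mathcal{M}}$ then gives the desired implication.

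The main obstacle will be justifying the two characterizations rigorously. The definability of the irreducible locus in a family of varieties is standard but deserves a careful reference. The first-order description of $R$-freeness relies crucially on the algebraic closedness of the base field in the function field of a geometrically irreducible variety---without this, the witnessing constant $c$ could escape $K$ and the displayed formula would fail to capture the geometric definition of $R$-freeness.
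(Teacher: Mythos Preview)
Your strategy for the second assertion—packaging ``not $R$-free'' into a single $L_{Q,P}$-formula and pushing it through \L o\'s—is genuinely different from the paper's and, when it works, is slicker: the paper instead builds generic points $a_\alpha\in N_\alpha$ of $V_\alpha$ over $M_\alpha$, takes their ultraproduct, verifies (using van den Dries's degree bounds for ideal membership) that the result is generic for $V$ over $M$, and then transfers $R$-linear independence of this concrete tuple back to the factors. Your route avoids that construction entirely.

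However, there is a real gap. The lemma is stated for arbitrary $M_\alpha\models\mathrm{F}_{R_\alpha\text{-module}}$, which need not be algebraically closed, and both of your key equivalences silently assume they are. First, ``$V$ is irreducible iff $\widetilde{\mathcal M}\models\psi(b)$'' conflates irreducibility of $V$ over $M$ (the hypothesis) with irreducibility over the algebraically closed $\widetilde{\mathcal M}$; the forward implication fails in general, so your \L o\'s step never gets off the ground. The paper sidesteps this by citing van den Dries's result that irreducibility of $\phi(K;c)$ is uniformly definable over \emph{any} field $K$, and applies \L o\'s in $\mathcal M$ itself.

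Second, and more seriously, your claim that ``geometric irreducibility forces $K$ to be algebraically closed inside $K(V_b)$'' is false in positive characteristic: what you need is that $K(V_b)/K$ is a \emph{regular} extension, i.e.\ that $V_b$ is geometrically \emph{integral}. Take $K=\mathbb F_p(t)$, $R=\mathbb F_p$, and $V\subseteq\mathbb A^2$ defined by $y^p=t$. This $V$ is irreducible over $K$ and even geometrically irreducible (over $\overline K$ the equation becomes $(y-t^{1/p})^p=0$, one component), and it is $R$-free: a generic point is $(s,t^{1/p})$ with $s$ transcendental, and no nontrivial $\mathbb F_p$-combination of $s,t^{1/p}$ lies in $K$. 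Yet your formula holds in $\overline K$ with $(r_0,r_1)=(0,1)$ and $c=t^{1/p}$, since $y=t^{1/p}$ identically on $V(\overline K)$. So your formula does not characterize $R$-freeness here. The paper's generic-point argument is insensitive to this, since it never needs $M$ algebraically closed in the function field.

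Both issues evaporate when each $M_\alpha$ is algebraically closed (as in the only application, \cref{T:ultraproduct existentially closed}, where the $M_\alpha$ are existentially closed and hence algebraically closed), so your argument is salvageable there; but it does not establish the lemma at the stated generality.
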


\begin{proof}
    Suppose that $V$ is an irreducible variety.
    By \cite[Theorem 2.10(i)]{vandendries1984bounds} (see also \cite[Theorem 10.2.1]{Joh16}), there is a formula in the language of rings $\theta(y)$  with $y$ a $\abs{b}$-tuple of variables, such that for any field $K$ and $c\in K$, $K\vDash\theta(c)$ iff the variety $\phi(K,c)\subseteq K^s$ is an irreducible variety.
    We know that $\theta(b)$ holds, so $\theta(b_\alpha)$ holds for $\mathcal{U}$-almost all $\alpha\in I$.
    
    Now suppose furthermore that $V$ is $R$-free.
    For every $\alpha\in I$, we can find a field extension $N_\alpha\supseteq M_\alpha$ with $a_\alpha\in N_\alpha$ a generic point of $V_\alpha$ over $M_\alpha$.
    Let $\mathcal{N}_\alpha$ be the $L_{Q,P}$ structure on $N_\alpha$ given by interpreting $Q$ and $P$ as in $\mathcal{M}_\alpha$.
    Define $\mathcal{N}=\prod_{\alpha\in I} \mathcal{N}_\alpha/\mathcal{U}$, $\mathcal{N}$ is an $L_{Q,P}$-superstructure of $\mathcal{M}$ with $Q$ and $P$ interpreted as in $\mathcal{M}$.
    Let $a\in \mathcal{N}$ be the equivalence class of $(a_\alpha)_{\alpha\in I}$.
    
    We will show that $a$ is a generic point of $V$ over $M$.
    First of all, for all $\alpha\in I$ we have $N_\alpha\vDash\phi(a_\alpha,b_\alpha)$, so $N\vDash\phi(a,b)$.
    Now suppose $a$ satisfies some polynomial equation over $M$, $f(a;b')=0$.
    For $\mathcal{U}$-almost all $\alpha\in I$, $f(a_\alpha, b'_\alpha)=0$ holds, where $(b'_\alpha)_{\alpha\in I}$ is a representative sequence of $b'$.
    Consider only those $\alpha\in I$ where the above holds and $V_\alpha$ is irreducible.
    As $a_\alpha$ is a generic point of the irreducible variety $V_\alpha$, the polynomial $f(x;b'_\alpha)$ is in the ideal generated by $f_0(x;b_\alpha),\dots f_{n-1}(x;b_\alpha)$.
    By \cite[Theorem 1.11]{vandendries1984bounds}, there is a bound $D$, depending only on $n$ and the degrees of $f,f_0,\dots f_{n-1}$ (and not on the field or the specific polynomials), and polynomials $g_\alpha^0,\dots,g_\alpha^{n-1}$ over $M_\alpha$ with degrees at most $D$ such that 
    $$f(x;b'_\alpha)=g_\alpha^0f_0(x;b_\alpha)+\dots +g_\alpha^{n-1}f_{n-1}(x;b_\alpha).$$
    We can quantify over all polynomials of degree at most $D$, so the above property is first-order expressible in the language of rings.
    Thus there are polynomial $g^0,\dots,g^{n-1}$ over $M$ of degree at most $D$ such that 
    $$f(x;b')=g^0f_0(x;b)+\dots +g^{n-1}f_{n-1}(x;b).$$
    In particular, $f(x;b')$ is in the ideal generated by $f_0(x;b),\dots f_{n-1}(x;b)$.
    Thus $a$ is a generic point of $V$.
    
    As $V$ is an $R$-free variety, it has an $R$-linearly independent generic point over $M$, and because it is irreducible all generic points share this property (a linear dependence is in particular a polynomial equation, and all generic points of an irreducible variety solve the same polynomial equations).
    Thus $a$ is $R$-linearly independent, and because this property is first-order expressible in $L_{Q,P}$, it follows that for $\mathcal{U}$-almost all $\alpha\in I$, $a_\alpha$ is $R_\alpha$-linearly independent.
    Thus for $\mathcal{U}$-almost all $\alpha\in I$, $V_\alpha$ is an $R_\alpha$-free variety.
\end{proof}

\begin{theorem} \label{T:ultraproduct existentially closed}
    If for all $\alpha\in I$, $M_\alpha$ is an existentially closed model of $\mathrm{F}_{R_\alpha-module}$, then $M$ is an existentially closed model of $\mathrm{F}_{R-module}$.
\end{theorem}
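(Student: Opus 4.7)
The plan is to verify that $M$ satisfies the characterization of existentially closed models given by \cref{existentially closed}. Take an $R$-free variety $V \subseteq M^s$, which by \cref{enough to consider irreducible} we may assume to be irreducible, together with a $k$-compatible pair $(A, c)$ where $A \in \Mat_{n \times s}(R)$ and $c \in M^n$; the goal is to produce $b \in V$ such that $a = Ab + c$ has $a_0, \ldots, a_{k-1} \in P_M$ and $a_k, \ldots, a_{n-1} \notin P_M$.

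First I would fix representatives $(b_\alpha)_{\alpha \in I}$ of the tuple of parameters defining $V$, together with entrywise representatives $(A_\alpha)_{\alpha \in I}$ and $(c_\alpha)_{\alpha \in I}$ of $A$ and $c$, and set $V_\alpha = \phi(M_\alpha, b_\alpha)$ as in the setup preceding \cref{L:ultraproduct R-free}. By that lemma, $V_\alpha$ is an irreducible $R_\alpha$-free variety for $\mathcal{U}$-almost every $\alpha$. The plan is then to apply \cref{existentially closed} inside each such $M_\alpha$ to obtain witnesses $b_\alpha' \in V_\alpha$, and glue them into $b' = [(b_\alpha')_\alpha] \in M^s$; \L o\'s's theorem will give $b' \in V$ and that $a = Ab' + c$ has the required $P$-membership pattern.

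The only non-routine point is verifying that $(A_\alpha, c_\alpha)$ is a $k$-compatible pair in $M_\alpha$ for $\mathcal{U}$-almost every $\alpha$, so that \cref{existentially closed} can actually be invoked there. At first sight this looks worrying, since $k$-compatibility involves universal quantification over $R^k$, and elements of $R_\alpha$ need not arise as components of representatives of elements of $R$. The key observation that resolves this is that the appropriate setting is the multi-sorted structure $\mathcal{M}$ in the language $L_{Q,P}$, in which $R$ is the interpretation of the unary predicate $Q$, so quantification over $R$ becomes first-order quantification over $Q$. The two defining conditions of $k$-compatibility then become a single universal $L_{Q,P}$-sentence (over the $Q^k$-tuple of coefficients) with the entries of $A$ and $c$ as parameters, which transfers from $\mathcal{M}$ to $\mathcal{U}$-almost every $\mathcal{M}_\alpha$ by \L o\'s's theorem --- and this is precisely $k$-compatibility of $(A_\alpha, c_\alpha)$ in $M_\alpha$.

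With $R_\alpha$-freeness of $V_\alpha$ and $k$-compatibility of $(A_\alpha, c_\alpha)$ both secured at $\mathcal{U}$-almost every $\alpha$, the existentially closed hypothesis on each such $M_\alpha$ combined with \cref{existentially closed} yields the required witnesses $b_\alpha'$; for the remaining $\alpha$ (a $\mathcal{U}$-null set) pick $b_\alpha'$ arbitrarily. One final application of \L o\'s's theorem then produces the desired $b' \in M^s$ and verifies the conditions on $a = Ab' + c$, completing the verification of the characterization from \cref{existentially closed}.
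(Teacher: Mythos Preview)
Your proposal is correct and follows essentially the same approach as the paper: verify the characterization of \cref{existentially closed}, pass the data to representatives, use \cref{L:ultraproduct R-free} for $R_\alpha$-freeness, observe that $k$-compatibility is first-order in $L_{Q,P}$ so transfers by \L o\'s, apply the hypothesis coordinatewise, and reassemble. Your explanation of why $k$-compatibility transfers (quantification over $R$ becomes quantification over the predicate $Q$) is in fact more detailed than the paper's, which simply asserts that being $k$-compatible is first-order expressible in $L_{Q,P}$.
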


\begin{proof}
    We check that the conditions of \cref{existentially closed} hold. We are given an $R$-free variety $V\subseteq M^s$, which, by \cref{enough to consider irreducible}, we may assume irreducible. 
    Let $0\le k\le n$ and let $(A,c)$ be a $k$-compatible pair, where $A\in \Mat_{n\times s}(R)$, $c\in M^n$. 
    We need to find $b\in V$ such that for $a=Ab+c$ we have $a_0,..,a_{k-1}\in P_M$ and $a_k,..,a_{n-1}\notin P_M$.
    
    Suppose $V$ is defined by a conjunction of polynomial equations $\phi(x;d)$.
    Let $(d^\alpha)_{\alpha\in I}$ be a representative sequence of $d$, and define $V_\alpha=\phi(M_\alpha;d^\alpha)\subseteq M_\alpha^s$.
    By \cref{L:ultraproduct R-free}, $V_\alpha$ is an $R$-free variety for $\mathcal{U}$-almost all $\alpha\in I$.
    
    Let $(A^\alpha)_{\alpha\in I}$ and $(c^\alpha)_{\alpha\in I}$ be representative sequences of $A$ and $c$ respectively.
    Being a $k$-compatible pair is first-order expressible in $L_{Q,P}$, so for $\mathcal{U}$-almost all $\alpha\in I$, $(A^\alpha,c^\alpha)$ is a $k$-compatible pair.
    We assumed that the $M_\alpha$'s are existentially closed, so for $\mathcal{U}$-almost all $\alpha\in I$, by \cref{existentially closed} we can find $b^\alpha\in V_\alpha$ such that for $a^\alpha=A^\alpha b^\alpha+c^\alpha$, $a^\alpha_0,..,a^\alpha_{k-1}\in P_{M_\alpha}$ and $a^\alpha_k,..,a^\alpha_{n-1}\notin P_{M_\alpha}$.
    Let $b\in M$ be the equivalence class of $(b^\alpha)_{\alpha\in I}$ (this sequence may be defined for only $\mathcal{U}$-almost all $\alpha\in I$), and let $a=Ab+c$.
    We have $b\in V$, $a_0,..,a_{k-1}\in P_M$ and $a_k,..,a_{n-1}\notin P_M$, as needed. 
\end{proof}

\begin{question}
    In \cite[Example 5.14]{delbee2022subfield}, the theory of the pair $(M,K)$, where $M$ is an algebraically closed field and $K$ is a pseudo-finite subfield, was shown to be simple. 
    The structure $\mathcal{M}$ in the language $L_{Q,P}$ is an expansion of the pair $(M,K)$ with a predicate for a $K$-vector subspace. Note that by \cref{ec tp2}, $\mathrm{Th}(M,P_M)$ is TP$_2$, in particular  $\mathrm{Th}(\mathcal{M})$ is not simple. 
    Is it NSOP$_1$?
    
    This question is different from the result in \cref{ec nsop1}, that the class of existentially closed modules of $\mathrm{F}_{K-module}$ is NSOP$_1$, as here we have a predicate for $K$ instead of constants.
\end{question}

\subsection{Amalgamation bases}
\begin{proposition}\label{amalgamation_bases}
    The amalgamation bases of F$_{R\text{-module}}$ are precisely the models of ACF$_{R\text{-module}}$, algebraically closed fields with an $R$-submodule.
    Furthermore, they are disjoint amalgamation bases.
\end{proposition}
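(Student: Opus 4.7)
My plan is to prove both inclusions. For the forward direction (models of $\mathrm{ACF}_{R\text{-module}}$ are disjoint amalgamation bases), take $A\vDash \mathrm{ACF}_{R\text{-module}}$ and embeddings $f_i:A\hookrightarrow B_i$ into models of $\mathrm{F}_{R\text{-module}}$. I would first amalgamate the underlying fields: because $A$ is algebraically closed (hence perfect and relatively algebraically closed in every extension), each $B_i/A$ is a regular extension and $B_1\otimes_A B_2$ is an integral domain. Letting $C$ be its fraction field yields $A$-embeddings $g_i:B_i\to C$ with linearly disjoint images over $A$, so in particular $g_1(B_1)\cap g_2(B_2)=A$. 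Then set $P_C := g_1(P_{B_1})+g_2(P_{B_2})$, an $R$-submodule of $C$, and verify $P_C\cap g_i(B_i) = g_i(P_{B_i})$ using \cref{R-modules intersection}: it gives
\[
g_1(B_1)\cap(g_1(P_{B_1})+g_2(P_{B_2})) = g_1(P_{B_1}) + (g_1(B_1)\cap g_2(P_{B_2})),
\]
and the last intersection lies in $g_1(B_1)\cap g_2(B_2)=A$, hence in $A\cap g_2(P_{B_2}) = P_A \subseteq g_1(P_{B_1})$.

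For the converse, I argue by contrapositive: if $A$ is not algebraically closed, I exhibit two extensions that cannot be amalgamated. Pick $\alpha\in\overline{A}\setminus A$ with minimal polynomial $p$ of degree $\ge 2$, and let $K$ be the normal closure of $A(\alpha)$ over $A$ (the splitting field of $p$). Define $B_1 := K$ with $P_{B_1} := P_A + R\alpha$ and $B_2 := K$ with $P_{B_2} := P_A$. Since $A$ is a field containing $R$, $R\alpha \cap A = \{0\}$, so $P_{B_1}\cap A = P_A$, making both $B_i$ legitimate extensions of $A$ in $\mathrm{F}_{R\text{-module}}$.

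Now suppose for contradiction that $C$ amalgamates $B_1$ and $B_2$ via $g_i:B_i\to C$ fixing $A$. The key point is that since $K/A$ is normal, $g_1(K)=g_2(K)=:L$: each embedding sends the distinct roots of $p$ in $K$ injectively to distinct roots of $p$ in $C$, and $p$ has at most as many distinct roots in $C$ as in $\overline{A}$, so both images equal the subfield of $C$ generated by all roots of $p$. The embedding condition then gives $P_C\cap L = g_1(P_{B_1}) = P_A + R\, g_1(\alpha)$ and simultaneously $P_C\cap L = g_2(P_{B_2}) = P_A$; thus $R\, g_1(\alpha)\subseteq P_A$, so (taking $r=1$) $g_1(\alpha)\in A$, contradicting $\deg p\ge 2$.

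The main obstacle is the equality $g_1(K)=g_2(K)$, which crucially uses the normality of $K/A$; without it, different embeddings could map $K$ onto different conjugate subfields of $C$ and the incompatible $P$-structures could coexist. The remaining verifications are routine $R$-module bookkeeping powered by \cref{R-modules intersection}.
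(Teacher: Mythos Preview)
Your proof is correct and follows essentially the same approach as the paper's. The forward direction is identical in spirit: the paper phrases the field amalgamation as ``$M_1\forkindep^{\ACF}_M M_2$ in $N$'' while you construct it explicitly via $\fr(B_1\otimes_A B_2)$, and the verification that $P_C\cap g_i(B_i)=g_i(P_{B_i})$ using \cref{R-modules intersection} is the same computation. For the converse, the paper takes $M_1=M_2=\overline{M}$ rather than a finite normal extension $K$, but the crux is the same: the images $g_1(M_1)$ and $g_2(M_2)$ coincide inside the amalgam (in the paper because the algebraic closure of $M$ in $N$ is unique, in your version because the splitting field of $p$ over $A$ inside $C$ is unique), forcing the two incompatible $P$-structures to live on the same set. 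Your use of a finite normal extension is a harmless variation and your justification of $g_1(K)=g_2(K)$ via counting roots is a bit more explicit than the paper's one-line ``$\mathrm{Im}(g_1)=\overline{M}$''.
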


\begin{proof}
    Let $M$ be an algebraically closed fields with an $R$-submodule, and $f_1:M\to M_1$, $f_2:M\to M_2$ be embeddings of fields with $R$-submodules.
    There is an algebraically closed  field $N$ and field embeddings $g_1:M_1\to N$, $g_2:M_2\to N$ such that $g_1\circ f_1=g_2\circ f_2$ and $M_1\forkindep^{\ACF}_M M_2$ in $N$, where we identify the fields with their images under the embeddings.
    In particular, $M_1\cap M_2=M$, because $M$ is algebraically closed.  
    
    Give $N$ an $L_{R;P}$ structure by defining $P_N=P_{M_1}+P_{M_2}$.
    To show that $M_1$, $M_2$ are $L_{R;P}$ substructures of $N$, we need to show that $M_1\cap P_N=P_{M_1}$, and $M_2\cap P_N=P_{M_2}$ will follow from symmetry.
    By \Cref{R-modules intersection}, $M_1\cap P_N=M_1\cap (P_{M_1}+P_{M_2})=P_{M_1}+(M_1\cap P_{M_2})$, and we have $M_1\cap P_{M_2}=M_1\cap M_2\cap P_{M_2}=M\cap P_{M_2}=P_M$, so $M_1\cap P_N=P_{M_1}+P_M=P_{M_1}$. 
    Thus, $M$ is an amalgamation base, and from $M_1\cap M_2=M$ it is a disjoint amalgamation base.
    
    Suppose $M\vDash \mathrm{F}_{R\text{-module}}$ is not algebraically closed.
    There is an element $a\in \overline{M}\setminus M$. 
    Let $M_1=M_2=\overline{M}$, but define $P_{M_1}=P_M$, $P_{M_2}=P_M+\vect{a}_R$.
    Note that $P_{M_2}\cap M=P_M$,  because if $p+ra\in (P_M+\vect{a}_R)\cap M$, then $ra\in M$, so $r=0$, which implies $p+ra=p\in P_M$; thus $M\subseteq M_2$ is an $L_{R;P}$-substructure.
    Suppose we could amalgamate $M_1$, $M_2$ to a model $N\vDash \mathrm{F}_{R\text{-module}}$ by embeddings $g_1:M_1\to N$, $g_2:M_2\to N$, such that $g_1|_M=g_2|_M$.
    By changing $N$, we can assume that $g_2$ is an inclusion $M_2\subseteq N$, and in particular $a\in P_{M_2}\subseteq P_N$.
    However, $M_1=\overline{M}$, so $\textrm{Im}(g_1)=\overline{M}$ and there is some $b\in M_1$ such that $g_1(b)=a$.
    In particular, $b\in P_{M_1}=P_M$. 
    This would imply $a=g_1(b)=b\in M$, as $g_1|_M=\mathrm{id}_M$, a contradiction.
    Thus, $M$ is not an amalgamation base.
\end{proof}

\section{Classification}

\subsection{TP$_2$}
We will construct a formula that is TP$_2$ in every JEP refinement of F$_{R\text{-module}}$, as per \Cref{definition tp2 exist}. 
In particular, this will prove that for every JEP refinement $T$, $\mathcal{EC}(T)$ is \emph{not} NTP$_2$.

\begin{lemma}\label{infinite_index}
    If $M\vDash \mathrm{F}_{R\text{-module}}$ is existentially closed, then the index $[M:P_M]=\infty$.
\end{lemma}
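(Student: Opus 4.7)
My plan is to argue by contradiction. Suppose $[M:P_M] = N < \infty$; then by pigeonhole no $N+1$ elements of $M$ can lie in pairwise distinct additive cosets of $P_M$. I will use \cref{existentially closed} to produce exactly such $N+1$ elements, yielding a contradiction.

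Concretely, take $s = N+1$ and let $V = \mathbb{A}^{s}$, viewed as a variety over $M$. Then $V$ is $R$-free, because a generic point of $\mathbb{A}^{s}$ over $M$ is a tuple of algebraically independent transcendentals, and algebraic independence over $M$ forces $R$-linear independence over $M$: a nontrivial relation $\sum r_i t_i \in M$ with some $r_i \neq 0$ in $R \subseteq M$ could be divided (in the field $M$) by $r_i$ to yield a nontrivial polynomial relation. Next set $n = \binom{N+1}{2}$ and let $A \in \Mat_{n \times s}(R)$ have rows indexed by pairs $0 \le i < j \le N$, with $A_{(i,j)} := e_i - e_j \in R^{s}$. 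Take $c = 0 \in M^n$ and $k = 0$.

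Then $(A,c)$ is trivially $0$-compatible: condition (1) is vacuous (empty sums), and condition (2) reduces to each row being nonzero, since $c_{(i,j)} = 0$ makes the alternative ``$\notin P_M$'' clause fail; this holds as $i \neq j$. Applying \cref{existentially closed} (with all outputs required to lie outside $P_M$, so no index is in $P_M$) yields $b = (b_0, \ldots, b_N) \in M^{N+1}$ with $b_i - b_j = (Ab)_{(i,j)} \notin P_M$ for all $i < j$, exhibiting $N+1$ pairwise distinct cosets of $P_M$ — contradicting $[M:P_M] = N$.

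There is no real obstacle: the only conceptual ingredient is choosing the matrix $A$ of difference-vectors so that the resulting existence statement directly encodes ``the $b_i$'s separate cosets'', after which the $k$-compatibility bookkeeping collapses because $k = 0$ and $c = 0$.
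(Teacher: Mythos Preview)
Your proof is correct, but it takes a different and heavier route than the paper's.

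The paper argues directly from the definition of existential closedness: take any field extension $M' \supseteq M$ large enough that $[M':P_M] = \infty$, endow $M'$ with the $L_{R;P}$-structure $P_{M'} = P_M$, observe that $M$ is then an $L_{R;P}$-substructure of $M'$, and pull back the existential sentences ``there exist $n$ elements in pairwise distinct $P$-cosets'' from $M'$ to $M$. This is a three-line argument that uses nothing beyond the raw definition.

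Your approach instead invokes \cref{existentially closed}, the full characterization of existentially closed models, and encodes the desired witnesses as the output of that theorem applied to $V = \mathbb{A}^{N+1}$ with the difference-matrix $A$. This is perfectly valid, and the bookkeeping (that $\mathbb{A}^{N+1}$ is $R$-free, that $(A,0)$ is $0$-compatible) is all correct; one minor quibble is that condition~(1) is not literally vacuous when $k=0$ --- the empty tuple still instantiates the universal quantifier --- but it is trivially satisfied since the empty sum is $0 \in P_M$. The trade-off is clear: the paper's proof is self-contained and shows the lemma is logically independent of the characterization theorem, while yours demonstrates how \cref{existentially closed} can be used as a black box to read off such facts without constructing extensions by hand.
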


\begin{proof}
    If $M'\supseteq M$ is a large enough field extension, then in particular $[M':P_M]=\infty$.
    Consider the $L_{R;P}$-structure on $M'$ given by $P_{M'}=P_M$, $M$ is an $L_{R;P}$-substructure of $M'$.
    The fact that $[M':P_{M'}]=\infty$ can be expressed by existential sentences ``there exist at least $n$ elements in different $P$-cosets'' for every $n$, so by existential closedness we have $[M:P_M]=\infty$.
\end{proof}

\begin{theorem} \label{ec tp2}
    Let $T$ be some JEP refinement of F$_{R\text{-module}}$. 
    The formula $\phi(x;y,z)=y\cdot x+z\in P$ has $TP_2$ with respect to $\EC(T)$.
\end{theorem}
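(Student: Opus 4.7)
The plan is to exhibit parameters $a_{i,j} = (y_i, z_j) \in A$ in an amalgamation base $A\in \EC(T)$, together with the quantifier-free (hence existential) formula
\[
\psi(y_1, z_1; y_2, z_2) := (y_1 = y_2) \wedge (z_1 - z_2 \notin P),
\]
so that the first coordinate of $a_{i,j}$ depends only on the row and the second only on the column. Clauses~(2) and~(3) of \Cref{definition tp2 exist} will then be immediate: for~(2), if $\psi$ holds and both $y_1 x + z_1, y_2 x + z_2 \in P$, then $y_1 = y_2$ forces $z_1 - z_2 \in P$, contradicting $\psi$; clause~(3) will follow from the choice of $(z_j)$.

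For the data, take $A$ to be a sufficiently large model in $\EC(T)$ (for instance the monster $\mathcal{M}$); by \Cref{amalgamation_bases} it is an algebraically closed field with $R$-submodule, hence an amalgamation base, and by \Cref{infinite_index} the index $[A:P_A]$ is infinite. Choose $(z_j)_{j<\omega}\subseteq A$ in pairwise distinct $P_A$-cosets, ensuring clause~(3), and choose $(y_i)_{i<\omega}\subseteq A$ algebraically independent over $\fr(R)$; this makes $\{1\}\cup\{y_i : i<\omega\}$ an $R$-linearly independent subset of $A$.

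The substantive step is clause~(1): for every $\sigma\in\omega^\omega$ the set $\{y_i x + z_{\sigma(i)}\in P : i<\omega\}$ is consistent. Working in the monster $\mathcal{M}$ of $T$, which is existentially closed for $\mathrm{F}_{R\text{-module}}$ since $T$ is a JEP-refinement, saturation reduces this to realising each finite subset $\{y_i x + z_{\sigma(i)}\in P : i<n\}$ inside $\mathcal{M}$. For this I will apply \Cref{existentially closed} to the one-dimensional variety
\[
V = \{(t, y_0 t, \ldots, y_{n-1} t) : t\in\mathcal{M}\}\subseteq\mathcal{M}^{n+1},
\]
which is $R$-free because its generic point (with $t$ transcendental over $\mathcal{M}$) satisfies no nontrivial $R$-linear relation with value in $\mathcal{M}$ — such a relation would amount to $R$-dependence among $1, y_0, \ldots, y_{n-1}$, which was ruled out. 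Take the matrix $A_0\in\Mat_{n\times(n+1)}(R)$ whose rows are the standard basis vectors $e_1,\ldots,e_n$ (picking out the last $n$ coordinates of $b\in V$), the shift vector $c = (z_{\sigma(0)}, \ldots, z_{\sigma(n-1)}) \in \mathcal{M}^n$, and $k = n$. The pair $(A_0, c)$ is $n$-compatible: the first clause of $k$-compatibility is vacuous because the rows of $A_0$ are $R$-linearly independent, and the second clause is vacuous since $k = n$. Then \Cref{existentially closed} yields $b\in V(\mathcal{M})$ with every coordinate of $A_0 b + c$ in $P_\mathcal{M}$, so $x := b_0$ witnesses $y_i x + z_{\sigma(i)}\in P_\mathcal{M}$ for all $i < n$. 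The principal technical check is the $R$-freeness of $V$; everything else is bookkeeping.
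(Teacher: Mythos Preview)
Your proposal is correct and follows essentially the same approach as the paper: the same inconsistency witness $\psi$, the same choice of row parameters $y_i$ with $1,y_0,y_1,\ldots$ $R$-linearly independent and column parameters $z_j$ in distinct $P$-cosets, and the same application of \Cref{existentially closed} to the variety $\{(t,y_0t,\ldots,y_{n-1}t)\}$ with the projection matrix onto the last $n$ coordinates. The only cosmetic differences are that you require the $y_i$ to be algebraically independent (stronger than the $\fr(R)$-linear independence the paper uses, but harmless) and that you invoke saturation of the monster for clause~(1) rather than realising the finite fragments already in the chosen amalgamation base.
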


\begin{proof}
    Take the formula $\psi(y_1,z_1;y_2,z_2)$ to be $y_1=y_2\land z_1-z_2\notin P$. 
    Let $M\vDash T$ be existentially closed such that $\abs{M}>\abs{R}+\aleph_0$, in particular it is an amalgamation base, and it is existentially closed in F$_{R\text{-module}}$. 
    The fact that $\abs{M}>\abs{R}+\aleph_0$ implies in particular that $\dim_{\fr(R)}(M)\geq\aleph_0$, so there are $ \beta_1, \beta_2,\dots\in M$ such that $1, \beta_1, \beta_2,\dots$ are $\fr(R)$-linearly independent, and in particular $R$-linearly independent. 
    By \Cref{infinite_index}, $[M:P_M]=\infty$, so there are $\gamma_1, \gamma_2,\dots\in M$ that are all in different $P_M$-cosets. 
    Take the sequence of tuples $a_{ij}=(\beta_i, \gamma_j)$. 
    Conditions (2) and (3) of \cref{definition tp2 exist} obviously hold, it remains to show (1). 
    Let $\sigma\in (\omega\setminus\set{0})^{\omega\setminus\set{0}}$, by compactness it is enough to show that for every $n>0$, $\bigwedge_{i=1}^n( \beta_i\cdot x+ \gamma_{\sigma(i)}\in P)$ is consistent.
    
    Consider the variety $V(x_0,x_1,\dots,x_n)$ given by
    \begin{align*}
        x_1&= \beta_1 x_0, \\
        &\vdots\\
        x_n&= \beta_n x_0.
    \end{align*}
    Let $b'=(b'_0, \beta_1 b'_0,\dots, \beta_n b'_0)$ be a generic point of $V$ in some field extension.
    Suppose that for $r_0,\dots,r_n\in R$ we have 
    $$r_0b'_0+r_1 \beta_1 b'_0+\dots+r_n \beta_n b'_0\in M,$$
    $$(r_0 +r_1 \beta_1+\dots+r_n \beta_n)  b'_0\in M,$$
    then $r_0+r_1 \beta_1+\dots+r_n \beta_n=0$, else we would get $b'_0\in M$.
    But $1, \beta_1,\dots, \beta_n$ are $R$-linearly independent, so we must have $r_0=\dots=r_n=0$, thus $V$ is $R$-free.
    Consider the $n\times(n+1)$ matrix
    $$ A =
    \begin{pmatrix}
        0       & 1      &  & 0 \\
        \vdots  & & \ddots &  \\
        0       & 0      &  & 1
    \end{pmatrix}$$
    and the tuple $c=(\gamma_{\sigma(1)},\dots,\gamma_{\sigma(n)})$.
    We claim that $A$ and $c$ are $n$-compatible.
    It is enough to see that the matrix $A$ is of rank $n$, so if $r_1A_1+\dots+r_nA_n=0$, then $r_1=\dots=r_n=0$.
    From \Cref{existentially closed}, it follows that there is a point $b=(b_0, \beta_1  b_0,\dots, \beta_n b_0)\in V$ such that for 
    $$Ab+c=(\beta_1 b_0+\gamma_{\sigma(1)},\dots, \beta_n b_0+\gamma_{\sigma(n)})$$
    we have $ \beta_i  b_0+\gamma_{\sigma(i)}\in P_M$, as needed.
\end{proof}

\begin{remarkcnt}
    The above implies in particular that every JEP refinement of F$_{R\text{-module}}$ is non-simple, by \cite[Proposition A.5]{Haykazyan_2021}, where the definition for simplicity in the category of existentially closed models is given in \cite[Definition A.4]{Haykazyan_2021}.
\end{remarkcnt}

\subsection{NSOP$_1$}
We will show that every for every JEP refinement $T$ of F$_{R\text{-module}}$, $\mathcal{EC}(T)$ is NSOP$_1$, using \Cref{nsop1 condition exist}.
The independence relation that we will use was defined in \cite[Definition 3.1]{d_Elb_e_2021_generic}, and called weak independence.
We will present the definition for our specific case.

\begin{definition}
    For a model $M\vDash \ACF_{R\text{-module}}$, and subsets $A,B,C\subseteq M$, say that $A$ and $B$ are \emph{weakly independent} over $C$, and denote $A\forkindep^{w}_C B$, if $A\forkindep_C^\ACF B$ and $P_M\cap(\overline{AC}+\overline{BC})=P_M\cap \overline{AC}+P_M\cap \overline{BC}$.    
\end{definition}

\begin{remarkcnt} \label{remark obvious}
    The inclusion $P_M\cap \overline{AC}+P_M\cap \overline{BC}\subseteq P_M\cap(\overline{AC}+\overline{BC})$ is always true.
\end{remarkcnt}

\begin{lemma}[$3$-amalgamation] \label{lemma_indep_3_amalgamation}
$\ACF_{R\text{-module}}$ has $3$-amalgamation, meaning any weakly independent $\mathcal{P}^-(3)$-system of $\ACF_{R\text{-module}}$ can be completed to a weakly independent $\mathcal{P}(3)$-system.
\end{lemma}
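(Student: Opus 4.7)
The plan is to construct the top model $F_{\{0,1,2\}}$ in two layers. Since weak independence refines ACF-independence, the given system is in particular an ACF-independent $\mathcal{P}^-(3)$-system of algebraically closed fields, and \cref{ACF n amalgamation} extends it (at the field level) to an ACF-independent $\mathcal{P}(3)$-system with a new top field $F_{\{0,1,2\}}$. Identifying each $F_a$ with its image in this field, we equip $F_{\{0,1,2\}}$ with the $R$-submodule
$$P:=P_{F_{\{0,1\}}}+P_{F_{\{0,2\}}}+P_{F_{\{1,2\}}}.$$
It then remains to verify (a) that $P\cap F_{\{i,j\}}=P_{F_{\{i,j\}}}$ (so that the given maps become $L_{R;P}$-embeddings), and (b) the new weak-independence conditions at $a=\{0,1,2\}$, namely $F_{\{i,j\}}\forkindep^w_{F_{\{i\}}F_{\{j\}}} F_{\{i,k\}}F_{\{j,k\}}$ and $F_{\{i\}}\forkindep^w_{F_\emptyset} F_{\{j,k\}}$, whose ACF parts come from step~1.

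For (a), by symmetry take $\{i,j\}=\{0,1\}$. By \cref{R-modules intersection} it suffices to show that any $q+r\in F_{\{0,1\}}$ with $q\in P_{F_{\{0,2\}}}$ and $r\in P_{F_{\{1,2\}}}$ lies in $P_{F_{\{0,1\}}}$. Applying \cref{fact shelah} (with $t=\{0,1\}$) to the identity $q+r=q+r$ descends it to $q+r=y_0+y_1$ with $y_0\in F_{\{0\}},\,y_1\in F_{\{1\}}$; applying the same fact (with $t=\{2\}$) identifies $z:=q-y_0=y_1-r$ as an element of $F_{\{0,2\}}\cap F_{\{1,2\}}=F_{\{2\}}$. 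The hypothesised pairwise weak independences $F_{\{0\}}\forkindep^w_{F_\emptyset} F_{\{2\}}$ in $F_{\{0,2\}}$ and $F_{\{1\}}\forkindep^w_{F_\emptyset} F_{\{2\}}$ in $F_{\{1,2\}}$ then furnish decompositions $q=y_0^*+z^*$ and $r=y_1^*+u^*$ with $y_i^*\in P_{F_{\{i\}}}$ and $z^*,u^*\in P_{F_{\{2\}}}$. Comparing the two expressions for $q$ (respectively $r$) produces correction terms $w,w'\in F_\emptyset$ whose sum equals $z^*+u^*\in P_{F_{\{2\}}}\cap F_\emptyset=P_{F_\emptyset}$, giving $q+r=y_0^*+y_1^*+(w+w')\in P_{F_{\{0,1\}}}$.

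For (b), the case $b=\{i,j\}$ is immediate from the definition of $P$ together with (a): writing $m\in P\cap(F_{\{0,1\}}+\overline{F_{\{0,2\}}F_{\{1,2\}}})$ as $p+q+r$, the split $p\mid q+r$ puts each summand in the correct ``intersection with $P$''. The case $b=\{i\}$ (say $\{0\}$) requires a similar bookkeeping argument: given $m=p+q+r=\alpha+\beta\in F_{\{0\}}+F_{\{1,2\}}$, \cref{fact shelah} provides $(F_{\{0,1\}}+F_{\{0,2\}})\cap F_{\{1,2\}}=F_{\{1\}}+F_{\{2\}}$ and $F_{\{0,1\}}\cap F_{\{0,2\}}=F_{\{0\}}$, which together allow one to rewrite $p=\gamma_1+\delta$ and $q=(\alpha-\delta)+\gamma_2$ as sums of singleton-field elements; pairwise weak independence in $F_{\{0,1\}}$ and $F_{\{0,2\}}$ then upgrades these to $P$-decompositions $p=\delta^*+\gamma_1^*$ and $q=\phi^*+\gamma_2^*$, and grouping terms yields $m=(\delta^*+\phi^*)+(\gamma_1^*+\gamma_2^*+r)\in P_{F_{\{0\}}}+P_{F_{\{1,2\}}}$. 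The main obstacle throughout is precisely this bookkeeping: systematically combining the Shelah-style descent of \cref{fact shelah} with the given pairwise weak independences to promote field-theoretic decompositions to decompositions inside $P$, while absorbing the residual correction terms into $P_{F_\emptyset}$.
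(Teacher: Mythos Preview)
Your proof is correct and follows the same overall architecture as the paper: complete the system at the ACF level via \cref{ACF n amalgamation}, set $P$ to be the sum of the three $P_{F_{\{i,j\}}}$, then verify (a) that this gives $L_{R;P}$-embeddings and (b) the new weak-independence conditions.

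The only noteworthy difference is tactical, in how \cref{fact shelah} is deployed. For part~(a), say for $\{i,j\}=\{0,1\}$, the paper applies \cref{fact shelah} separately with $t=\{0,2\}$ and $t=\{1,2\}$, i.e.\ with $t$ equal to the set \emph{containing} each summand $q\in P_{F_{\{0,2\}}}$ and $r\in P_{F_{\{1,2\}}}$. This immediately yields $q\in F_{\{0\}}+F_{\{2\}}$ and $r\in F_{\{1\}}+F_{\{2\}}$, after which weak independence gives $q+r\in P_{F_{\{0\}}}+P_{F_{\{1\}}}+P_{F_{\{2\}}}$, and a single application of \cref{R-modules intersection} together with $F_{\{2\}}\cap F_{\{0,1\}}=F_\emptyset$ finishes. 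Your route---applying \cref{fact shelah} with $t=\{0,1\}$ to descend the \emph{sum} $q+r$, then using field intersections to locate $z=q-y_0=y_1-r$ in $F_{\{2\}}$---also works but forces you to track the correction terms $w,w'\in F_\emptyset$ explicitly. (Incidentally, you do not need \cref{fact shelah} for $F_{\{0,2\}}\cap F_{\{1,2\}}=F_{\{2\}}$; this follows directly from ACF-independence over the algebraically closed $F_{\{2\}}$.) The same remark applies to your case $b=\{i\}$ in~(b), where the paper again isolates each $p$ and $q$ individually, while your argument threads through $(F_{\{0,1\}}+F_{\{0,2\}})\cap F_{\{1,2\}}=F_{\{1\}}+F_{\{2\}}$ and $F_{\{0,1\}}\cap F_{\{0,2\}}=F_{\{0\}}$. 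Both are valid; the paper's choice of $t$ simply avoids the extra bookkeeping.
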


\begin{proof}
    Suppose that $M=\set{M_s}_{s\in \mathcal{P}^-(3)}$ is a weakly independent $\mathcal{P}^-(3)$-system of $\ACF_{R\text{-module}}$, and denote $P_s=P_{M_s}$.
    By \Cref{ACF n amalgamation} there is some algebraically closed field $M_3$ that completes $M$ as an independent system of ACF.
    By embedding all the system in $M_3$, we can assume that the embeddings are inclusions. 
    Define $P_3=P_{\hat{0}}+P_{\hat{1}}+P_{\hat{2}}$, where $\hat{i}=3\setminus\set{i}$, and consider $(M_3,P_3)$ as a model of $\ACF_{R\text{-module}}$.
    We need to show that $M_3$ is an $L_{R;P}$-extension of the rest of the system, that is that $P_3\cap M_{\hat{i}}=P_{\hat{i}}$.
    By symmetry it is enough to prove for $i=0$. 
    
    By \Cref{R-modules intersection}, $P_3\cap M_{\hat{0}}=(P_{\hat{0}}+P_{\hat{1}}+P_{\hat{2}})\cap M_{\hat{0}}=P_{\hat{0}}+(P_{\hat{1}}+P_{\hat{2}})\cap M_{\hat{0}}$, so it is enough to prove $(P_{\hat{1}}+P_{\hat{2}})\cap M_{\hat{0}}\subseteq P_{\hat{0}}$.
    Let $m_{\hat{0}}\in (P_{\hat{1}}+P_{\hat{2}})\cap M_{\hat{0}}$, there are $p_{\hat{1}}\in P_{\hat{1}}$, $p_{\hat{2}}\in P_{\hat{2}}$ such that $m_{\hat{0}}=p_{\hat{1}}+p_{\hat{2}}$.
    \Cref{fact shelah} applied for $t=\hat{1}$ implies that there exists $m_{\set{0}}\in M_{\set{0}}$, $m_{\set{2}}\in M_{\set{2}}$ such that $m_{\set{2}}=p_{\hat{1}}+m_{\set{0}}$, in particular $p_{\hat{1}}\in M_{\set{0}}+M_{\set{2}}$. 
    However, by weak independence in $M_{\hat{1}}$, $P_{\hat{1}}\cap (M_{\set{0}}+M_{\set{2}})=P_{\set{0}}+P_{\set{2}}$, so $p_{\hat{1}}\in P_{\set{0}}+P_{\set{2}}$.
    Similarly, by applying \Cref{fact shelah} for $t=\hat{2}$, we get $p_{\hat{2}}\in P_{\set{0}}+P_{\set{1}}$.
    Altogether, $m_{\hat{0}}=p_{\hat{1}}+p_{\hat{2}}\in P_{\set{0}}+P_{\set{1}}+P_{\set{2}}$.
    By \Cref{R-modules intersection}, $(P_{\set{0}}+P_{\set{1}}+P_{\set{2}})\cap M_{\hat{0}}=P_{\set{0}}\cap M_{\hat{0}}+(P_{\set{1}}+P_{\set{2}})$, but $P_{\set{1}}+P_{\set{2}}\subseteq P_{\hat{0}}$, so it is enough to prove $P_{\set{0}}\cap M_{\hat{0}}\subseteq P_{\hat{0}}$.
    ACF-independence of the $\mathcal{P}(3)$-system implies that $M_{\set{0}}\forkindep^\ACF_{M_\emptyset} M_{\hat{0}}$ in $M_3$, and $M_\emptyset$ is algebraically closed so $M_{\set{0}}\cap M_{\hat{0}}=M_\emptyset$.
    Thus, 
    $$P_{\set{0}}\cap M_{\hat{0}}=P_{\set{0}}\cap M_{\set{0}}\cap M_{\hat{0}}=P_{\set{0}}\cap M_\emptyset=P_\emptyset\subseteq P_{\hat{0}}.$$
    
    It remains to show that the system is weakly independent. 
    By symmetry, there are only two general cases we need to check
    \begin{enumerate}
        \item $M_{\hat{0}}\forkindep^w_{M_{\set{1}}M_{\set{2}}} M_{\hat{1}}M_{\hat{2}}$,
        \item $M_{\hat{0}}\forkindep^w_{M_\emptyset} M_{\set{0}}$.
    \end{enumerate}
    Because the system is ACF-independent, we already have $M_{\hat{0}}\forkindep^\ACF_{M_{\set{1}}M_{\set{2}}} M_{\hat{1}}M_{\hat{2}}$, $M_{\hat{0}}\forkindep^\ACF_{M_\emptyset} M_{\set{0}}$.
    For the first case, notice that $P_3\cap M_{\hat{0}}=P_{\hat{0}}$, $P_3\cap \overline{M_{\hat{1}}M_{\hat{2}}}\supseteq P_{\hat{1}}+P_{\hat{2}}$, so
    $$P_3\cap M_{\hat{0}}+P_3\cap \overline{M_{\hat{1}}M_{\hat{2}}}\supseteq P_{\hat{0}}+P_{\hat{1}}+P_{\hat{2}}=P_3\supseteq P_3\cap (M_{\hat{0}}+\overline{M_{\hat{0}}M_{\hat{1}}})$$ 
    where the other inclusion is obvious (\Cref{remark obvious}).
    For the second case, by \Cref{R-modules intersection} $P_3\cap(M_{\hat{0}}+M_{\set{0}})=(P_{\hat{0}}+P_{\hat{1}}+P_{\hat{2}})\cap (M_{\hat{0}}+M_{\set{0}})=P_{\hat{0}}+(P_{\hat{1}}+P_{\hat{2}})\cap (M_{\hat{0}}+M_{\set{0}})$, so it is enough to show $(P_{\hat{1}}+P_{\hat{2}})\cap (M_{\hat{0}}+M_{\set{0}})\subseteq P_{\hat{0}}+P_{\set{0}}$.
    Let $p_{\hat{1}}+p_{\hat{2}}\in (P_{\hat{1}}+P_{\hat{2}})\cap (M_{\hat{0}}+M_{\set{0}})$, where $p_{\hat{1}}\in P_{\hat{1}}$, $p_{\hat{2}}\in P_{\hat{2}}$. 
    We can write $p_{\hat{1}}+p_{\hat{2}}=m_{\hat{0}}+m_{\set{0}}$, where $m_{\hat{0}}\in M_{\hat{0}}$, $m_{\set{0}}\in M_{\set{0}}$.
    By \Cref{fact shelah} applied for $t=\hat{1}$, there are $m'_{\set{0}}\in M_{\set{0}}$, $m_{\set{2}}\in M_{\set{2}}$ such that $p_{\hat{1}}+m'_{\set{0}}=m_{\set{2}}+m_{\set{0}}$, so $p_{\hat{1}}\in M_{\set{0}}+M_{\set{2}}$. By weak independence in $M_{\hat{1}}$, $P_{\hat{1}}\cap (M_{\set{0}}+M_{\set{2}})=P_{\set{0}}+P_{\set{2}}$, so $p_{\hat{1}}\in P_{\set{0}}+P_{\set{2}}$. Similarly, by applying \Cref{fact shelah} for $t=\hat{2}$, $p_{\hat{2}}\in P_{\set{0}}+P_{\set{1}}$.
    Thus, $p_{\hat{0}}+p_{\hat{2}}\in P_{\set{0}}+P_{\set{1}}+P_{\set{2}}\subseteq P_{\hat{0}}+P_{\set{0}}$.
\end{proof}

\begin{question}
For which $n>3$ does $\ACF_{R\text{-module}}$ have $n$-amalgamation with weak independence?
\end{question}

\begin{lemma}\label{lemma_isolate_acfg}
    Suppose $\mathbb{M}$ is a monster model of a JEP-refinement of F$_{R\text{-module}}$.
    For a singleton $a\in \mathbb{M}$, a tuple $b\in \mathbb{M}$ and $C\subseteq \M$, if $a\in \overline{C(b)}$, then there is a formula $\phi(x,b,c)\in \text{tp}_\exists(a/Cb)$ isolating the type, in the sense that if $a'\vDash \phi(x,b,c)$ then $a'\equiv^\exists_{Cb} a$.
    
    Furthermore, we can choose $\phi(x,b, c)$ in such a way that for any $a',b'\in \M$, $\vDash \phi(a',b',c)$ implies $a'\in \overline{C(b')}$.
\end{lemma}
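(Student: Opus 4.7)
The idea is to exploit the algebraicity of $a$ over $C(b)$ to trap realizations of the desired formula among the finitely many roots of a fixed polynomial, and then separate $a$ from those roots having a different existential type by a single existential formula per bad root. The key engine is \cref{maximal type over set}, which makes each $\tp_\exists(e/Cb)$ a maximal existential type for $e \in \M$. Two distinct maximal existential types $p_1 \neq p_2$ must differ on some existential formula $\psi$, and by maximality we can always pick $\psi \in p_1$ with $\psi$ inconsistent with $p_2$: if every separating formula lay in $p_2 \setminus p_1$, then $p_1 \subsetneq p_2$, contradicting the maximality of $p_1$.

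First, pick a tuple $c_0 \subseteq C$ and a polynomial $p(x, y, c_0)$ of positive degree in $x$ such that $p(a, b, c_0) = 0$ and the leading coefficient $\ell(y, c_0)$ of $p$ in $x$ satisfies $\ell(b, c_0) \neq 0$. If $a \notin C(b)$, take $p$ monic in $x$, namely the minimal polynomial of $a$ over $C(b)$ with denominators cleared; if $a = f(b, c_0)/g(b, c_0) \in C(b)$, take $p(x, y, c_0) = g(y, c_0)\, x - f(y, c_0)$, so $\ell(y, c_0) = g(y, c_0)$. The equation $p(x, b, c_0) = 0$ has at most $n := \deg_x p$ solutions in $\M$, all of them in $\overline{C(b)}$; enumerate them as $a = a_0, a_1, \dots, a_{m-1}$ with $m \le n$.

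For each $i \ge 1$ with $a_i \not\equiv^\exists_{Cb} a$, invoke the separation principle above to produce an existential formula $\psi_i(x, y, c_i)$ with parameters $c_i \subseteq C$ satisfying $\M \vDash \psi_i(a, b, c_i) \wedge \neg \psi_i(a_i, b, c_i)$. Let $c$ collect $c_0$ and all the $c_i$, and define
\[
\phi(x, y, c) \;:=\; \bigl(p(x, y, c_0) = 0\bigr) \wedge \bigl(\ell(y, c_0) \neq 0\bigr) \wedge \bigwedge_{i \ge 1} \psi_i(x, y, c_i).
\]
Then $\phi$ is existential and $\phi(x, b, c) \in \tp_\exists(a/Cb)$ by construction.

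If $a^* \in \M$ satisfies $\phi(x, b, c)$, then $a^*$ is a root of $p(x, b, c_0)$, hence equals some $a_j$; since it satisfies every $\psi_i$, either $j = 0$ or $a_j \equiv^\exists_{Cb} a$, and in either case $a^* \equiv^\exists_{Cb} a$. For the ``furthermore'' clause, if $a', b' \in \M$ satisfy $\phi(a', b', c)$, then $a'$ is a root of the polynomial $p(x, b', c_0) \in C(b')[x]$, which is nonzero because $\ell(b', c_0) \neq 0$, so $a' \in \overline{C(b')}$. The only delicate point is the existence of the separating $\psi_i$'s, handled by the maximality argument above; finiteness of the conjunction follows from the bound $m \le n$.
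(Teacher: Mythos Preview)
Your proof is correct, but takes a different route from the paper's. The paper argues nonconstructively by minimality: since the polynomial equation $q(x,b,c)=0$ lies in $\tp_\exists(a/Cb)$ and has finitely many realizations, one may choose $\phi(x,b,c)\in\tp_\exists(a/Cb)$ with the \emph{minimal} number of realizations. Then $\phi$ implies every formula in the type (otherwise conjoining would drop the count), so any realizer of $\phi$ realizes all of $\tp_\exists(a/Cb)$, and maximality (\cref{maximal type over set}) upgrades this to $\equiv^\exists_{Cb}$. For the ``furthermore'' clause the paper simply conjoins $q(x,y,c)=0\wedge\exists x'\,q(x',y,c)\neq 0$.

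Your argument is more explicit: you list the finitely many roots, use maximality to separate $a$ from each root of a different existential type by a single existential formula, and conjoin. This buys a concrete description of $\phi$ and makes the ``furthermore'' clause fall out immediately from the leading-coefficient conjunct; the paper's minimality trick is shorter and avoids enumerating roots or invoking separation for each one. Both rely on the same key input, namely that $\tp_\exists(a/Cb)$ is a maximal existential type.
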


\begin{proof}
    We have $a\in \overline{C(b)}$, so there is some non-zero polynomial $q(x,b,c)$ with $a$ as a root, where $c\in C$.
    In particular, the formula $q(x,b,c)=0$ belongs to $\text{tp}_\exists(a/Cb)$ and has finitely many realizations. 
    Take some formula $\phi(x,b,c)\in \text{tp}_\exists(a/Cb)$ with a minimal number of realizations, a conjunction of existential formulas is existential, so $\phi(x,b,c)$ must imply every formula in $\text{tp}_\exists(a/Cb)$.
    Let $a'\vDash \phi(x,b,c)$, it follows that $a'\vDash \text{tp}_\exists(a/Cb)$, that is $\text{tp}_\exists(a'/Cb)\supseteq \text{tp}_\exists(a/Cb)$.
    On the other hand, \cref{maximal type over set} says that $\text{tp}_\exists(a/Cb)$ is a maximal existential type, so $\text{tp}_\exists(a'/Cb) = \text{tp}_\exists(a/Cb)$.
    
    For the ``furthermore'' part, we can assume that $\phi(x,y,c)\vdash q(x,y,c)=0 \land \exists x' q(x',y,c)\ne 0$, because for $y=b$ we know it is true, so we can take the conjunction of this formula with $\phi(x,y,c)$ without changing $\phi(x,b,c)$.
    Thus, if $\vDash\phi(a',b',c)$, then in particular $a'$ is a root of the non-zero polynomial $q(x,b',c)$, so $a'\in \overline{C(b')}$.
\end{proof}

\begin{theorem} \label{ec nsop1}
    Suppose $T$ is a JEP-refinement of F$_{R\text{-module}}$, then $\mathcal{EC}(T)$ has $NSOP_1$.
\end{theorem}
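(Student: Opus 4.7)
The plan is to invoke the Kim--Pillay-style criterion of Fact~\ref{nsop1 condition exist} using the weak-independence relation $\forkindep^w$; the theorem then reduces to verifying $\Aut(\M)$-invariance, existence over models, monotonicity, symmetry, strong finite character, and the independence theorem for $\forkindep^w$ in a monster $\M\vDash T$.

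Invariance is immediate from the algebraic definition, and symmetry follows from the symmetry of $\forkindep^\ACF$ together with commutativity of $+$. Existence $a\forkindep^w_M M$ uses that any existentially closed model of $\mathrm{F}_{R\text{-module}}$ is an algebraically closed field (clear from Theorem~\ref{existentially closed} applied to trivial varieties), so $\overline{Ma}+M=\overline{Ma}$ and the module equality becomes the tautology $P\cap\overline{Ma}=P\cap\overline{Ma}+P\cap M$. Monotonicity reduces, via Lemma~\ref{R-modules intersection}, to the corresponding ACF fact together with the observation that $\overline{Maa'}\cap\overline{Mbb'}=M$ whenever $aa'\forkindep^\ACF_M bb'$ (because $M$ is algebraically closed).

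For strong finite character I would split into two cases. If $a\not\forkindep^\ACF_M b$, then some coordinate $a_i$ is algebraic over $Mb$ but not over $M$; the existential formula $q(x_i,b,m)=0$, where $q$ is the minimal polynomial of $a_i$ over $Mb$, does the job, since irreducibility of $q$ over $Mb$ forces all its roots in $\overline{\M}$ to have the same algebraicity status over $M$. If instead $a\forkindep^\ACF_M b$ but the module equality fails, pick a witness $p=\alpha+\beta\in P_\M\cap(\overline{Ma}+\overline{Mb})\setminus(P_\M\cap\overline{Ma}+P_\M\cap\overline{Mb})$, and apply Lemma~\ref{lemma_isolate_acfg} to isolate the existential types of the singletons $\alpha$ over $Ma$ and $\beta$ over $Mb$ by existential formulas $\phi_0(u,a,m)$ and $\phi_1(v,b,m)$. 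The formula $\phi(x,b,m)=\exists u\exists v\bigl(\phi_0(u,x,m)\wedge\phi_1(v,b,m)\wedge u+v\in P\bigr)$ then lies in $\tp_\exists(a/Mb)$, and the ``furthermore'' clause of Lemma~\ref{lemma_isolate_acfg}, combined with the algebraic-intersection argument $\overline{Ma'}\cap\overline{Mb}=M$ (which we may assume, otherwise we are in the previous case), forces every realization $a'$ of $\phi$ to satisfy $a'\not\forkindep^w_M b$.

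The independence theorem is derived from Lemma~\ref{lemma_indep_3_amalgamation}: given $c_1\forkindep^w_M c_2$, $b_1\forkindep^w_M c_1$, $b_2\forkindep^w_M c_2$ and $b_1\equiv^\exists_M b_2$, the algebraically closed hulls inside $\M$ (after using $\tp_\exists(b_1/M)=\tp_\exists(b_2/M)$ to glue the two copies of the $b_i$'s over $M$) assemble into a weakly independent $\mathcal{P}^-(3)$-system of models of $\ACF_{R\text{-module}}$; Lemma~\ref{lemma_indep_3_amalgamation} completes this to a $\mathcal{P}(3)$-system, and embedding the top vertex back into $\M$ over $Mc_1c_2$ produces the desired $b$. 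The hardest step is expected to be strong finite character in the module-failure case: capturing the non-trivial decomposition $p=\alpha+\beta$ by an existential formula whose every realization inherits the same obstruction requires the isolation lemma in combination with the intersection argument to rule out alternative decompositions $u'+v'=p_1+p_2$.
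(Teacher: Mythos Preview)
Your plan matches the paper's proof: apply Fact~\ref{nsop1 condition exist} to $\forkindep^w$, get monotonicity via Lemma~\ref{R-modules intersection} together with $\overline{BDC}\cap\overline{AC}=\overline{C}$, get strong finite character in the module case via Lemma~\ref{lemma_isolate_acfg}, and deduce the independence theorem from $3$-amalgamation (Lemma~\ref{lemma_indep_3_amalgamation}). The paper routes the last step through the general equivalence in Proposition~\ref{equivalence 3 amalgamation} rather than assembling the $\mathcal{P}^-(3)$-system by hand, but the content is the same; note that to embed the completed top vertex back into $\M$ over $Mc_1c_2$ you are implicitly using that $\overline{Mc_1c_2}$ is an amalgamation base and that $T$ is a JEP-refinement (so the completed system lives in a model of a companion of $T$), exactly as the paper spells out.

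Two points to tighten. First, in the ACF-dependence case your claim ``some coordinate $a_i$ is algebraic over $Mb$ but not over $M$'' is false in general: take $a=(a_0,a_1)$ algebraically independent over $M$ and $b=a_0+a_1$; then $a\not\forkindep^\ACF_M b$ but neither $a_0$ nor $a_1$ is algebraic over $Mb$. The paper simply invokes strong finite character of forking in ACF (a standard stability fact), and you should do the same rather than attempt a coordinate-by-coordinate argument. Second, in the module case the contradiction only goes through once you first prove the key claim $\beta\notin M + P\cap B$ (the paper does this explicitly: if $\beta=m+p$ with $m\in M$, $p\in P\cap B$, then $s-p=\alpha+m\in P\cap A$ forces $s\in P\cap A+P\cap B$). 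This is exactly the ``obstruction'' you allude to; since $\psi_\beta$ isolates $\tp_\exists(\beta/Mb)$, the realization $\beta'$ inherits $\beta'\notin M+P\cap B$, and then $\alpha''-\alpha'=\beta'-\beta''\in A'\cap B=M$ yields $\beta'\in M+P\cap B$, the desired contradiction.
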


\begin{proof}
    We will use \Cref{nsop1 condition exist}, with the weak independence $\forkindep^w$.

    Let $\M$ be a monster model of $T$, and let $P=P_\M$.
    Invariance, symmetry and existence over models are trivial.
    For monotonicity, suppose $A,B,C,D\subseteq \M$, and $A\forkindep^w_C BD$. 
    By monotonicity of independence in ACF, we have $A\forkindep^\ACF_C B$. 
    We also get that
    \begin{align*}
        P\cap (\overline{AC}+\overline{BC})&=P\cap (\overline{AC}+\overline{BDC})\cap (\overline{AC}+\overline{BC})\\
        &= (P\cap \overline{AC}+P\cap \overline{BDC})\cap (\overline{AC}+\overline{BC})\\
        &= P\cap \overline{AC}+P\cap \overline{BDC}\cap (\overline{AC}+\overline{BC})\\
        &= P\cap \overline{AC}+P\cap (\overline{BDC}\cap \overline{AC}+\overline{BC})
    \end{align*}
    where the last two equalities follow from \Cref{R-modules intersection}, because $P\cap \overline{AC}\subseteq \overline{AC}+\overline{BC}$ and $\overline{BC}\subseteq \overline{BDC}$.
    However, $A\forkindep^\ACF_C BD$ implies that $\overline{BDC}\cap \overline{AC}=\overline{C}$, so we get $ P\cap (\overline{AC}+\overline{BC})=P\cap \overline{AC}+P\cap \overline{BC}$.
    Thus, $A\forkindep^w_C B$.
    
    \cref{equivalence 3 amalgamation} will give us the independence theorem.
    Note that to use \cref{equivalence 3 amalgamation} we need 3-amalgamation of $\EC(T)$, but \cref{lemma_indep_3_amalgamation} gives us 3-amalgamation of $\ACF_{R\text{-module}}$.
    However, a weakly independent $\mathcal{P}^-(3)$-system $\set{A_s}_{s\in \mathcal{P}^-(3)}$ of $\EC(T)$ is in particular a weakly independent $\mathcal{P}^-(3)$-system of $\ACF_{R\text{-module}}$, as existentially closed models of $T$ are also existentially closed models of F$_{R\text{-module}}$ (by the definition of JEP refinement), and in particular algebraically closed.
    A completion of the system in $\ACF_{R\text{-module}}$, $A_3\vDash \ACF_{R\text{-module}}$, can be expanded to a model of $T$, because $A_3$ is a model of $F_{R\text{-module}}\cup\text{Th}_\exists(A_\emptyset)$ which is a companion of $T$ (\cref{amalgamation base refinement}). 
    The model of $T$ can in turn be expanded to an existentially closed model, giving a completion of the system in $\EC(T)$.
    

    For strong finite character, suppose $a\not\forkindep^w_M b$, and let $A=\overline{M(a)}$, $B=\overline{M(b)}$.
    If $a\not\forkindep_M^\ACF b$, then the result follows from strong finite character in ACF. 
    Else, there is some $s\in P\cap (A+B)\setminus (P\cap A+P\cap B)$.
    There are $\alpha\in A$, $\beta\in B$ such that $s=\alpha+\beta$.
    We claim that $\beta\notin M+P\cap B$. Otherwise, there are some $m\in M$, $p\in P\cap B$ such that $\beta=m+p$, and so $s=\alpha+m+p$.
    This implies that $s-p=\alpha+m\in P\cap A$, thus $s=\alpha+m+p\in P\cap A+P\cap B$, a contradiction.

    There are formulas $\psi_\alpha(y,a,m)\in \tp_\exists(\alpha/Ma)$ and $\psi_\beta(z,b,m)\in \tp_\exists(\beta/Mb)$ isolating their respective types as in \Cref{lemma_isolate_acfg}.
    Let $\lambda(x,b,m)$ be the formula
    $$\exists y\exists z \psi_\alpha(y, x, m)\land \psi_\beta(z, b, m)\land y+z\in P,$$
    we have $\lambda(x,b,m)\in \text{tp}_\exists(a/Mb)$.
    
    Suppose that $a'\vDash\lambda(x,b,m)$, and assume towards contradiction that $a'\forkindep^w_M b$.
    Let $A'=\overline{M(a')}$, from $a'\forkindep^\ACF_M b$ we get $A'\cap B=M$.
    Let $\alpha',\beta'$ witness the existence in $\lambda(a',b,m)$, that is  $\alpha'\vDash \psi_\alpha(y, a', m)$, $\beta'\vDash \psi_\beta(z,b,m)$, and $s':=\alpha'+\beta'\in P$.
    We have $\beta'\equiv^\exists_{Mb} \beta$, in particular $\beta'\in B$ and $\beta'\notin M+P\cap B$, and by the ``furthermore'' part of \cref{lemma_isolate_acfg}, we can assume that $\alpha'\in A'$. 
    By weak independence, $P\cap (A'+B)=P\cap A'+P\cap B$, so there are $\alpha''\in P\cap A'$, $\beta''\in P\cap B$ such that $s'=\alpha''+\beta''$.
    It follows that $\alpha''-\alpha'=\beta'-\beta''\in A'\cap B=M$, but then $\beta'=\alpha''-\alpha'+\beta''\in M+(B\cap P)$, a contradiction.
\end{proof}
\begin{remarkcnt}
    In a recent paper \cite{dobrowolski2021kimindependence}, Dobrowolski and Kamsma generalized the notion of Kim-independence to positive logic, which is a more general context than the one we deal with.
They also prove that the independence relation defined on exponential fields in \cite{Haykazyan_2021} to prove NSOP$_1$ is actually Kim-independence. A similar strategy as the one in \cite[\S 10.2]{dobrowolski2021kimindependence} seems to yield that $\forkindep^w$ is Kim-independence: extension and transitivity of $\forkindep^w$ are similar to \cite[Theorem 1.4]{D_ELB_E_2021_acfg} and the fact that F$_{R\text{-module}}$ is Hausdorff follows from Theorem~\ref{amalgamation_bases}.
\end{remarkcnt}

\section{Higher amalgamation of strong independence}
In the previous section, we used the weak independence defined in \cite[Definition 3.1]{d_Elb_e_2021_generic}.
In the above cited definition, another independence called strong independence was defined. 
Strong independence is less useful for us in the study of NSOP$_1$, because the proof of strong finite character does not work for strong independence, yet it still has properties worth studying.
In this section we will prove that strong independence has $n$-amalgamation for every $n\ge 3$.

Note that in \cite{Haykazyan_2021}, Haykazyan and Kirby defined a single independence relation that had both strong finite character and $n$-amalgamation.
This does not seem to be the case in our situation.

\begin{definition}
    For a model $M\vDash \ACF_{R\text{-module}}$, and subsets $A,B,C\subseteq M$, say that $A$ and $B$ are \emph{strongly independent} over $C$, and denote $A\forkindep^{s}_C B$, if $A\forkindep_C^\ACF B$ and $P_M\cap\overline{ABC}=P_M\cap \overline{AC}+P_M\cap \overline{BC}$.    
\end{definition}

\begin{lemma}\label{properties of strong independence}
    The following are a few model theoretic properties of strong independence that we will use. 
    \begin{itemize}
        \item (Algebraicity) If $A\forkindep^s_C B$, then $\overline{AC}\forkindep^s_{\overline{C}} \overline{BC}$.
        \item (Monotonicity) If $A\forkindep^s_C BD$, then $A\forkindep^s_C B$.
    \end{itemize}
\end{lemma}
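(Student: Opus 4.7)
My plan is to handle the two clauses separately, observing that in each case the ACF-part is immediate from the standard properties of non-forking in $\ACF$, so that the work reduces to checking the additional ``module'' equality in the definition of $\forkindep^s$.

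For algebraicity, I would first note that $A\forkindep^{\ACF}_C B$ is equivalent to $\overline{AC}\forkindep^{\ACF}_{\overline C}\overline{BC}$ by the standard algebraicity of non-forking in $\ACF$. For the module clause, I would simply unfold the definition using idempotence of algebraic closure: $\overline{\overline{AC}\,\overline C}=\overline{AC}$, $\overline{\overline{BC}\,\overline C}=\overline{BC}$, and $\overline{\overline{AC}\,\overline{BC}\,\overline C}=\overline{ABC}$, so the required identity for the conclusion reads exactly $P_M\cap\overline{ABC}=P_M\cap\overline{AC}+P_M\cap\overline{BC}$, which is the hypothesis. So algebraicity is essentially a tautology after unpacking the notation.

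For monotonicity, the ACF-part is again standard: $A\forkindep^{\ACF}_C BD$ implies $A\forkindep^{\ACF}_C B$. The meat is to deduce
\[
P_M\cap\overline{ABC}\ =\ P_M\cap\overline{AC}+P_M\cap\overline{BC}
\]
from
\[
P_M\cap\overline{ABDC}\ =\ P_M\cap\overline{AC}+P_M\cap\overline{BDC}.
\]
The inclusion $\supseteq$ is the obvious one (\cref{remark obvious}). For $\subseteq$, take $s\in P_M\cap\overline{ABC}\subseteq P_M\cap\overline{ABDC}$ and write $s=\alpha+\beta$ with $\alpha\in P_M\cap\overline{AC}$ and $\beta\in P_M\cap\overline{BDC}$. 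Since $\alpha,s\in\overline{ABC}$, I get $\beta=s-\alpha\in\overline{ABC}\cap\overline{BDC}$.

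The key step, and the only place where something nontrivial happens, is to argue that this intersection collapses to $\overline{BC}$. This I would get from the ACF-independence $A\forkindep^{\ACF}_C BD$: by the standard ``base change'' property of non-forking in $\ACF$ we obtain $A\forkindep^{\ACF}_{BC} D$, which is equivalent to $\overline{ABC}\cap\overline{BDC}=\overline{BC}$. Hence $\beta\in P_M\cap\overline{BC}$, and $s=\alpha+\beta\in P_M\cap\overline{AC}+P_M\cap\overline{BC}$, as required. I do not expect any real obstacle beyond locating the correct ACF-independence rearrangement to justify that intersection identity; everything else is bookkeeping with \cref{R-modules intersection}-style manipulations.
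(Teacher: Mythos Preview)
Your proposal is correct and follows essentially the same route as the paper. For algebraicity both you and the paper observe it is immediate from unpacking the definition; for monotonicity the paper presents the same computation as a chain of set equalities using \cref{R-modules intersection}, while you do the equivalent element-chasing, and both hinge on exactly the same key step: deducing $\overline{ABC}\cap\overline{BDC}=\overline{BC}$ from base monotonicity of $\forkindep^{\ACF}$ applied to $A\forkindep^{\ACF}_C BD$.
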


\begin{proof}
    Algebraicity is obvious from the definition, and from algebraicity of $\forkindep^\ACF$.
    For monotonicity, suppose $A\forkindep^s_C BD$, from monotonicity of $\forkindep^\ACF$ we have $A\forkindep^\ACF_C B$.
    We also get that
    \begin{align*}
        P\cap (\overline{ABC})&=P\cap \overline{ABDC}\cap \overline{ABC}\\
        &= (P\cap \overline{AC}+P\cap \overline{BDC})\cap \overline{ABC}\\
        &= P\cap \overline{AC}+P\cap \overline{BDC}\cap \overline{ABC}\\
        &= P\cap \overline{AC}+P\cap \overline{BC},
    \end{align*}
    where the second equality is from the definition of strong independence, the third equality is from \Cref{R-modules intersection}, and the last equality is from $AB\forkindep^\ACF_{BC} BDC$, which we get from base monotonicity of $\forkindep^\ACF$. 
\end{proof}

\begin{notation} \label{notation downward closed}
    A subset $I\subseteq \mathcal{P}(n)$ is called \emph{downward-closed} if $a\in I$ and $b\subseteq a$ imply $b\in I$.
    
    For a $\mathcal{P}(n)$ ($\mathcal{P}^-(n)$)-system $F$ of fields with an $R$-submodule, and $I\subseteq \mathcal{P}(n)$ ($\mathcal{P}^-(n)$) non-empty downward-closed, let
    \begin{align*}
        F_I &= \overline{\bigcup_{a\in I} F_a}\\
        P_I &= \sum_{a\in I} P_{F_a}
    \end{align*}
    Also let $F_{\subsetneq a}=F_{\mathcal{P}^-(a)}$, and the same for $P$.
\end{notation}

\begin{lemma}\label{lemma_indep_system}
    A $\mathcal{P}(n)$ ($\mathcal{P}^-(n)$)-system $M$ of $\ACF_{R\text{-module}}$ is strongly independent iff it is $\forkindep^\text{ACF}$-independent and for every $a\in\mathcal{P}(n)$ ($\mathcal{P}^-(n)$) and $I\subseteq \mathcal{P}(a)$ non-empty downward-closed (if $a\in I$ and $b\subseteq a$, then $b\in I$), we have $P_a\cap M_I=P_I$.
\end{lemma}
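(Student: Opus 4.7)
The plan is to prove both directions by directly translating the definition of strong independence into the downward-closed language, using the basic fact that the embedding $F_b\hookrightarrow F_a$ preserves the predicate $P$ (so $P_a\cap F_b=P_b$) and the two properties from \cref{properties of strong independence}.

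\emph{Backward direction} (the condition implies strong independence): fix $a\in\mathcal{P}(n)$ and $b\subseteq a$; we must verify
\[F_b\forkindep^{s}_{M_{\mathcal{P}^-(b)}}M_K,\qquad\text{where }K=\{d\subseteq a\mid b\not\subseteq d\}.\]
The ACF part is given. For the module part, the key observation is that both $K$ and $K\cup\{b\}$ are downward-closed in $\mathcal{P}(a)$: if $e\subseteq d\in K$ then $b\subseteq e$ would force $b\subseteq d$, contradicting $d\in K$. Since $\overline{F_b\cdot M_K}=M_{K\cup\{b\}}$ and $M_{\mathcal{P}^-(b)}\subseteq F_b\cap M_K$, the required identity reads
\[P_a\cap M_{K\cup\{b\}}=(P_a\cap F_b)+(P_a\cap M_K).\]
Applying the hypothesis to the three downward-closed sets $\mathcal{P}(b)$, $K$ and $K\cup\{b\}$ (and using $P_{\mathcal{P}(b)}=P_b$ together with $P_{K\cup\{b\}}=P_K+P_b$) both sides equal $P_K+P_b$, and we are done.

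\emph{Forward direction} (strong independence implies the condition): fix $a\in\mathcal{P}(n)$ and proceed by induction on $|I|$. If $a\in I$ then $M_I=F_a$ and $P_I=P_a$, so the equality is trivial. In the base case $I=\{\emptyset\}$ we have $P_a\cap F_\emptyset=P_\emptyset$ because $F_\emptyset\hookrightarrow F_a$ is an $L_{R;P}$-embedding. For the inductive step with $a\notin I$, pick a maximal element $b\in I$ and set $J=I\setminus\{b\}$; since $b$ is maximal, $J$ is still downward-closed, and since $I$ is downward-closed, $\mathcal{P}^-(b)\subseteq J$. Strong independence at $(a,b)$ together with monotonicity applied to the inclusion $M_J\subseteq M_K$ (where $K$ is as above) yields
\[F_b\forkindep^{s}_{M_{\mathcal{P}^-(b)}}M_J,\]
whence
\[P_a\cap M_I=P_a\cap\overline{F_b\cdot M_J}=(P_a\cap F_b)+(P_a\cap M_J)=P_b+P_J=P_I,\]
the penultimate equality using $P_a\cap F_b=P_b$ and the inductive hypothesis on $J$.

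The only step requiring care is the passage from strong independence at the ``canonical'' right-hand side $M_K$ to the actual $M_J$ one needs in the induction; this is handled cleanly by monotonicity of $\forkindep^{s}$ once one checks that $\mathcal{P}^-(b)\subseteq J\subseteq K$, so that the base $M_{\mathcal{P}^-(b)}$ is still contained in $M_J$ and no spurious enlargement of the base occurs. The argument for $\mathcal{P}^-(n)$-systems is identical, restricting all indices to proper subsets of $n$.
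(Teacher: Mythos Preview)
Your proof is correct and follows essentially the same route as the paper's: the backward direction applies the hypothesis to the downward-closed families $K=\{d\subseteq a\mid b\not\subseteq d\}$ and $K\cup\{b\}$, and the forward direction is the same induction on $|I|$, removing a maximal element and invoking monotonicity (together with algebraicity, which is implicit in your use of $M_J$ rather than $\bigcup_{d\in J}F_d$) to pass from the canonical right-hand side $M_K$ to $M_J$.
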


\begin{proof}
    For the left to right direction, suppose $M$ is strongly independent.
    The proof is by induction on $\abs{I}$.
    If $\abs{I}=1$, then we must have $I=\set{\emptyset}$, so this case is trivial as $P_a\cap M_\emptyset = P_\emptyset$.
    If $\abs{I}>1$, then take a maximal $b\in I$ and let $I'=I\setminus \set{b}$, which is also non-empty downward-closed.
    By strong independence,  $M_{b}\forkindep^s_{M_{\subsetneq b}} \bigcup_{b\not\subseteq c\subseteq a}M_c$.
    For every $c\in I'$ we have $b\not\subseteq c\subseteq a$, so by monotonicity and algebraicity (\Cref{properties of strong independence}) $M_{b}\forkindep^s_{M_{\subsetneq b}} M_{I'}$.
    It follows that
    \begin{align*}
        P_a\cap M_I&=P_a\cap\overline{M_{b} M_{I'}}\\
        &=(P_a\cap M_b)+(P_a\cap M_{I'})\\
        &=P_{b}+P_{I'}=P_I,
    \end{align*}
    where the second equality is from strong independence and the third equality is from the induction assumption.
    
    For the right to left direction, we need to prove $M_{b}\forkindep^s_{M_{\subsetneq b}} \bigcup_{b\not\subseteq c\subseteq a}M_c$.
    Consider the downward-closed families $I'=\set{c\mid b\not\subseteq c\subseteq a}$ and $I=I'\cup \set{b}$.
    With this notation, We need to prove $M_{b}\forkindep^s_{M_{\subsetneq b}} M_{I'}$.
    By the assumption,
    \begin{align*}
      P_a\cap \overline{M_bM_{I'}} &= P_a\cap M_I = P_I= P_b+P_{I'} \\
      &= (P_a\cap M_b)+(P_a\cap M_{I'}),  
    \end{align*}
    and we already know $M_{b}\forkindep^\ACF_{M_{\subsetneq b}} M_{I'}$, so this finishes the proof.
\end{proof}

\begin{theorem}[$n$-amalgamation] \label{n-amalgamation}
    Every strongly independent $\mathcal{P}^-(n)$-system of $\ACF_{R\text{-module}}$ can be completed to a strongly independent $\mathcal{P}(n)$-system.
\end{theorem}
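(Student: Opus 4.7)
The plan is to follow the template of \cref{lemma_indep_3_amalgamation} in arbitrary arity and with strong independence, using \cref{lemma_indep_system} to reduce strong $n$-amalgamation to a family of downward-closed intersection equalities inside the amalgam. First, apply ACF $n$-amalgamation (\cref{ACF n amalgamation}) to the $\forkindep^{\ACF}$-reduct of the given $\mathcal{P}^-(n)$-system to obtain an algebraically closed field $M_n$ completing it to an ACF-independent $\mathcal{P}(n)$-system; identify each $M_s$ with its image in $M_n$. Equip $M_n$ with the $R$-submodule $P_n := \sum_{s \in \mathcal{P}^-(n)} P_{M_s}$, so that $(M_n,P_n)\vDash \ACF_{R\text{-module}}$.

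The main task is to prove the extension property $P_n\cap M_s = P_s$ for every $s\in \mathcal{P}^-(n)$; only the inclusion $\subseteq$ is non-trivial. Given a decomposition $m = \sum_t p_t$ with $m\in M_s$ and $p_t\in P_{M_t}$, the plan is to iteratively eliminate ``bad'' indices $t\not\subseteq s$ with $p_t\neq 0$. Pick $t^\ast$ maximal among the bad indices: badness forces $s\cap t^\ast\subsetneq t^\ast$, and maximality precludes any $t\supsetneq t^\ast$ with $p_t\neq 0$ (any such $t$ would itself be bad, since $t^\ast\subsetneq t\subseteq s$ would give $t^\ast\subseteq s$). Apply \cref{fact shelah} to the ACF-identity $m=\sum p_t$ with parameter $t^\ast$: $p_{t^\ast}$ and all $p_t$ with $t\subseteq t^\ast$ remain fixed, $m$ is replaced by $m'\in M_{s\cap t^\ast}$, and each $p_t$ with $t\not\subseteq t^\ast$ is replaced by an element of $M_{t\cap t^\ast}$. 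Since $s\cap t^\ast$ and every $t\cap t^\ast$ (for $t\neq t^\ast$ with $p_t\neq 0$) are strict subsets of $t^\ast$, rearranging yields $p_{t^\ast}\in M_{\mathcal{P}^-(t^\ast)}$; by \cref{lemma_indep_system} applied to the strongly independent $\mathcal{P}(t^\ast)$-subsystem of the original system with $I=\mathcal{P}^-(t^\ast)$, this upgrades to $p_{t^\ast}\in P_{t^\ast}\cap M_{\mathcal{P}^-(t^\ast)} = \sum_{t'\subsetneq t^\ast}P_{t'}$. Substituting the resulting expression for $p_{t^\ast}$ back into the decomposition of $m$ eliminates the index $t^\ast$; the lex-ordered tuple of counts of indices with non-zero coefficient, read from largest size downward, strictly decreases at each step (no new indices above $t^\ast$ are introduced, and the count at size $|t^\ast|$ drops by one), so finitely many iterations yield a decomposition with no bad indices, whence $m\in\sum_{t\subseteq s}P_t = P_s$.

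Finally, to conclude strong independence of the completed $\mathcal{P}(n)$-system, by \cref{lemma_indep_system} it remains to check $P_a\cap M_I = P_I$ for every $a\in \mathcal{P}(n)$ and every non-empty downward-closed $I\subseteq \mathcal{P}(a)$. The cases $a\neq n$ are inherited from strong independence of the original system, and the case $a=n$ with $n\in I$ is trivial. For $I\subseteq \mathcal{P}^-(n)$ with $u:=\bigcup_{s\in I}s\neq n$, the extension property gives $P_n\cap M_I\subseteq P_n\cap M_u = P_u$, after which $P_u\cap M_I = P_I$ follows from \cref{lemma_indep_system} applied to the $\mathcal{P}(u)$-subsystem. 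The edge case $u=n$ will be handled by an analogous Shelah-style argument applied inside $M_I$, treating elements of $M_I$ via their algebraic dependence on generators drawn from $\bigcup_{s\in I}M_s$ (so as to be able to invoke \cref{fact shelah} on a system-level formula).

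The main obstacle is the iterative elimination of bad indices in the second step --- both setting up the well-founded measure on decompositions so that each application of \cref{fact shelah} strictly decreases it, and justifying the crucial upgrade from $\sum M_{t'}$ to $\sum P_{t'}$ at each stage by invoking strong independence of the original $\mathcal{P}^-(n)$-system (rather than by any as-yet-unestablished property of the newly constructed $P_n$) --- together with the edge case $u=n$ in the final step, where elements of $M_I$ are algebraic over, but not contained in, the union $\bigcup_{s\in I}M_s$.
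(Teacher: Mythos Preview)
Your approach is correct and uses the same core mechanism as the paper (apply \cref{fact shelah} to push a ``bad'' summand $p_{t^\ast}$ down into $M_{\mathcal P^-(t^\ast)}$, then invoke \cref{lemma_indep_system} on the original system to upgrade this to $P_{\mathcal P^-(t^\ast)}$), but the organization differs. You prove the extension property $P_n\cap M_s=P_s$ first by iterative elimination, and then attempt to derive the remaining strong-independence conditions from it; this forces you to split off the edge case $\bigcup I=n$, which --- as you correctly note --- requires essentially rerunning the elimination argument, now with $m\in M_I$ encoded by an algebraic relation $q(m,\bar m_0)=0$ over generators $\bar m_0$ from $\bigcup_{s\in I}M_s$ so that \cref{fact shelah} applies. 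The paper avoids this redundancy by proving a single stronger claim in one induction: for every pair of non-empty downward-closed $I,J\subseteq\mathcal P(n)$, one has $M_I\cap P_J\subseteq P_I$, by induction on $|I\cup J|$. In the inductive step one picks a maximal $c\in J\setminus I$ and shows (via \cref{fact shelah} applied to the algebraicity witness for $p_c\in M_{I\cup J'}$) that $p_c\in P_K\subseteq P_{I\cup J'}$, reducing to the pair $(I,I\cup J')$. Taking $J=\mathcal P(n)$ gives the extension property; taking $J=\mathcal P(a)$ gives the \cref{lemma_indep_system} condition for arbitrary $I$, so your edge case never arises as a separate step. Your iterative scheme and the paper's induction on $|I\cup J|$ are really the same engine --- your measure on decompositions is a proxy for $|I\cup J|$ --- but the paper's formulation buys uniformity.
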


\begin{proof}
    Suppose $M=(M_a)_{a\in \mathcal{P}^-(n)}$ is a strongly independent $\mathcal{P}^-(n)$-system of $\ACF_{R\text{-module}}$.
    By \cref{ACF n amalgamation}, there is a field $M_n$ completing $M$ as an independent system of ACF.
    Define $P_n:=P_{\subsetneq n}=\sum_{s\subsetneq n} P_s$, we need to show that $(M_n,P_n)$ completes a strongly independent system. 
    For this we will need the following claim:
    \begin{claim}
        For every $I,J\subseteq \mathcal{P}(n)$ non-empty downward-closed, $M_I\cap P_J\subseteq P_I$.
    \end{claim}
    Suppose we proved this claim.
    For every $a\in \mathcal{P}(n)$, if we take $I=\mathcal{P}(a)$ and $J=\mathcal{P}(n)$, then we will get $M_a\cap P_n\subseteq P_a$, and the other inclusion is obvious, so $(M_n, P_n)$ completes a system of $\ACF_{R\text{-module}}$. 
    Taking $J=\mathcal{P}(a)$ and $I\subseteq \mathcal{P}(a)$, we'll get $M_I\cap P_a\subseteq P_I$, and again the other inclusion is obvious, so by \cref{lemma_indep_system} the system is strongly independent. 
    All that remains is proving the claim.
    
    We will prove the claim by induction on $\abs{IJ}$. 
    The base case is $\abs{IJ}=1$, which must mean $I=J=\set{\emptyset}$, which is trivial. 
    In the general case, first notice that if $J=\mathcal{P}(n)$, then $P_J=P_n=P_{\mathcal{P}^-(n)}$, so without loss of generality we can assume $J\subseteq\mathcal{P}^-(n)$.
    If $J\subseteq I$, then it is also trivial, else take some maximal $c\in J$ such that $c\notin I$, and consider $J'=J\setminus\set{c}$, which is also non-empty downward-closed.
    
    We have $P_J=P_c+P_{J'}$, so we need to prove that $M_I\cap (P_c+P_{J'})\subseteq P_I$. 
    Suppose $p_c+p_{J'}\in M_I\cap (P_c+P_{J'})$ for $p_c\in P_c$, $p_{J'}\in P_{J'}$.
    In particular, $p_c\in M_{IJ'}$.
    Remember that $M_{IJ'} = \overline{\bigcup_{a\in IJ'} M_a}$, so there is a tuple $m\in \bigcup_{a\in IJ'} M_a$ such that $q(p_c,m)=0$
    for some non-zero polynomial $q(x,m)$.
    By \cref{fact shelah}, there is a tuple $m'\in \bigcup_{a\in IJ'} M_{a\cap c}$ such that $q(p_c,m')=0$ and $q(x,m')$ is a non-zero polynomial.
    Let $K=\set{a\in IJ'\mid a\subseteq c}=\set{a\cap c\mid a\in IJ'}$, we get that $p_c\in M_K$.
    By \Cref{lemma_indep_system}, $P_c\cap M_K =P_K\subseteq P_{IJ'}$, so $p_c\in P_{IJ'}$. 
    Also, $p_{J'}\in P_{J'}\subseteq P_{IJ'}$, so $p_c+p_{J'}\in P_{IJ'}$. 
    We know that $c\notin IJ'$, so in particular $\abs{IJ'}<\abs{IJ}$, and by the induction hypothesis $M_I\cap P_{IJ'}\subseteq P_I$.
    Thus, $p_c+p_{J'}\in P_I$, as needed.
\end{proof}

\appendix
\section{Results on higher amalgamation} \label{Results on higher amalgamation}
Our definition of independent systems, which we borrowed from \cite{Haykazyan_2021}, is not the same as the one used by other authors, e.g. \cite{Hrushovski98simplicityand,dePiro2006group,Goodrick2013amalgamation}.
For example, in the latter definition the objects of the independent system are not necessarily models, and in the former there is no requirement about the bounded closure. 
It follows that our notion of $n$-amalgamation is different from the one used in those papers, and adapting results from one definition to another is not trivial.
In this appendix we prove well known results about higher amalgamation, using our definition.

\iftoggle{THESIS}{
\section{Higher amalgamation of ACF}
}{
\subsection{Higher amalgamation of ACF}
}
Under the common definition, ACF has $n$-amalgamation for every $n$. 
More generally, \cite[Proposition 1.6]{dePiro2006group} proves that any stable theory has $n$-amalgamation over a model for all $n$.
In this section we prove that ACF has $n$-amalgamation per our definition.

\iftoggle{THESIS}{
First, recall that for fields $A,B,C$ such that $C\subseteq A\cap B$, we say that \emph{$A$ is linearly disjoint from $B$ over $C$}, and denote $A\forkindep^l_C B$, if whenever $a_0,\dots ,a_{n-1}\in A$ are linearly independent over $C$ they are also linearly independent over $B$.
Equivalently, $A$ is linearly disjoint from $B$ over $C$ iff the canonical map $A\otimes B\to A[B]$ is an isomorphism. 
In particular, if $A\forkindep^l_C B$ and we have maps $f:A\to K$ and $g:B\to K$ (for some field $K$) such that $f|_C=g|_C$, then we can jointly extend them to a map $A.B\to K$.
For more information, see \cref{subfield preliminaries} or \cite[\S III.1.a]{lang2019introduction}.
}{
First, recall that for fields $A,B,C$ that are subfields of a common field, such that $C\subseteq A\cap B$, we say that \emph{$A$ is linearly disjoint from $B$ over $C$}, and denote $A\forkindep^l_C B$, if whenever $a_0,\dots ,a_{n-1}\in A$ are linearly independent over $C$ they are also linearly independent over $B$.
Equivalently, $A$ is linearly disjoint from $B$ over $C$ iff the canonical map $A\otimes B\to A[B]$ is an isomorphism. 
In particular, if $A\forkindep^l_C B$ and we have maps $f:A\to K$ and $g:B\to K$ (for some field $K$) such that $f|_C=g|_C$, then we can jointly extend them to a map $A.B\to K$.
For more information, see \cite[\S III.1.a]{lang2019introduction}.
}

\begin{lemma} \label{independent system linearly disjoint}
Let $F=\set{F_a}_{a\in \mathcal{P}(n)}$ be an independent $\mathcal{P}(n)$-system of ACF, where all embeddings are subset-inclusions.
Suppose $a,b_0,\dots,b_{k-1}\subseteq n$, then 
$$F_a\forkindep^l_{F_{a\cap b_0}\dots F_{a\cap b_{k-1}}}F_{b_0}\dots F_{b_{k-1}}.$$
\end{lemma}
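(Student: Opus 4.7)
The plan is to reduce the claim to a single application of \cref{fact shelah} with $t = a$, avoiding any induction on $k$. Writing $E := F_{a \cap b_0} \cdots F_{a \cap b_{k-1}}$ and $B := F_{b_0} \cdots F_{b_{k-1}}$, a nontrivial linear dependence of $F_a$-elements over $B$ will be encoded as a first-order formula with parameters in the various $F_{b_i}$, and Shelah's fact lets us replace those parameters by ones in $F_{a \cap b_i}$, producing a dependence over $E$.

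Suppose toward contradiction that $\alpha_1, \dots, \alpha_m \in F_a$ are $E$-linearly independent but satisfy $\sum_i \alpha_i \beta_i = 0$ for some $\beta_i \in B$ with $\beta_1 \neq 0$ (after reordering). Since $B$ is the compositum of the $F_{b_j}$, we can write each $\beta_i = P_i(\bar\gamma)/Q(\bar\gamma)$ with a single common denominator $Q \in \mathbb{Z}[\bar y]$ satisfying $Q(\bar\gamma)\neq 0$, polynomials $P_i \in \mathbb{Z}[\bar y]$, and generators $\gamma_j \in F_{b_{c(j)}}$ for some assignment $c(j) < k$. Clearing $Q$ yields $\sum_i \alpha_i P_i(\bar\gamma) = 0$ together with $P_1(\bar\gamma) \neq 0$, both captured by the first-order formula
$$\phi(\bar x, \bar y) := \Bigl[\sum_i x_i P_i(\bar y) = 0\Bigr] \wedge \bigl[P_1(\bar y) \neq 0\bigr]$$
holding at $(\bar\alpha, \bar\gamma)$ in $F_n$.

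Now apply \cref{fact shelah} with $t = a$: the tuple $\bar\alpha$ sits in $F_a$ with index $s = a \subseteq t$, hence is preserved, while each $\gamma_j$ is replaced by some $\gamma_j' \in F_{b_{c(j)} \cap a}$ with $\phi(\bar\alpha, \bar\gamma')$ still true in $F_n$. Setting $\beta_i' := P_i(\bar\gamma')$, each $\beta_i'$ lies in the compositum of the fields $F_{a \cap b_{c(j)}}$, hence in $E$, and $\sum_i \alpha_i \beta_i' = 0$ with $\beta_1' = P_1(\bar\gamma') \neq 0$ contradicts the $E$-linear independence of the $\alpha_i$. This gives $F_a \forkindep^l_E B$ as required.

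The main obstacle is simply the bookkeeping: one must clear denominators uniformly and track which $F_{b_j}$ each generator $\gamma_j$ belongs to, so that the application of \cref{fact shelah} uses the correct $s(i)$'s and so that the relocated $\beta_i'$ land inside $E$. Once $\phi$ is in place, the conclusion is immediate from Shelah's fact and does not require induction on $k$ or on $|n|$.
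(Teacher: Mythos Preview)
Your proof is correct and follows essentially the same approach as the paper: both arguments apply \cref{fact shelah} with $t=a$ to replace the generators $\gamma_j\in F_{b_{c(j)}}$ by elements of $F_{a\cap b_{c(j)}}$, keeping the $\alpha_i\in F_a$ fixed. Your write-up is in fact slightly more careful than the paper's: you clear denominators explicitly and include the clause $P_1(\bar y)\neq 0$ in the formula $\phi$, ensuring that the relocated relation $\sum_i \alpha_i\beta_i'=0$ is genuinely nontrivial, a point the paper's proof leaves implicit.
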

\begin{proof}
    Suppose $\sum_i \alpha_i\beta_i=0$ for $\alpha_i\in F_a$ and $\beta_i\in F_{b_0}\dots F_{b_{k-1}}$. 
    We can write $\beta_i=q_i(\beta_{i,0},\dots,\beta_{i,k-1})$ for $\beta_{i,j}\in F_{b_j}$ and $q_i$ a rational function.
    By \cref{fact shelah}, there exist $\gamma_{i,j}\in F_{a\cap b_j}$ such that $\sum_i\alpha_iq_i(\gamma_{i,0},\dots,\gamma_{i,k-1})=0$.
    Denote 
    $$\gamma_i=q_i(\gamma_{i,0},\dots,\gamma_{i,k-1})\in F_{a\cap b_0}\dots F_{a\cap b_{k-1}},$$
    we have $\sum_i \alpha_i\gamma_i=0$ as needed.
\end{proof}

\begin{lemma} \label{extend map independent system}
     Let $F=\set{F_a}_{a\in \mathcal{P}(n)}$ ($n>0$) be an independent $\mathcal{P}(n)$-system of ACF, where all embeddings are subset-inclusions.
     Suppose $K$ is another field, and for every $a\subsetneq n$ there is an embedding $\tau_a:F_a\to K$, such that $\tau_a|_{F_b}=\tau_b$ for $b\subseteq a\subsetneq n$.
     Furthermore, suppose that $T$ is a transcendence basis of $F_n$ over $\bigcup_{a\subsetneq n} F_a$ and that $S\subseteq K$ is algebraically independent over $\bigcup_{a\subsetneq n} \tau_a(F_a)$ with $\abs{S}=\abs{T}$. 
     Then there exists an embedding $\tau_n:F_n\to K$ such that $\tau_n|_{F_a}=\tau_a$ for $a\subseteq n$ and $\tau_n(T)=S$.
\end{lemma}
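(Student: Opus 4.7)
The plan is to build $\tau_n$ in three stages, mirroring the presentation $F_n = \overline{L(T)}$, where $L \subseteq F_n$ denotes the compositum of the subfields $F_a$ for $a\subsetneq n$.

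\textbf{Stage 1 (gluing on $L$).} Enumerate the maximal proper subsets of $n$ as $a_i = n\setminus\{i\}$ for $i<n$, and set $L_k = F_{a_0}\cdots F_{a_{k-1}}$, so $L_n = L$. I will construct field embeddings $\tau^{(k)}\colon L_k\to K$ inductively, with $\tau^{(k)}|_{F_{a_i}} = \tau_{a_i}$ for all $i<k$. Start with $\tau^{(1)} := \tau_{a_0}$. For the inductive step, applying \cref{independent system linearly disjoint} with $a = a_k$ and $b_0,\ldots,b_{k-1} = a_0,\ldots,a_{k-1}$ yields $F_{a_k}\forkindep^l_{C} L_k$, where $C := F_{n\setminus\{0,k\}}\cdots F_{n\setminus\{k-1,k\}}$. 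Since $n\setminus\{i,k\} \subseteq a_k$ and $n\setminus\{i,k\} \subseteq a_i$, the compatibility of the $\tau_a$'s combined with the inductive hypothesis forces both $\tau_{a_k}$ and $\tau^{(k)}$ to restrict to $\tau_{n\setminus\{i,k\}}$ on each $F_{n\setminus\{i,k\}}$, and hence to agree on their compositum $C$. Linear disjointness identifies $F_{a_k}\cdot L_k$ with $F_{a_k}\otimes_C L_k$, so the two maps glue to an embedding $\tau^{(k+1)}\colon L_{k+1}\to K$.

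\textbf{Stage 2 (adjoining $T$).} The map $\tau := \tau^{(n)}\colon L\to K$ now extends every $\tau_a$. Since $T$ is a transcendence basis of $F_n$ over $\bigcup_{a\subsetneq n} F_a$, it is algebraically independent over $L$. On the other side, $\bigcup_{a\subsetneq n} \tau_a(F_a)$ generates $\tau(L)$ as a field, and $S$ is algebraically independent over this union by hypothesis, so $S$ is algebraically independent over $\tau(L)$ as well. Choosing any bijection $T\to S$, the universal property of polynomial rings followed by passage to fraction fields produces an embedding $\tau'\colon L(T)\to K$ extending $\tau$ and mapping $T$ onto $S$.

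\textbf{Stage 3 (algebraic closure).} Because $F_n$ is algebraically closed and $T$ is a transcendence basis of $F_n$ over $L$, $F_n$ is an algebraic closure of $L(T)$. The classical extension theorem for embeddings into algebraic closures (applied inside an algebraic closure of $K$, which in the intended setting already contains what we need) yields the desired $\tau_n\colon F_n\to K$ extending $\tau'$.

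The genuinely non-routine part is Stage 1: one has to verify that the pair of maps being glued really do agree on the subfield furnished by \cref{independent system linearly disjoint}, which reduces to systematically tracking the compatibility conditions $\tau_a|_{F_b} = \tau_b$ through the iterated composita $F_{n\setminus\{i,k\}}$. Stage 2 is essentially the universal property of a transcendence basis, and Stage 3 is classical field theory.
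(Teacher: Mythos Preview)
Your proof is essentially identical to the paper's: both build the map on the compositum $L = F_{\widehat{0}}\cdots F_{\widehat{n-1}}$ by inductively gluing $\tau_{\widehat{m}}$ onto the partial map already constructed, using the linear disjointness furnished by \cref{independent system linearly disjoint} and the compatibility $\tau_{\widehat{m}}|_{F_{\widehat{i,m}}}=\tau_{\widehat{i,m}}=\sigma_m|_{F_{\widehat{i,m}}}$, and then extend by sending $T$ to $S$ and passing to the algebraic closure. Your three-stage presentation and notation differ cosmetically, but the argument is the same.
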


\begin{proof}
    For $i,j<n$, denote $\widehat{i}=n\setminus\set{i}$ and $\widehat{i,j}=n\setminus\set{i,j}$.
    We will build by induction maps $\sigma_m:F_{\widehat{0}}\dots F_{\widehat{m-1}}\to K$ such that $\sigma_m|_{F_{\widehat{i}}}=\tau_{\widehat{i}}$ for $i<m\le n$.
    For $m=1$, set $\sigma_1$ to be $\tau_{\widehat{0}}$.
    Suppose we defined $\sigma_m$, by \cref{independent system linearly disjoint}
    $$F_{\widehat{m}}\forkindep_{F_{\widehat{0,m}}\dots F_{\widehat{m-1,m}}}^l F_{\widehat{0}}\dots F_{\widehat{m-1}}.$$
    Furthermore, for every $i<m$ $$\tau_{\widehat{m}}|_{F_{\widehat{i,m}}} =\tau_{\widehat{i,m}}=\tau_{\widehat{i}}|_{F_{\widehat{i,m}}}=\sigma_m|_{F_{\widehat{i,m}}},$$
    so $\tau_{\widehat{m}}$ and $\sigma_m$ coincide on the base of the independence.
    Thus, there exists a map $\sigma_{m+1}:F_{\widehat{0}}\dots F_{\widehat{m}}\to K$ such that $\sigma_{m+1}|_{F_{\widehat{0}}\dots F_{\widehat{m-1}}}=\sigma_m$ and $\sigma_{m+1}|_{F_{\widehat{m}}}=\tau_{\widehat{m}}$.
    
    Once we built $\sigma_m$ for every $1\le m\le n$, extend $\sigma_n:F_{\widehat{0}}\dots F_{\widehat{n-1}}\to K$ to an embedding $\tau_n:F_n\to K$ by mapping $T$ to $S$ and extending to the algebraic closure.
\end{proof}

\begin{proposition} \label{ACF n amalgamation}
    ACF has $n$-amalgamation for every $n$, with respect to non-forking independence.
\end{proposition}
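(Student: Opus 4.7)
I would construct $F_n$ by iteratively amalgamating the $F_a$ for $a \in \mathcal{P}^-(n)$ into a single large algebraically closed field. Enumerate the elements of $\mathcal{P}^-(n)$ as $a_0 = \emptyset, a_1, \ldots, a_m$ in an order compatible with inclusion, that is, so that $a_i \subsetneq a_j$ implies $i < j$. Starting from $K_0 := F_\emptyset$, at step $k \geq 1$ I would extend the algebraically closed field $K_{k-1}$ (which by the previous steps contains compatibly embedded copies of $F_{a_0}, \ldots, F_{a_{k-1}}$) to an algebraically closed $K_k$ that also contains an embedded copy of $F_{a_k}$.

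At step $k$, the sub-collection $\{F_b \mid b \subseteq a_k\}$ is an independent $\mathcal{P}(a_k)$-system of ACF: this is precisely the independence condition of the original $\mathcal{P}^-(n)$-system specialised to $a = a_k$. The embeddings $\tau_b : F_b \hookrightarrow K_{k-1}$ for $b \subsetneq a_k$ are already in hand and coherent with each other by the construction so far. I would then invoke \cref{extend map independent system}, with target $K$ a sufficiently large algebraically closed extension of $K_{k-1}$, and with $S \subseteq K$ chosen to be algebraically independent over $K_{k-1}$, of the cardinality of a transcendence basis $T$ of $F_{a_k}$ over $\bigcup_{b \subsetneq a_k} F_b$. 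This yields an embedding $\tau_{a_k} : F_{a_k} \to K$ extending all the $\tau_b$, and the choice of $S$ will force the amalgamation to be non-forking over the base: $F_{a_k} \forkindep^{\ACF}_{B_k} K_{k-1}$, where $B_k$ denotes the algebraic closure in $K$ of the compositum of the $\tau_b(F_b)$. I would set $K_k$ to be the algebraic closure of $K_{k-1} \cdot \tau_{a_k}(F_{a_k})$, and at the end declare $F_n := K_m$.

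The main obstacle is verifying that the resulting $\mathcal{P}(n)$-system is independent: for each $b \in \mathcal{P}^-(n)$, the non-forking relation $F_b \forkindep^{\ACF}_{\bigcup_{c \subsetneq b} F_c} \bigcup_{b \not\subseteq d \subsetneq n} F_d$ must hold inside $F_n$. Since each step performs a non-forking amalgamation by design, new independences are installed and the previously established ones persist under transitivity and base monotonicity of $\forkindep^{\ACF}$. Matching the step-by-step independences built into the construction with the required ones for the full $\mathcal{P}(n)$-system is a routine but nontrivial bookkeeping exercise, in which \cref{independent system linearly disjoint} and \cref{fact shelah} do the heavy field-theoretic lifting.
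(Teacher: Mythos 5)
Your construction is, in spirit, the same as the paper's: build the embeddings one at a time via \cref{extend map independent system}, always choosing the new transcendence-basis image $S$ to be algebraically independent over everything already placed. (The paper fixes the target $F_n$ in advance, picks all the $S_a$ simultaneously, and enumerates by cardinality rather than by an arbitrary inclusion-compatible order, but those differences are cosmetic.) The place where your write-up actually has a hole is exactly the step you flag as the "main obstacle," namely the verification that the completed diagram is an independent $\mathcal{P}(n)$-system.

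Your proposed justification — that the step-wise non-forking facts "persist under transitivity and base monotonicity," with \cref{independent system linearly disjoint} and \cref{fact shelah} doing "the heavy field-theoretic lifting" — does not go through as stated. What step $k$ gives you is $F_{a_k}\forkindep^{\ACF}_{B_k} K_{k-1}$, whose base $B_k$ and right-hand side $K_{k-1}$ are tied to the enumeration, not to the index sets occurring in the definition of an independent $\mathcal{P}(n)$-system. For a given $b\subsetneq n$ you need $F_b\forkindep^{\ACF}_{\bigcup_{c\subsetneq b}F_c}\bigcup_{b\not\subseteq d\subsetneq n}F_d$, and the sets $d$ in that union are scattered on both sides of $b$ in your enumeration; there is no visible chain of transitivity/base-monotonicity moves assembling this from the step-wise facts. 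Moreover, \cref{independent system linearly disjoint} and \cref{fact shelah} take an \emph{already} independent $\mathcal{P}(n)$-system as a hypothesis, so invoking them to certify that the constructed system is independent would be circular: they earn their keep inside \cref{extend map independent system}, at each step, but they cannot close the loop at the end.

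The missing ingredient is what the paper makes explicit: keep hold of the images $S_a=\tau_{a,n}(T_a)$ and observe (i) the family $\set{S_a}_{\emptyset\subsetneq a\subsetneq n}$ is pairwise disjoint and jointly algebraically independent over $F_\emptyset$ — which your requirement that $S$ be algebraically independent over $K_{k-1}$ does in fact secure — and (ii) the image of each $F_a$ in $F_n$ equals $\overline{F_\emptyset\bigl(\bigcup_{\emptyset\subsetneq c\subseteq a}S_c\bigr)}$. With this coordinate description the required independences collapse to one line: $F_b\forkindep^{\ACF}_{\bigcup_{c\subsetneq b}F_c}\bigcup_{b\not\subseteq d\subsetneq n}F_d$ becomes algebraic independence of $\bigcup_{c\subseteq b}S_c$ from $\bigcup_{b\not\subseteq c}S_c$ over $\bigcup_{c\subsetneq b}S_c$, which is immediate since $\set{c:c\subseteq b}\cap\set{c:b\not\subseteq c}=\set{c:c\subsetneq b}$ and the $S_c$ are jointly algebraically independent over $F_\emptyset$. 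Add this observation and your argument closes; without it, the final paragraph is a genuine gap rather than routine bookkeeping.
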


\begin{proof}
    Let $F=\set{F_a}_{a\in \mathcal{P}^-(n)}$ be an independent $\mathcal{P}^-(n)$-system of ACF with embeddings $\tau_{b,a}:F_b\to F_a$ for $b\subseteq a$.
    For every $\emptyset\subsetneq a\subsetneq n$, let $T_a$ be a transcendence basis of $F_a$ over $\bigcup_{b\subsetneq a} \tau_{b,a}(F_b)$.
    By induction on $\abs{a}$, it follows that
    $$F_a=\overline{\tau_{\emptyset,a}(F_\emptyset)(\bigcup_{\emptyset\subsetneq b\subseteq a} \tau_{b,a}(T_b))}.$$
    
    Let $F_n$ be some algebraically closed field extension of $F_\emptyset$, with a large enough transcendence degree over $F_\emptyset$.
    Let $\set{S_a}_{\emptyset\subsetneq a\subsetneq n}$ be some disjoint family of subsets of $F_n$ such that $\abs{S_a}=\abs{T_a}$ and $\bigcup_{\emptyset\subsetneq a\subsetneq n} S_a$ is algebraically independent over $F_\emptyset$. 
    We will extend $F$ to a $\mathcal{P}(n)$-system by defining embeddings $\tau_{a,n}:F_a\to F_n$ for all $a\subsetneq n$. 
    The embeddings $\tau_{a,n}$ will be built by induction on $\abs{a}$.
    
    For $a=\emptyset$, define  $\tau_{\emptyset,n}:F_\emptyset\to F_n$ to be the inclusion map.
    For $a\ne \emptyset$, suppose we built $\tau_{b,n}$ for every $b\subsetneq a$.
    Consider $\set{\tau_{b,a}(F_b)}_{b\subseteq a}$ as an independent $\mathcal{P}(a)$-system (where the embeddings are subset-inclusions).
    By \cref{extend map independent system}, there exist an embedding $\tau_{a,n}:F_a\to F_n$ such that $\tau_{a,n}\circ \tau_{b,a}=\tau_{b,n}$ for $b\subsetneq a$ and $\tau_{a,n}(T_a)=S_a$. 
    
    This completes $F$ to a $\mathcal{P}(n)$-system, it remains to prove independence.
    Consider all $\set{F_a}_{a\subsetneq n}$ as subsets of $F_n$ by taking their image under $\tau_{a,n}$.
    Notice that by the way we defined $\tau_{a,n}$ (specifically, because $\tau_{\emptyset,n}(F_\emptyset)=F_\emptyset$ and $\tau_{a,n}(T_a)=S_a$), we have that after taking the image under $\tau_{a,n}$
    $$F_a = \overline{F_\emptyset (\bigcup_{\emptyset\subsetneq b\subseteq a}S_b)}.$$
    We need to prove that for every $a\subseteq n$
    $$F_a\forkindep_{\bigcup_{b\subsetneq a} F_b}^\ACF\bigcup_{a\not\subseteq c\subseteq n} F_c,$$
    which is the same, up to taking algebraic closures, as 
    $$F_\emptyset (\bigcup_{\emptyset\subsetneq d\subseteq a}S_d)\forkindep_{F_\emptyset (\bigcup_{\emptyset\subsetneq b\subsetneq a} S_b)}^\ACF F_\emptyset (\bigcup_{ a\not\subseteq c\subseteq n} S_c).$$
    This follows from the fact that
    $S_a$ is algebraically independent over $F_\emptyset (\bigcup_{ a\not\subseteq c\subseteq n} S_c)$.
\end{proof}

\iftoggle{THESIS}{
\section{The independence theorem}
}{
\subsection{The independence theorem}
}

It is a well known fact in the folklore that the independence theorem is equivalent to $3$-amalgamation.
In our case there are two differences, the definition of $3$-amalgamation is different and we work in the category of existentially closed models.
We reprove this equivalence in our setting.

\begin{proposition} \label{equivalence 3 amalgamation}
    Let $\M$ be a monster model of an inductive theory $T$ with JEP.
    Suppose that there is an ternary relation $\forkindep$ on subsets of $\M$ satisfying invariance, existence, monotonicity, symmetry, and extension.
    For $M\in \EC(T)$, the following are equivalent:
    \begin{enumerate}
        \item (3-amalgamation) every independent $\mathcal{P}^-(3)$-system of $\EC(T)$ over $M$ can be completed to an independent $\mathcal{P}(3)$-system of $\EC(T)$ (a system $F$ is over $M$ if $F_\emptyset=M$).
        \item (strengthened independence theorem) for tuples $c_1,c_2,b_1,b_2$ such that $c_1\forkindep_M c_2$, $b_1\forkindep_M c_1$, $b_2\forkindep_M c_2$ and $b_1\equiv^\exists_M b_2$, there exists $b$ such that $b\equiv^\exists_{Mc_1} b_1$, $b\equiv^\exists_{Mc_2} b_2$, and $b\forkindep_M c_1c_2$, $bc_1\forkindep_M c_2$, $bc_2\forkindep_M c_1$.
    \end{enumerate}
\end{proposition}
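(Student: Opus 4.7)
My plan is to prove the two implications separately, working throughout inside the monster $\M$ and using strong homogeneity freely.

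For (1) $\Rightarrow$ (2), I will encode the data $(c_1, c_2, b_1, b_2)$ as an independent $\mathcal{P}^-(3)$-system and read off $b$ from its 3-amalgamation. Pick $\sigma \in \Aut(\M/M)$ sending $b_1$ to $b_2$; choose existentially closed models $F_{\{0\}} \supseteq Mb_1$, $F_{\{1\}} \supseteq Mc_1$, $F_{\{2\}} \supseteq Mc_2$ in $\M$, and set $F_{\{0\}}' := \sigma(F_{\{0\}}) \supseteq Mb_2$. Starting from the given pairwise independences and using extension, monotonicity, and invariance (moving things by automorphisms fixing $M$ as needed), arrange $F_{\{0\}} \forkindep_M F_{\{1\}}$, $F_{\{0\}}' \forkindep_M F_{\{2\}}$, $F_{\{1\}} \forkindep_M F_{\{2\}}$. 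Pick e.c.\ extensions $F_{\{0,1\}} \supseteq F_{\{0\}} F_{\{1\}}$, $F_{\{0,2\}} \supseteq F_{\{0\}}' F_{\{2\}}$ (with the embedding $F_{\{0\}} \to F_{\{0,2\}}$ given by $\sigma|_{F_{\{0\}}}$), and $F_{\{1,2\}} \supseteq F_{\{1\}} F_{\{2\}}$. This assembles an independent $\mathcal{P}^-(3)$-system over $M$, since the only nontrivial condition is the pairwise independence of the singletons. Applying 3-amalgamation yields a completion $F_{\{0,1,2\}}$, and the image of $b_1$ in $F_{\{0,1,2\}}$ via $F_{\{0\}} \to F_{\{0,1\}} \to F_{\{0,1,2\}}$ is the required $b$: the two type equalities follow from embeddings preserving existential formulas (with $\sigma(b_1) = b_2$ handling the $c_2$ side through $F_{\{0,2\}}$), and the three independences follow from the independence condition of the $\mathcal{P}(3)$-system at the singleton indices, combined with monotonicity and symmetry.

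For (2) $\Rightarrow$ (1), I will use the strengthened independence theorem as the tool for gluing the third face $F_{\{1,2\}}$ onto the amalgamation of $F_{\{0,1\}}$ and $F_{\{0,2\}}$ over $F_{\{0\}}$. Realize the given $\mathcal{P}^-(3)$-system in $\M$ via compatible embeddings and set $c_1 := F_{\{1\}}$, $c_2 := F_{\{2\}}$, so that $c_1 \forkindep_M c_2$ by the system's pairwise independence. Choose tuples $b_1, b_2$ enumerating the ``extra'' parts of $F_{\{0,1\}}$ over $c_1$ and of $F_{\{0,2\}}$ over $c_2$ (containing their respective $F_{\{0\}}$-components and the rest), so that $b_i \forkindep_M c_i$ and, by careful choice of enumeration together with strong homogeneity to align their $F_{\{0\}}$-components, $b_1 \equiv^\exists_M b_2$. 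Applying the strengthened independence theorem produces $b$ with the two type equalities over $Mc_i$ and the three independences. An e.c.\ model $F_{\{0,1,2\}}$ containing $b, c_1, c_2$ and a moved copy of $F_{\{1,2\}}$ (matching $c_1 c_2$ via the embedded images of $F_{\{1\}}, F_{\{2\}}$) is then the desired completion; the embeddings of the three doubletons into $F_{\{0,1,2\}}$ come from the type equalities, and the full independence of the completed $\mathcal{P}(3)$-system follows from the three conclusions of the theorem together with the original $\mathcal{P}^-(3)$-system independences and monotonicity.

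The harder direction is (2) $\Rightarrow$ (1), and the main obstacle is arranging the premise $b_1 \equiv^\exists_M b_2$ for the strengthened independence theorem. This requires enumerating the two doubletons $F_{\{0,1\}}$ and $F_{\{0,2\}}$ in a coherent way, both sitting above the common e.c.\ model $F_{\{0\}}$; one natural approach is to first form their disjoint amalgamation over $F_{\{0\}}$ (using that e.c.\ models are disjoint amalgamation bases) and then apply the theorem to the relative position of $F_{\{1,2\}}$ inside this amalgamation, extracting the needed tuples from the merged configuration rather than directly from the doubletons.
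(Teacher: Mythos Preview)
Your plan for $(1)\Rightarrow(2)$ is essentially the paper's: build a $\mathcal{P}^-(3)$-system whose singletons are e.c.\ models containing $Mb_1$, $Mc_1$, $Mc_2$ (with the two embeddings of the $b$-model differing by the automorphism $b_1\mapsto b_2$), complete it, and read off $b$. The bookkeeping about which copies of $c_1,c_2$ survive in the completion is handled in the paper by forcing one of the three top embeddings to be the identity; you should say a word about this, but there is no real difficulty.

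For $(2)\Rightarrow(1)$ there is a genuine gap. You set $c_1=F_{\{1\}}$, $c_2=F_{\{2\}}$ and then take $b_1,b_2$ to enumerate ``the $F_{\{0\}}$-component \emph{and the rest}'' of $F_{\{0,1\}}$, $F_{\{0,2\}}$. This cannot work: the doubletons $F_{\{0,1\}}$ and $F_{\{0,2\}}$ are arbitrary e.c.\ models of the system, with no reason to be isomorphic over $M$, so $b_1\equiv^\exists_M b_2$ will fail as soon as the $b_i$ record anything beyond the image of $F_{\{0\}}$. Moreover, the $\mathcal{P}^-(3)$-system gives you only $F_{\{0\}}\forkindep_M F_{\{i\}}$ inside $F_{\{0,i\}}$; it says nothing about elements of $F_{\{0,i\}}$ outside $F_{\{0\}}\cup F_{\{i\}}$, so $b_i\forkindep_M c_i$ is also unjustified for your choice of $b_i$. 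Your alternative suggestion (amalgamate $F_{\{0,1\}},F_{\{0,2\}}$ over $F_{\{0\}}$ first) does not resolve this: you still need $F_{\{1\}}\forkindep_M F_{\{2\}}$ in the amalgam to feed into the independence theorem, and a mere disjoint amalgamation does not give that.

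The paper's move is simpler: take the $b_i$ to be \emph{only} the two images of the singleton (in your indexing, $b_1$ is the copy of $F_{\{0\}}$ inside $F_{\{0,1\}}$ and $b_2$ is its copy inside $F_{\{0,2\}}$). Then $b_1\equiv^\exists_M b_2$ is automatic, and $b_i\forkindep_M c_i$ is exactly the singleton-level independence of the system. After applying the strengthened independence theorem, the doubletons $F_{\{0,1\}}$ and $F_{\{0,2\}}$ are embedded via the automorphisms witnessing $b\equiv^\exists_{Mc_i} b_i$, not encoded into the $b_i$ themselves. Finally, note that the three independences coming out of the theorem are not enough for full $\mathcal{P}(3)$-independence; the paper uses extension several more times (moving the embeddings of the doubletons over appropriate bases) to secure conditions like $F_{\{0,1\}}\forkindep_{F_{\{0\}}F_{\{1\}}} F_{\{0,2\}}F_{\{1,2\}}$, so your claim that monotonicity alone suffices at the end is too optimistic.
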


\begin{proof}
    $(1)\implies (2)$:
    We can find existentially closed models $C_1,C_2\in \EC(T)$ such that $Mc_i\subseteq C_i$ ($i=1,2$) and $C_1\forkindep_M C_2$ --- start with some $Mc_1\subseteq C_1\in \EC(T)$, and using extension and invariance move it by an automorphism fixing $Mc_1$ so that $C_1\forkindep_M c_2$, then do the same with some $Mc_2\subseteq C_2\in \EC(T)$.
    By extension, we can find $b_i'\equiv^\exists_{Mc_i} b_i$ ($i=1,2$) such that $b_i'\forkindep_M C_i$.
    Note that
    $$b_1'\equiv^\exists_M b_1 \equiv^\exists_M b_2 \equiv^\exists_M b_2'.$$
    We can find existentially closed models $Mb_i'\subseteq B_i\in \EC(T)$ such that $B_1\equiv^\exists_M B_2$ and $B_i\forkindep_M C_i$ ($i=1,2$) --- start with some $Mb_1'\subseteq B_1\in \EC(T)$, as before use extension and invariance to assume $B_1\forkindep_M C_1 $, then let $B_2$ be the image of $B_1$ under an automorphism given by $b_1'\equiv^\exists_M b_2'$, by extension and invariance we can move $B_2$ by an automorphism fixing $Mb_2'$ such that $B_2\forkindep_M C_2$. 
    
    Let $N_0, N_1, N_2\subseteq \M$ be some existentially closed models such that $C_1, C_2\subseteq N_0$, $C_1,B_1\subseteq N_1$ and $C_2,B_2\subseteq N_2$, and consider the $\mathcal{P}^-(3)$-system
    \[
    \begin{tikzcd}
    N_0                   & N_1                           & N_2                              \\
    C_1 \arrow[u] \arrow[ru] & C_2 \arrow[lu] \arrow[ru]        & B_1 \arrow[lu] \arrow[u, "\sigma"'] \\
                              & M \arrow[lu] \arrow[u] \arrow[ru] &                               
    \end{tikzcd}
    \]
    where all the arrows are inclusions, except for $\sigma$ which maps $B_1$ to $B_2$, fixing $M$.
    The above system is independent, so it can be completed to an independent $\mathcal{P}(3)$-system
    \[
    \begin{tikzcd}
                              & N                                 &                                      \\
    N_0 \arrow[ru, "\tau_0"]        & N_1 \arrow[u, "\tau_1"']      & N_2 \arrow[lu, "\tau_2"']        \\
    C_1 \arrow[u] \arrow[ru] & C_2 \arrow[lu] \arrow[ru]        & B \arrow[lu] \arrow[u, "\sigma"'] \\
                              & M \arrow[lu] \arrow[u] \arrow[ru] &                                     
    \end{tikzcd}
    \]
    We can expand $N$ to the monster $\M$, and by \cref{HK strong homogeneity} we can expand $\tau_0,\tau_1,\tau_2$ to automorphisms of $\M$.
    By applying $\tau_0^{-1}$ to $\M$, we can assume that $\tau_0$ is the identity
    \[
    \begin{tikzcd}
                              & \M                                 &                                      \\
    N_0 \arrow[ru]        & N_1 \arrow[u, "\tau_1"']      & N_2 \arrow[lu, "\tau_2"']        \\
    C_1 \arrow[u] \arrow[ru] & C_2 \arrow[lu] \arrow[ru]        & B \arrow[lu] \arrow[u, "\sigma"'] \\
                              & M \arrow[lu] \arrow[u] \arrow[ru] &                                     
    \end{tikzcd}
    \]
    Let $b=\tau_1(b_1')=\tau_2(b_2')$.
    By following the diagram, we see that $\tau_1$ fixes $Mc_1$ and $\tau_2$ fixes $Mc_2$, so 
    $$b\equiv^\exists_{Mc_1} b_1'\equiv^\exists_{Mc_1}b_1,$$
    $$b\equiv^\exists_{Mc_2} b_2'\equiv^\exists_{Mc_2}b_2.$$
    The independences we need to show follow from the fact that the system is independent (using monotonicity).
    \\
    
    $(2)\implies (1)$:
    For the other direction, let $F$ be an independent $\mathcal{P}^-(3)$-system over $M$. 
    We will show that all but one of the embeddings can be assumed to be inclusions:
    \[
    \begin{tikzcd}
    F_{\set{01}}                   & F_{\set{02}}                                      & F_{\set{12}}                              \\
    F_{\set{0}} \arrow[u] \arrow[ru] & F_{\set{1}} \arrow[lu] \arrow[ru]                   & F_{\set{2}} \arrow[lu] \arrow[u, "\sigma"'] \\
                             & F_\emptyset \arrow[lu] \arrow[u] \arrow[ru] &                                    
    \end{tikzcd}
    \]
    Start by replacing $F_\emptyset,F_{\set{0}},F_{\set{1}}$ with their images in $F_{\set{01}}$.
    Now move $F_{\set{02}}$ so that the embedding $F_{\set{0}}\to F_{\set{02}}$ would be an inclusion  (the system stays independent by invariance), and replace $F_{\set{2}}$ with its image in $F_{\set{02}}$.
    Finally, move $F_{\set{12}}$ so that the embedding $F_{\set{1}}\to F_{\set{12}}$ would be an inclusion.
    We are left only with $F_{\set{2}}\xrightarrow{\sigma} F_{\set{12}}$, which we can't assume to be an inclusion.
    
    Recall that $M=F_\emptyset$, and consider $c_1=F_{\set{0}}$, $c_2=F_{\set{1}}$, $b_1=F_{\set{2}}$ and $b_2=\sigma(b_1)$ as tuples.
    The conditions for the independence theorem hold from the independent system, so there is some $b$ satisfying $b\equiv_{Mc_1} b_1$, $b\equiv_{Mc_2} b_2$, $b\forkindep_M c_1c_2$, $bc_1\forkindep_M c_2$ and $bc_2\forkindep_M c_1$.
    There are automorphisms $\tau_1,\tau_2$ such that $\tau_1:b_1c_1\mapsto bc_1$, $\tau_2:b_2c_2\mapsto bc_2$, so the following diagram commutes:
    \[
    \begin{tikzcd}
                             & \M                                     &                                     \\
    F_{\set{01}} \arrow[ru]        & F_{\set{02}} \arrow[u, "\tau_1"']                 & F_{\set{12}} \arrow[lu, "\tau_2"']        \\
    F_{\set{0}} \arrow[u] \arrow[ru] & F_{\set{1}} \arrow[lu] \arrow[ru]                   & F_{\set{2}} \arrow[lu] \arrow[u, "\sigma"'] \\
                             & F_\emptyset \arrow[lu] \arrow[u] \arrow[ru] &                                    
    \end{tikzcd}
    \]
    
    We have $b\forkindep_M c_1c_2$, so by extension, by possibly changing $b$ and thus also changing $\tau_1,\tau_2$ while fixing $Mc_1c_2$, we have $b\forkindep_M F_{\set{01}}$, which is $F_{\set{2}}\forkindep_{F_\emptyset} F_{\set{01}}$.
    We also know $bc_1\forkindep_M c_2$, so by extension, possibly changing $\tau_1$, we get $F_{\set{02}}\forkindep_{F_\emptyset} F_{\set{1}}$. 
    Similarly, we get $F_{\set{12}}\forkindep_{F_\emptyset} F_{\set{0}}$.
    Next, by existence, $F_{\set{0}}F_{\set{1}}\forkindep_{F_{\set{0}}F_{\set{1}}} F_{\set{02}}F_{\set{12}}$, so by extension and changing $F_{\set{01}}$ (really, its embedding into $\M$) we get that $F_{\set{01}}\forkindep_{F_{\set{0}}F_{\set{1}}} F_{\set{02}}F_{\set{12}}$.
    The same can be done with $F_{\set{02}}\forkindep_{F_{\set{0}}F_{\set{2}}} F_{\set{01}}F_{\set{12}}$ and $F_{\set{12}}\forkindep_{F_{\set{1}}F_{\set{2}}} F_{\set{01}}F_{\set{02}}$.
    Notice that the automorphisms we take preserve $F_{\set{0}}F_{\set{1}}F_{\set{2}}$, so they preserve the independences already established.
    This gives us an independent $\mathcal{P}(3)$-system that completes the given independent $\mathcal{P}^-(3)$-system.
\end{proof}

\bibliographystyle{alpha}
\bibliography{library}

\end{document}